\numberwithin{equation}{section}
\providecommand{\keywords}[1]
{
  \small	
  \textit{Keywords:} #1
}
\providecommand{\MSCcodes}[1]
{
  \small	
  \textbf{MSC Codes:} #1
}
\newcommand\blfootnote[1]{
    \begingroup
    \renewcommand\thefootnote{}\footnote{#1}
    \addtocounter{footnote}{-1}
    \endgroup
}
\title{A Parallel, Energy-Stable Low-Rank Integrator for Nonlinear Multi-Scale Thermal Radiative Transfer}
\author[1]{Chinmay Patwardhan\thanks{Corresponding author}}
\author[2]{Jonas Kusch}
\affil[1]{Karlsruhe Institute of Technology, Germany}
\affil[2]{Norwegian University of Life Sciences, Norway}
\date{\vspace{-2em}}
\newcommand{\R}{\mathbb{R}} 
\newcommand{\N}{\mathbb{N}} 
\newcommand{\norm}[1]{\left\lVert #1 \right\rVert} 
\newcommand{\abs}[1]{\left\lvert #1 \right\rvert} 
\newcommand{\vart}{t} 
\newcommand{\Ident}{\mathcal{I}} 
\newcommand{\Nx}{N_{x}} 
\newtheorem{lemma}{Lemma}
\newtheorem{theorem}{Theorem}
\newtheorem{property}{Property}
\theoremstyle{remark}
\newtheorem{remark}{Remark}
\newcommand{\hindexp}[2]{{#1}^{+}_{\sfrac{#2}{2}}}
\newcommand{\hindexm}[2]{{#1}^{-}_{\sfrac{#2}{2}}}
\newcommand{\Deltaxy}{\Delta \zeta}
\begin{document}

\maketitle
\begin{abstract}
    Thermal radiative transfer models physical phenomena ranging from supernovas in astrophysics to radiation from a hohlraum striking a fusion target in plasma physics. Transport and absorption of particles in radiative transfer at different rates lead to a complex interaction between the material and particles that involves highly varying time scales. Resolving these effects can require prohibitively small step sizes, which, combined with nonlinear effects and the particle density's high-dimensional phase space, render conventional numerical methods computationally expensive. This work presents an asymptotic--preserving, mass conservative, rank-adaptive, and parallel integrator for a macro--micro decomposition-based dynamical low-rank approximation of the thermal radiative transfer equations. The proposed integrator efficiently incorporates reflection-transmission type boundary conditions in the low-rank factors. It captures the nonlinear effects of thermal radiation and is energy stable with the step size restriction capturing both hyperbolic and parabolic CFL conditions. The efficacy of the proposed integrator is demonstrated with numerical experiments.
\end{abstract}

\keywords{thermal radiative transfer, nonlinear energy stability, asymptotic--preserving, dynamical low-rank approximation, parallel BUG}

\MSCcodes{ 35L65, 35Q49, 65M12, 35B40}

\blfootnote{Chinmay Patwardhan: chinmay.patwardhan@kit.edu}
\blfootnote{Jonas Kusch: jonas.kusch@nmbu.no}
  \section{Introduction}\label{section:Introduction}
The field of thermal radiative transfer models the interaction of particles traveling through and interacting with a background material. Physical phenomena governed by the thermal radiative transfer equations include star formation, supernova explosions, radiation from a hohlraum striking a fusion target, and laser wakefield acceleration driven by pressure waves. To numerically simulate such problems, particles are commonly described by a phase space density $f(t,\textbf{\textit{x}},\boldsymbol{\Omega})$ where $t$ is time, $\textbf{\textit{x}}\in\R^{3}$ is the spatial position, and $\boldsymbol{\Omega}\in\mathbb{S}^{2}$ is the direction of flight. Then, the number of particles at time $t$ with spatial position in $\mathrm{d}\textbf{\textit{x}}$ around $\textbf{\textit{x}}$ and direction of travel in $\mathrm{d}\boldsymbol{\Omega}$ around $\boldsymbol{\Omega}$ is given as $f(t,\textbf{\textit{x}},\boldsymbol{\Omega})\mathrm{d}\textbf{\textit{x}}\mathrm{d}\boldsymbol{\Omega}$.

Two central challenges exist in determining the phase space density: First, the phase space is six-dimensional, which poses a challenge to store and evolve the phase space density on a finely resolved computational grid. Second, the underlying dynamics are commonly governed on strongly varying time scales and numerical methods must be designed to accurately capture all essential solution characteristics while not having to resolve prohibitively small scales. In particular, if many particles are absorbed on a small time scale, the dynamics of the thermal radiative transfer equations asymptotically converge to a diffusive nonlinear partial differential equation called the Rosseland equation~\cite{Rosseland_approx_ref}. Numerical methods that capture a discrete analog of this behavior while not requiring the resolution of prohibitively small time scales are often called asymptotic--preserving (AP). For instance, AP unified gas kinetic scheme (UGKS)~\cite{MR3306333,MR3404516,MR4673300,MR3713434}, the high-order/low-order (HOLO) scheme~\cite{MR4316005}, and high-order IMEX schemes~\cite{MR4439880,MR3248034} are examples of AP schemes for the thermal radiative transfer equations.

To address the high-dimensionality of thermal radiative transfer problems, recently developed numerical methods employ dynamical low-rank approximations (DLRA) introduced in \cite{doi:10.1137/050639703}. The main idea of DLRA is to represent the solution as a low-rank factorization and then derive evolution equations for the low-rank factors such that the full-rank dynamics is captured as closely as possible. While DLRA can significantly reduce memory and computational requirements, it has several additional challenges. Most importantly, the evolution equations for low-rank factors are ill-conditioned, and a large amount of research has been devoted to constructing numerical time integrators that are robust to stiffness. The most frequently used robust integrators are the projector--splitting integrator \cite{doi:10.1007/s10543-013-0454-0}, and basis-update \& Galerkin (BUG) integrators \cite{doi:10.1007/s10543-021-00873-0, robustBUG,ceruti2023parallel}, which allow for an extension to higher order \cite{ceruti2024robust,kusch2024secondorder}. Besides the construction of robust integrators, numerical methods for DLRA are commonly required to preserve certain characteristics of the original problem, e.g., the preservation of local conservation laws \cite{EINKEMMER2023112060,einkemmer2023conservation,koellermeier2023macromicro,coughlin2024robust,baumann2023energy,doi:10.1137/24M1646303}, asymptotic limits \cite{ding2021dynamical,MR4253315,einkemmer2022asymptoticpreserving,doi:10.1137/24M1646303}, or stability regions \cite{MR4535339,einkemmer2022asymptoticpreserving,baumann2023energy,doi:10.1137/24M1646303}. Such structure--preserving properties are often problem-dependent and require a careful modification of the standard integrators for individual applications. Here, the augmented BUG integrator \cite{doi:10.1007/s10543-021-00873-0} has proven to be beneficial since it allows for increased flexibility to incorporate solution structures while, unlike the projector--splitting integrators, not requiring steps backward in time. The augmented BUG integrator simplifies the construction of structure--preserving low-rank integrators, however, it requires additional costs compared to the parallel BUG integrator \cite{ceruti2023parallel}, which evolves all low-rank factors in parallel while not requiring a coefficient update at an increased rank.

Several structure--preserving DLRA methods have been developed for the thermal radiative transfer equations. A DLRA scheme to solve the one-dimensional thermal radiative transfer equation has been proposed in \cite{ceruti2022dynamical}; however, without stability guarantees, it requires a parameter study to determine a sufficiently small time step size. In \cite{baumann2023energy}, a provable energy-stable and locally conservative DLRA scheme has been derived in the one-dimensional setting for the Su-Olson closure~\cite{SU19971035}. Though not physically motivated, the Su-Olson closure significantly simplifies the evolution equation as it eliminates nonlinear effects in the coupling of the material and particles. Moreover, in \cite{doi:10.1137/24M1646303}, the authors propose a DLRA method that is asymptotic--preserving, locally conservative, and energy--stable in a one-dimensional and linearized setting.

It is important to note that the stability of previously derived methods is only understood in simplified settings, using linearizations or simplified closure relations. Moreover, while the methods proposed in \cite{baumann2023energy,doi:10.1137/24M1646303} allow for rank-adaptivity, their construction as augmented BUG integrators leads to increased computational costs since the time evolution of low-rank coefficients requires a sequential time update at an increased rank. An additional challenge for DLRA schemes not addressed in \cite{baumann2023energy,doi:10.1137/24M1646303} is the efficient implementation of boundary conditions for complex two- and three-dimensional geometries. Due to the nonlinear ansatz which separates the basis in each phase space variable, describing boundary conditions for low-rank schemes is not straightforward. A few techniques for incorporating boundary conditions in the DLRA scheme have been proposed in \cite{MR4728768,MR4466549,sapsis2009dynamically}. However, since the dynamics of the parallel BUG integrator are completely determined by the initial conditions the described methods are not applicable to the parallel BUG integrator. This work aims to develop efficient structure-preserving DLRA methods for thermal radiative transfer that can be used in higher-dimensional settings. The main novelties are
\begin{itemize}
	\item \textit{A new asymptotic--preserving, mass conservative, parallel, and rank-adaptive BUG integrator for thermal radiative transfer}: We construct a novel DLRA scheme based on a nodal micro-macro decomposition of the parallel integrator in two- and three-dimensional setting. The resulting integrator simultaneously evolves all low-rank factors while not requiring an expensive sub-step at twice the current rank. The integrator captures the correct asymptotic limit and is mass conservative.
	\item \textit{nonlinear stability analysis of the full-rank macro--micro scheme as well as the macro--micro parallel BUG scheme}: We propose numerical discretizations for the full-rank and low-rank evolution equations and show that both the proposed schemes are energy stable, under mixed hyperbolic and parabolic CFL conditions, for a nonlinear closure of the thermal radiative transfer equations given by the Stefan-Boltzmann law.
    \item \textit{Efficient implementation of reflection-transmission type boundary conditions for the parallel BUG integrator}: We propose an efficient method of incorporating reflection-transmission type boundary conditions in the parallel BUG integrator for the thermal radiative transfer equations.
\end{itemize}

The rest of the paper is organized into four sections. Section~\ref{section:Background} introduces the thermal radiative transfer equations, the macro--micro decomposition, and the fundamentals of dynamical low-rank approximation and the parallel BUG integrator. Section~\ref{section:EnergyStableNMM} presents an asymptotic--preserving discretization for the macro--micro equations using discrete ordinates, a first-order upwind discretization on staggered grids, and an implicit-explicit (IMEX) time-stepping scheme. We show that the proposed full-rank scheme for the nonlinear closure given by the Stefan-Boltzmann law is energy stable under a mixed hyperbolic and parabolic CFL condition. Section~\ref{section:ParallelDLRAforNMM} proposes a computationally inexpensive and memory-efficient low-rank scheme based on the parallel BUG integrator for the macro--micro equations. It also describes an efficient algorithm for incorporating reflection-transmission type boundary conditions in the parallel BUG integrator. This scheme is shown to be asymptotic preserving and energy stable for the nonlinear closure, given by the Stefan-Boltzmann law, under the same CFL condition as the full-rank scheme. Finally, Section~\ref{section:Numericalresults} presents numerical experiments for Gaussian, Marshak wave, and hohlraum test cases. 

\section{Background}\label{section:Background}

\subsection{Thermal radiative transfer equations}

The gray thermal radiative transfer equations model the interaction of radiation particles with the background material through the interplay of radiation and material heat. In dimensionless form, they read
\begin{subequations}\label{eq:RTE}
\begin{align}
    \frac{\varepsilon^{2}}{c}\partial_{t}f + \varepsilon~\boldsymbol{\Omega}\cdot\nabla_{\textit{\textbf{x}}}f &= \sigma^{a}(B(T) - f) + \sigma^{s}(\phi - f),\label{eq:RTEKin}\\
        \varepsilon^{2}c_{\nu}\partial_{t}T &= \int_{\mathbb{S}^{2}}\sigma^{a}(f- B(T))~\mathrm{d}\boldsymbol{\Omega}.\label{eq:RTETemp} 
\end{align}
\end{subequations}
Here, $f(t,\textbf{\textit{x}},\boldsymbol{\Omega})$ describes the particle density at time $t\in\R_{\geq0}$, position $\textbf{\textit{x}}= (x,y,z)\in \mathcal{D}\subset\R^{3}$ and direction of flight $\boldsymbol{\Omega}=(\Omega_{x},\Omega_{y},\Omega_{v})\in\mathbb{S}^{2}$. The material temperature, denoted by $T(t,\textbf{\textit{x}})$, varies in time and space and the specific heat of the material is denoted by $c_{\nu}$. $\varepsilon$, known as the Knudsen number, specifies the ratio of the mean free path of particles between collisions to the relevant spatial scale. Since most particles fly through the material without any interaction we specify the probability of the different types of interaction. The two main particle-material interactions are absorption and scattering, and their probabilities are given by $\sigma^{a}(\textbf{\textit{x}})$ and $\sigma^{s}(\textbf{\textit{x}})$, respectively. We define the total cross-section as $\sigma^{t}(\textbf{\textit{x}}) = \sigma^{a}(\textbf{\textit{x}}) + \sigma^{s}(\textbf{\textit{x}})$ and assume that $\sigma^{a}(\textbf{\textit{x}})\geq\sigma^{a}_{0}>0$ and $\sigma^{t}(\textbf{\textit{x}}) \geq \sigma^{t}_{0} > 0 $.  We introduce the short-hand notation $ \langle \cdot \rangle = \int_{\mathbb{S}^{2}}\cdot\hspace{1mm}\mathrm{d}\boldsymbol{\Omega} $ to denote the integration over the unit sphere $\mathbb{S}^{2} $. Then, the scalar flux, $\phi$, is defined as $\phi(t,\textbf{\textit{x}}) \coloneqq  \frac{1}{4\pi}\langle f(t,\textbf{\textit{x}},\boldsymbol{\Omega})\rangle $. 

The absorption of particles by the material increases its temperature and, due to blackbody radiation, the material emits particles proportional to the fourth power of its current temperature. To be precise, the rate at which particles are emitted by the material, represented by $B(T)$, is given by the Stefan-Boltzmann law which reads 
\begin{displaymath}
    B(T) = \frac{ac}{4\pi}T^{4}
\end{displaymath}
where $a$ is the radiation constant and $c$ is the speed of light. 

To complete the description of the thermal radiative transfer equations we must specify $f$ and $T$ at the initial time $t_{0}\in\R_{\geq 0}$ and the boundaries of the domain. The initial conditions are given by
\begin{displaymath}
    f(t_{0},\textbf{\textit{x}},\boldsymbol{\Omega}) = f_{I}(\textbf{\textit{x}},\boldsymbol{\Omega}),
         \quad T(t_{0},\textbf{\textit{x}}) = T_{I}(\textbf{\textit{x}}), \quad \textbf{\textit{x}}\in\mathcal{D},\boldsymbol{\Omega}\in\mathbb{S}^{2}.
\end{displaymath}
Let $\partial\mathcal{D}$ denote the boundary of $\mathcal{D}$ and $\widehat{\textbf{\textit{x}}}\in\partial\mathcal{D}$ be a point on the boundary. If $\textbf{\textit{n}}(\widehat{\textbf{\textit{x}}})$ denotes the outward unit normal to the boundary at $\widehat{\textbf{\textit{x}}}$, the reflection-transmission type boundary conditions for the thermal radiative transfer equations are given by
\begin{subequations}
    \begin{equation}\label{eq:RTEBC_parden}
         f(t,\widehat{\textbf{\textit{x}}},\boldsymbol{\Omega}) = 
         \begin{cases}
             \rho(\textbf{\textit{n}}\cdot\boldsymbol{\Omega})f(t,\widehat{\textbf{\textit{x}}},\boldsymbol{\Omega}) + (1-\rho(\textbf{\textit{n}}\cdot\boldsymbol{\Omega}))f_{B}(\widehat{\textbf{\textit{x}}},\boldsymbol{\Omega}),  &\mathrm{if}~\textbf{\textit{n}}\cdot\boldsymbol{\Omega}<0\\
             f(t,\widehat{\textbf{\textit{x}}},\boldsymbol{\Omega}), &\mathrm{if}~\textbf{\textit{n}}\cdot\boldsymbol{\Omega}\geq0
         \end{cases},
    \end{equation}
    \begin{equation}
        T(t,\widehat{\textbf{\textit{x}}}) = T_{B}(\widehat{\textbf{\textit{x}}}),
    \end{equation}
\end{subequations}
where $f_{B}$ denotes the particle density transmitted into the domain from outside, $0\leq \rho \leq 1$ specifies reflectivity and $\boldsymbol{\Omega}' = \boldsymbol{\Omega} - 2\textbf{\textit{n}}(\textbf{\textit{n}}\cdot\boldsymbol{\Omega})$. Note that $\rho = 1$ corresponds to a purely reflective boundary while $\rho = 0$ corresponds to a transmission or inflow boundary condition.

Absorption of a large number of particles at small time scales is equivalent to $\varepsilon \to 0$. In this limiting case, $\varepsilon \to 0$, the particle density $f = B(T)$ and the evolution of the material temperature $T$ is given by the nonlinear diffusion-type equation known as the Rosseland equation \cite{Rosseland_approx_ref} which reads
\begin{equation}\label{eq:RosselandApprox}
    c_{\nu}\partial_{t}T + \frac{4\pi}{c}\partial_{t}B(T) = \frac{4\pi}{3}\nabla_{\textbf{\textit{x}}}\cdot\left( \frac{1}{\sigma^{t}}\nabla_{\textbf{\textit{x}}}B(T) \right).
\end{equation}

 \begin{remark}\label{remark:EquibBC}
      The initial and boundary conditions for the Rosseland equation \eqref{eq:RosselandApprox} are found by solving an initial and boundary layer problem. A detailed analysis can be found in \cite{MR1842224,KLAR_SIEDOW_1998}. However, treating boundary layer problems is out of the scope of this work and thus we adopt the so-called equilibrium boundary conditions proposed in \cite{MR1842224}. That is, we assume $(1-\rho)(f_{B}-B(T_{B})) = 0$ almost everywhere on $\partial\mathcal{D}\times\mathbb{S}^{2}_{-} = \{(\widehat{\textbf{\textit{x}}},\boldsymbol{\Omega}) \text{ s.t. } \widehat{\textbf{\textit{x}}}\in\partial\mathcal{D},~ \textbf{\textit{n}}(\widehat{\textbf{\textit{x}}})\cdot\boldsymbol{\Omega}<0 \} $. Then, if $\mathcal{D}\subset\R^{3}$ has a smooth boundary $\partial\mathcal{D}$ and we assume that the initial and the boundary data is bounded and sufficiently smooth on the domain, the solution pair $(f,T)$ converges as $\varepsilon \to 0$ to the solution of the Rosseland equation \cite{MR1842224}. The initial and boundary conditions for the Rosseland equation are then given by $T_{I}(\textbf{\textit{x}})$ and $T_{B}(\widehat{\textbf{\textit{x}}})$, respectively. The reader is referred to \cite{MR1842224} for further details. 
 \end{remark}

\subsubsection{Macro--micro decomposition}
The thermal radiative transfer equations involve effects varying at different time scales. Thus, to avoid mixing scales we use a macro-micro decomposition \cite{MR2460781} which decomposes the particle density into unscaled equilibrium variables and scaled non-equilibrium variables. Specifically, we use the macro--micro ansatz described in \cite{MR1842224} which has the form
\begin{equation}\label{eq:macromicrodecomp}
    f(t,\textbf{\textit{x}},\boldsymbol{\Omega}) = B(T(t,\textbf{\textit{x}})) + \varepsilon g(t,\textbf{\textit{x}},\boldsymbol{\Omega}) + \varepsilon^{2}h(t,\textbf{\textit{x}}),
\end{equation}
where $\langle g \rangle = 0$. Thus, the particle density is decomposed into its angular mean $\langle f\rangle = B(T) + \varepsilon^{2}h$, and a correction term $\varepsilon g$.

Substituting this macro--micro ansatz \eqref{eq:macromicrodecomp} in the thermal radiative transfer equations \eqref{eq:RTE} and using the condition $\langle g \rangle = 0$ yields the macro--micro equations
\begin{subequations}\label{eq:MacMicSys}
     \begin{equation}\label{eq:MacMicSysmic}
        \frac{\varepsilon^{2}}{c}\partial_{t}g + \varepsilon\left( \Ident - \frac{1}{4\pi}\langle{\cdot}\rangle \right)(\boldsymbol{\Omega}\cdot\nabla_{\textbf{\textit{x}}}g) + \boldsymbol{\Omega}\cdot\nabla_{\textbf{\textit{x}}}(B(T) + \varepsilon^{2}h) = -\sigma^{t}g,
    \end{equation}
    \begin{equation}\label{eq:MacMicSysmes}
         \frac{\varepsilon^{2}}{c}\partial_{t}h + \frac{1}{c}\partial_{t}B(T) + \frac{1}{4\pi}\langle \boldsymbol{\Omega}\cdot\nabla_{\textbf{\textit{x}}}g \rangle = -\sigma^{a}h,
    \end{equation}
    \begin{equation}\label{eq:MacMicSysmac}
        c_{\nu}\partial_{t}T = 4\pi\sigma^{a} h.
    \end{equation}
\end{subequations}
The macro--micro equations are equivalent to the thermal radiative transfer equation and have the same Rosseland diffusion limit. This can be seen by comparing $\mathcal{O}(1)$ terms in \eqref{eq:MacMicSys} which yield the Rosseland equation \eqref{eq:RosselandApprox} in the limit $\varepsilon\to 0$.

\paragraph{Initial and boundary conditions}

The initial and boundary conditions for the macro--micro equations have been derived and analyzed in \cite{MR1842224}. We present the relevant details here. The initial conditions for $g$ and $h$ can be derived from the following relations:
 \begin{displaymath} 
        g(t_{0},\textbf{\textit{x}},\boldsymbol{\Omega}) = \frac{1}{\varepsilon}\left( f_{I}(\textbf{\textit{x}},\boldsymbol{\Omega}) - \frac{1}{4\pi}\langle f_{I}(\textbf{\textit{x}},\boldsymbol{\Omega})\rangle \right), \quad h(t_{0},\textbf{\textit{x}}) = \frac{1}{\varepsilon^{2}}\left( \frac{1}{4\pi}\langle f_{I}(\textbf{\textit{x}},\boldsymbol{\Omega})\rangle - B(T_{I})(\textbf{\textit{x}}) \right).
\end{displaymath}
The boundary condition for $g$ is obtained by substituting the macro--micro ansatz \eqref{eq:macromicrodecomp} into the boundary conditions for $f$ given in~\eqref{eq:RTEBC_parden}. Thus, for $\widehat{\textbf{\textit{x}}}\in\partial\mathcal{D}$, the boundary condition for $g$ is given by:
\begin{displaymath}
    g(t,\widehat{\textbf{\textit{x}}},\boldsymbol{\Omega}) = \begin{cases}
         \rho g(t,\widehat{\textbf{\textit{x}}},\boldsymbol{\Omega}') + (1-\rho)\left( \frac{f_{B}(t,\widehat{\textbf{\textit{x}}},\boldsymbol{\Omega})- B(T_{B})(t,\widehat{\textbf{\textit{x}}})}{\varepsilon} - \epsilon h(t,\widehat{\textbf{\textit{x}}}) \right), &\mathrm{if}~\textbf{\textit{n}}\cdot\boldsymbol{\Omega}<0,\\
         g(t,\widehat{\textbf{\textit{x}}},\boldsymbol{\Omega}), &\mathrm{if}~\textbf{\textit{n}}\cdot\boldsymbol{\Omega}\geq 0.
    \end{cases}
\end{displaymath}
The boundary condition for $h$ is set such that the macro--micro decomposition is consistent at the boundaries, i.e., we compute $h$ such the $\langle g\rangle = 0$ is satisfied at the boundaries. Thus, $h$ must satisfy the following condition at the boundary:
\begin{displaymath}
    \int_{\textbf{\textit{n}}\cdot\boldsymbol{\Omega}>0}(1+\rho(-\textbf{\textit{n}}\cdot\boldsymbol{\Omega}))g~\mathrm{d}\boldsymbol{\Omega} + \int_{\textbf{\textit{n}}\cdot\boldsymbol{\Omega}<0}(1-\rho)\left( \frac{f_{B}- B(T_{B})}{\varepsilon} - \epsilon h \right)~\mathrm{d}\boldsymbol{\Omega} = 0, \quad \widehat{\textbf{\textit{x}}}\in\partial\mathcal{D},
\end{displaymath}
when the boundary is not purely reflective, i.e. $0\leq\rho<1$. For a purely reflective boundary, i.e., $\rho = 1$, the boundary conditions for $h$ are not required \cite{MR1842224}. Since we assume equilibrium boundaries the above boundary conditions for $g$ and $h$ simplify to
\begin{displaymath}
    \begin{aligned}
        g(t,\widehat{\textbf{\textit{x}}},\boldsymbol{\Omega}) &= \begin{cases}
            \rho g(t,\widehat{\textbf{\textit{x}}},\boldsymbol{\Omega}') - (1-\rho)\varepsilon h(t,\widehat{\textbf{\textit{x}}}),& \mathrm{if}~\textbf{\textit{n}}\cdot\boldsymbol{\Omega}<0,\\
            g(t,\widehat{\textbf{\textit{x}}},\boldsymbol{\Omega}'), &\mathrm{if}~\textbf{\textit{n}}\cdot\boldsymbol{\Omega}\geq0,
        \end{cases}\\
        \varepsilon\int_{\textbf{\textit{n}}\cdot\boldsymbol{\Omega}<0}(1-\rho)h ~\mathrm{d}\boldsymbol{\Omega} &= \int_{\textbf{\textit{n}}\cdot\boldsymbol{\Omega}>0}(1+\rho(-\textbf{\textit{n}}\cdot\boldsymbol{\Omega}))g~\mathrm{d}\boldsymbol{\Omega}.
    \end{aligned}
\end{displaymath}

\subsection{Dynamical low-rank approximation} \label{section:DLRA}
In this section, we present a summary of DLRA~\cite{doi:10.1137/050639703} for solving time-dependent problems on the manifold of low-rank matrices. The central idea is to evolve the solution on the low-rank manifold by projecting the dynamics onto the tangent space. Let $\textbf{g}(t) \in \R^{N_{\textbf{\textit{x}}}\times N_{q}} $ denote the matrix form of $g$ discretized in $\textbf{\textit{x}}$ and $\boldsymbol{\Omega}$ with $N_{\textbf{\textit{x}}}$ spatial cells and $N_{q}$ discrete directions. That is, $ (\textbf{g})_{ik} = g(t,\textbf{\textit{x}}_{i},\boldsymbol{\Omega}_{k})$. Then, \eqref{eq:MacMicSysmic} can be written as the matrix-valued differential equation
\begin{displaymath}
    \dot{\textbf{g}}(t) = \textbf{F}(t,\textbf{g}(t)).
\end{displaymath}
Let $\mathcal{M}_{r}$ denote the manifold of $N_{\textbf{\textit{x}}}\times N_{q}$ rank-$r$ matrices. Then, a low-rank approximation $\textbf{Y}(t)\in\mathcal{M}_{r}$ of $\textbf{g}(t)$ admits the factorization 
\begin{displaymath}
    \textbf{Y}(t) = \textbf{X}(t)\textbf{S}(t)\textbf{V}(t)^{\top},
\end{displaymath}
where $\textbf{X}(t)\in\R^{N_{\textbf{\textit{x}}}\times r}$, $\textbf{V}(t)\in\R^{N_{q}\times r}$ are orthonormal basis matrices and $\textbf{S}(t)\in\R^{r\times r}$ is the invertible coefficient matrix. Let $ \mathcal{T}_{\textbf{Y}}\mathcal{M}_{r} $ denote the tangent space to $\mathcal{M}_{r}$ at $\textbf{Y}$. Then, to ensure that $\textbf{Y}(t)\in\mathcal{M}_{r}$ approximates $\textbf{g}(t)$ we find $\dot{\textbf{Y}}(t)\in\mathcal{T}_{\textbf{Y}(t)}\mathcal{M}_{r}$ such that for all times $t$ the minimization problem 
\begin{displaymath}
    \underset{\dot{\textbf{Y}}(t)\in\mathcal{T}_{\textbf{Y}(t)}\mathcal{M}_{r}}{\mathrm{min}}\left\lVert \dot{\textbf{Y}}(t) - \textbf{F}(t,\textbf{Y}(t)) \right\rVert_{F}
\end{displaymath}
is satisfied with respect to the Frobenius norm $ \lVert \cdot \rVert_{F} $. Reformulating the minimization problem as a Galerkin condition on the tangent space, we see that the minimization problem is satisfied by
\begin{displaymath}\label{eq:Peq}
    \dot{\textbf{Y}}(t) = \textbf{P}(\textbf{Y}(t))\textbf{F}(t,\textbf{Y}(t)),
\end{displaymath}
where $\textbf{P}(\textbf{Y}(t))$ denotes the orthogonal projection onto the tangent space and has the form $\textbf{P}(\textbf{Y})\textbf{Z} = \textbf{X}\textbf{X}^{\top}\textbf{Z} - \textbf{X}\textbf{X}^{\top}\textbf{Z}\textbf{V}\textbf{V}^{\top} + \textbf{Z}\textbf{V}\textbf{V}^{\top}$~\cite[Lemma~4.1]{doi:10.1137/050639703}, for any $\textbf{Z}\in\R^{N_{\textbf{\textit{x}}}\times N_{q}}$. This allows us to derive differential equations for $\textbf{X}(t)$, $\textbf{S}(t)$, and $\textbf{V}(t)$. However, these equations are stiff in the case of rank over-approximation due to the presence of near-zero singular values \cite{doi:10.1137/050639703}.

In recent years, several structure--preserving integrators~\cite{doi:10.1007/s10543-013-0454-0,doi:10.1007/s10543-021-00873-0,ceruti2023parallel,ceruti2024robust,kusch2024secondorder} have been developed that are robust to this stiffness when solving the projected equation \eqref{eq:Peq}. In this work, we use the parallel BUG integrator~\cite{ceruti2023parallel} which evolves all the factors $\textbf{X}$, $\textbf{S}$, and $\textbf{V}$ in parallel resulting in a reduced number of potentially expensive projection operations while also allowing for rank-adaptivity. One step of the parallel BUG integrator for an initial rank-$r$ approximation $\textbf{Y}^{0} = \textbf{X}^{0}\textbf{S}^{0}\textbf{V}^{0,\top}$ at time $t_{0}$ updates the factors to $\textbf{X}^{1}$, $\textbf{S}^{1}$, $\textbf{V}^{1}$ of rank $r_{1}$ at $t_{1} = t_{0} + \Delta t$ in three steps:
\begin{enumerate}
    \item \textbf{Parallel update}: Update $\textbf{X}$, $\textbf{S}$, and $\textbf{V}$ in parallel and construct augmented basis matrices $\widehat{\textbf{X}} \in \R^{N_{\textbf{\textit{x}}}\times 2r}$ and $\widehat{\textbf{V}} \in \R^{N_{q}\times 2r}$:\\
    
    \textbf{K-step}: For $\textbf{K}(t_{0}) = \textbf{X}^{0}\textbf{S}^{0}$ solve from $t_{0}$ to $t_{1}$
    \begin{displaymath}
        \dot{\textbf{K}}(t) = \textbf{F}(t,\textbf{K}(t)\textbf{V}^{0,\top})\textbf{V}^{0}. 
    \end{displaymath}
    Determine $\widehat{\textbf{X}} = (\textbf{X}^{0}, \widetilde{\textbf{X}}^{1})\in \R^{N_{\textbf{\textit{x}}}\times 2r} $ as an orthonormal basis of $ (\textbf{X}^{0},\textbf{K}(t_{1})) $ (e.g., by QR decomposition). The $N_{\textbf{\textit{x}}}\times r$ matrix $\widetilde{\textbf{X}}^{1}$ is filled with zero columns if $\mathrm{rank}(\textbf{X}^{0},\textbf{K}(t_{1}))< r$. Compute and store the matrix $\widetilde{\textbf{S}}^{K}_{1} = \widetilde{\textbf{X}}^{1,\top}\textbf{K}(t_{1}) $.\\
    \textbf{L-step}: For $\textbf{L}(t_{0}) = \textbf{V}^{0}\textbf{S}^{0,\top}$ solve from $t_{0}$ to $t_{1}$
    \begin{displaymath}
        \dot{\textbf{L}}(t) = \textbf{F}(t,\textbf{X}^{0}\textbf{L}(t)^{\top})^{\top}\textbf{X}^{0}. 
    \end{displaymath}
    Determine $\widehat{\textbf{V}} = (\textbf{V}^{0}, \widetilde{\textbf{V}}^{1})\in \R^{N_{q}\times 2r} $ as an orthonormal basis of $ (\textbf{V}^{0},\textbf{L}(t_{1})) $ (e.g., by QR decomposition). The $N_{q}\times r$ matrix $\widetilde{\textbf{V}}^{1}$ is filled with zero columns if $\mathrm{rank}(\textbf{V}^{0},\textbf{L}(t_{1}))< r$. Compute and store the matrix $\widetilde{\textbf{S}}^{L}_{1} = \textbf{L}(t_{1})^{\top}\widetilde{\textbf{V}}^{1} $.\\
    \textbf{S-step}: For $ \overline{\textbf{S}}(t_{0}) = \textbf{S}^{0} $ solve from $t_{0}$ to $t_{1}$
    \begin{displaymath}
        \dot{\overline{\textbf{S}}}(t) = \textbf{X}^{0,\top}\textbf{F}(t,\textbf{X}^{0}\overline{\textbf{S}}(t)\textbf{V}^{0,\top})\textbf{V}^{0}.
    \end{displaymath}
    \item \textbf{Augmentation}: Construct the augmented coefficient matrix $\widehat{\textbf{S}}\in\R^{2r\times 2r}$
    \begin{displaymath}
            \widehat{\textbf{S}} = \begin{bmatrix}
                \overline{\textbf{S}}(t_{1}) & \widetilde{\textbf{S}}^{L}_{1}\\
                \widetilde{\textbf{S}}^{K}_{1} & \textbf{0}
            \end{bmatrix}.
    \end{displaymath}
    \item \textbf{Truncate}: Compute the singular value decomposition of the coefficient matrix $\widehat{\textbf{S}} = \widehat{\textbf{P}}\widehat{\boldsymbol{\Sigma}}\widehat{\textbf{Q}}^{\top}$ where $\widehat{\boldsymbol{\Sigma}}$ has the singular values, $\widehat{\sigma}_{j}$, of $\widehat{\textbf{S}}$ on its diagonal. The new rank $r_{1}$ is chosen as the minimal $r_{1}<2r$ such that, for a given tolerance $\vartheta$, the following inequality is satisfied
    \begin{displaymath}
        \left( \sum_{j = r_{1}+1}^{2r} \widehat{\sigma}_{j}^{2} \right)^{1/2} \leq \vartheta.
    \end{displaymath}
    The updated coefficient matrix $\textbf{S}_{1}$ is set as the diagonal matrix containing the first $r_{1}$ singular values of $\widehat{\boldsymbol{\Sigma}}$. To set the updated basis, we define $\textbf{P}_{r_{1}}$ and $\textbf{Q}_{r_{1}}$ to be the matrices containing the first $r_{1}$ columns of $\widehat{\textbf{P}}$ and $\widehat{\textbf{Q}}$, respectively. Then, the updated factors are set as $\textbf{X}^{1} = \widehat{\textbf{X}}\textbf{P}_{r_{1}}$ and $\textbf{V}^{1} = \widehat{\textbf{V}}\textbf{Q}_{r_{1}}$. 
\end{enumerate}
The approximation at time $\vart_{1}$ is then $\textbf{Y}^{1} = \textbf{X}^{1}\textbf{S}^{1}\textbf{V}^{1,\top}$.

\section{An AP scheme for the macro--micro equations}\label{section:EnergyStableNMM}
To simulate the thermal radiative transfer equations, we discretize the macro--micro equations \eqref{eq:MacMicSys} in its phase space variables $(t,\textbf{\textit{x}},\boldsymbol{\Omega})$. There are three important considerations when constructing a numerical scheme for the macro-micro equations. First, the numerical scheme should consistently discretize the Rosseland equation in the diffusive limit $\varepsilon\to 0$, i.e., the scheme should be AP. However, in the diffusive limit, the right-hand side of the macro--micro equations become stiff, and naive schemes require prohibitively small $\mathcal{O}(\varepsilon)$ step sizes for stability. Thus, the proposed numerical scheme should not require prohibitively small step sizes to capture the correct dynamics of the system in the diffusive limit. Finally, the numerical scheme should reduce the computational costs and memory requirements arising from the high-dimensional phase space of $g$.

In this section, we propose a numerical scheme based on the discrete ordinates method~\cite{Lewis1984Sn} for $\boldsymbol{\Omega}$, a first-order upwind discretization on staggered grids~\cite {MR3459981,MR1925043} for $\textbf{\textit{x}}$, and a first-order IMEX scheme~\cite{doi:10.1137/24M1646303} for $t$. In Section~\ref{section:NMMAPproperty} and \ref{section:EnergyStabNMM} the proposed scheme, dubbed the full-rank macro--micro scheme, is shown to be AP and energy stable for the nonlinear Stefan-Boltzmann closure with mixed hyperbolic and parabolic CFL conditions. 

\subsection{Angular discretisation}\label{section:NodalMM}
The discrete ordinates, or S$_{N}$, method \cite{Lewis1984Sn} uses a quadrature rule on the unit sphere $\mathbb{S}^{2}$ and solves the macro--micro equations in these discrete directions. The $\mathrm{S}_{N}$ method has been a popular choice for angular discretization of kinetic equations due to its ease of implementation and handling of boundary conditions~\cite{MR4315485,MR4253315} compared to other methods like the method of moments \cite{case1967linear} or the minimal entropy method \cite{10.1063/1.869849}. 

In this work, we use the tensorized Gauss-Legendre quadrature rule, also called the product quadrature, of order $q$ on the unit sphere. Let $\{\boldsymbol{\Omega}^{\ell}\}_{\ell = 1,\ldots,N_{q}}$ be $N_{q}$ quadrature points with the associated (quadrature) weights $\left\{ w^{\ell} \right\}_{\ell = 1,\ldots,N_{q}}$ and let $g_{\ell}(\cdot,\cdot) \coloneqq g(\cdot,\cdot,\boldsymbol{\Omega}^{\ell})$. Then, the macro--micro equations in the $\ell$th discrete direction are given by
\begin{subequations}
    \begin{equation*}
        \frac{\varepsilon^{2}}{c}\partial_{t}g_{\ell} + \varepsilon\left( \boldsymbol{\Omega}^{\ell}\cdot\nabla_{\textbf{\textit{x}}}g_{\ell} - \frac{1}{4\pi}\sum_{\ell'=1}^{N_{q}}w^{\ell'}\boldsymbol{\Omega}^{\ell'}\cdot\nabla_{\textbf{\textit{x}}}g_{\ell'} \right) + \boldsymbol{\Omega}^{\ell}\cdot\nabla_{\textbf{\textit{x}}}(B(T) + \varepsilon^{2}h) = -\sigma^{t}g_{\ell},
    \end{equation*}
    \begin{equation*}
        \frac{\varepsilon^{2}}{c}\partial_{t}h + \frac{1}{c}\partial_{t}B(T) + \frac{1}{4\pi}\sum_{\ell'=1}^{N_{q}}w^{\ell'}\boldsymbol{\Omega}^{\ell}\cdot\nabla_{\textbf{\textit{x}}}g_{\ell'} = -\sigma^{a}h,
    \end{equation*}
    \begin{equation*}
         c_{\nu}\partial_{t}T = 4\pi\sigma^{a}h.
    \end{equation*}
\end{subequations}
Let $\boldsymbol{\Omega}^{\ell} \coloneqq (\Omega_{x}^{\ell},\Omega_{y}^{\ell},\Omega_{z}^{\ell})^{\top}$ and $\mathcal{J}_{\textbf{\textit{x}}}\coloneqq\{ x,y,z \}$. If $\textbf{\textit{g}} \coloneqq (g_{1},\ldots,g_{N_{q}})^{\top}\in\R^{N_{q}}$, then we can write the above system of $N_{q}$ equations as
\begin{subequations}\label{eq:NodalMacMicSys}
    \begin{equation}
    \begin{split}
         \frac{\varepsilon^{2}}{c}\partial_{t}\textbf{\textit{g}} + \varepsilon\left( \textbf{I} - \frac{1}{4\pi}\mathds{1}\textbf{\textit{w}}^{\top} \right)\sum_{v\in\mathcal{J}_{\textbf{\textit{x}}}}\textbf{Q}_{v}\partial_{v}\textbf{\textit{g}} + \sum_{v\in\mathcal{J}_{\textbf{\textit{x}}}}\textbf{Q}_{v}\mathds{1}\partial_{v}(B(T) + \varepsilon^{2}h) = -\sigma^{t}\textbf{\textit{g}},
    \end{split}
    \end{equation}
    \begin{equation}
        \frac{\varepsilon^{2}}{c}\partial_{t}h + \frac{1}{c}\partial_{t}B(T) + \frac{1}{4\pi}\textbf{\textit{w}}^{\top}\sum_{v\in\mathcal{J}_{\textbf{\textit{x}}}}\textbf{Q}_{v}\partial_{v}\textbf{\textit{g}} = -\sigma^{a}h,
    \end{equation}
    \begin{equation}
        c_{\nu}\partial_{t}T = 4\pi\sigma^{a}h,
    \end{equation}
\end{subequations}
where $\textbf{\textit{w}} = (w_{1},\ldots,w_{N_{q}})^{\top}$ and $ \mathds{1} = (1,\ldots,1)^{\top}$ are vectors in $\R^{N_{q}}$. In the above equations, $\textbf{I}\in\R^{N_{q}\times N_{q}}$ denotes the identity matrix and, for $v \in \mathcal{J}_{\textbf{\textit{x}}}$,
\begin{displaymath}
    \textbf{Q}_{v} \coloneqq\begin{bmatrix} 
        \Omega_{v}^{1} & &\\
        & \ddots &\\
        &&\Omega_{v}^{N_{q}}
    \end{bmatrix} \in\R^{N_{q}\times N_{q}}.
\end{displaymath}  

\subsection{Spatio-temporal discretization}
To discretize \eqref{eq:NodalMacMicSys} in $\textbf{\textit{x}}$, we use first-order upwind discretization on staggered grids~\cite{MR3459981,MR4253315} and a first-order IMEX scheme in $t$.  This staggered-grid approach for constructing AP schemes was first described in \cite{MR3459981} for the multi-scale transport equation and in \cite{MR4253315} for the macro--micro decomposition of the radiation transport equation. For brevity, we restrict the presentation of the discretization to two spatial dimensions on a rectangular domain, i.e., $\textbf{\textit{x}} = (x,y)\in[x_{L},x_{R}]\times[y_{B},y_{T}]\subset\R^{2}$. Note that in this projected geometry, $\boldsymbol{\Omega}\in P(\mathbb{S}^{2})$ where $P(\mathbb{S}^{2})$ is the projection of the unit sphere in $\R^{3}$ onto the two dimensional plane. This is defined as 
\begin{displaymath}
    P(\mathbb{S}^{2}) \coloneqq \left\{ (\sqrt{1-\mu^{2}}\sin(\theta),\sqrt{1-\mu^{2}}\cos(\theta))~|~ 0\leq\mu\leq 1, \theta\in[0,2\pi] \right\}.
\end{displaymath}
Note that the normalization constant $4\pi$ in \eqref{eq:RTE} is replaced by $2\pi$ in this projected geometry and $\mathcal{J}_{\textbf{\textit{x}}} = \{x,y\}$.

We construct two one-dimensional equidistant staggered grids, in $ x $ and $ y $ domains, with size $\Delta x = (x_{R}-x_{L})/N_{x}$ and $\Delta y = (y_{T}-y_{B})/N_{y}$, for some $N_{x}, N_{y}\in\N$. The interfaces between intervals are given by $\{x_{i + 1/2}\}_{i = 0}^{N_{x}}$ and $\{y_{j + 1/2}\}_{j = 0}^{N_{y}}$ while their mid-points are denoted by $\{x_{i}\}_{i = 1}^{N_{x}}$ and $\{y_{j}\}_{j = 1}^{N_{y}}$. Note that the discretization is set up such that $x_{1}$, $x_{N_{x}}$, $y_{1}$, and $y_{N_{y}}$ lie on the boundaries of the domain. Following \cite{MR3459981,MR4253315} the discretization is constructed such that 
\begin{itemize}
    \item $T$ and $h$ are evaluated at $(x_{i},y_{j})$ and $(x_{i+1/2},y_{j+1/2})$ (red colored circles in \Cref{fig:2D staggered grid}),
    \item $\textbf{\textit{g}}$ is evaluated at $(x_{i+1/2},y_{j})$ and $(x_{i},y_{j+1/2})$ (blue colored diamonds in \Cref{fig:2D staggered grid}).
\end{itemize}
For an illustration of the grid, see \Cref{fig:2D staggered grid}.

We introduce the following notation to simplify the presentation of the numerical scheme. Let $k^{\pm}_{\frac{1}{2}} \coloneqq k \pm 1/2$, such that the cell interfaces are denoted by $x_{i^{\pm}_{\sfrac{1}{2}}}$ and $y_{j^{\pm}_{\sfrac{1}{2}}}$. To further simplify notation, we define $B^{n}_{ij} \coloneqq \frac{ac}{2\pi}(T^{n}_{ij})^{4} $ at $ (x_{i},y_{j}) $ and analogously $B^{n}_{\hindexp{i}{1}\hindexp{j}{1}} \coloneqq \frac{ac}{2\pi}(T^{n}_{\hindexp{i}{1}\hindexp{j}{1}})^{4} $ at $(x_{\hindexp{i}{1}},y_{\hindexp{j}{1}}) $.
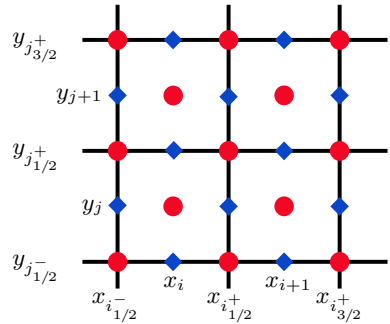
\begin{figure}[h]
    \centering
    \tikzset{every picture/.style={line width=0.75pt}} 
    \begin{tikzpicture}[x=0.75pt,y=0.75pt,yscale=-1,xscale=1]
        
        \draw  [draw opacity=0][line width=1.5]  (286.78,55.22) -- (440.67,55.22) -- (440.67,198.45) -- (286.78,198.45) -- cycle ; \draw  [line width=1.5]  (304.51,55.22) -- (304.51,198.45)(360.51,55.22) -- (360.51,198.45)(416.51,55.22) -- (416.51,198.45) ; \draw  [line width=1.5]  (286.78,72.95) -- (440.67,72.95)(286.78,128.95) -- (440.67,128.95)(286.78,184.95) -- (440.67,184.95) ; \draw  [line width=1.5]   ;
        \draw  [draw opacity=0][fill={rgb, 255:red, 10; green, 69; blue, 199 }  ,fill opacity=1 ] (304.51,151.7) -- (309.17,156.45) -- (304.51,161.2) -- (299.84,156.45) -- cycle ;
        \draw  [draw opacity=0][fill={rgb, 255:red, 10; green, 69; blue, 199 }  ,fill opacity=1 ] (332.51,180.2) -- (337.17,184.95) -- (332.51,189.7) -- (327.84,184.95) -- cycle ;
        \draw  [draw opacity=0][fill={rgb, 255:red, 10; green, 69; blue, 199 }  ,fill opacity=1 ] (360.51,152.2) -- (365.17,156.95) -- (360.51,161.7) -- (355.84,156.95) -- cycle ;
        \draw  [draw opacity=0][fill={rgb, 255:red, 10; green, 69; blue, 199 }  ,fill opacity=1 ] (332.89,123.94) -- (337.55,128.69) -- (332.89,133.44) -- (328.22,128.69) -- cycle ;
        \draw  [draw opacity=0][fill={rgb, 255:red, 10; green, 69; blue, 199 }  ,fill opacity=1 ] (388.51,180.2) -- (393.17,184.95) -- (388.51,189.7) -- (383.84,184.95) -- cycle ;
        \draw  [draw opacity=0][fill={rgb, 255:red, 10; green, 69; blue, 199 }  ,fill opacity=1 ] (416.51,152.2) -- (421.17,156.95) -- (416.51,161.7) -- (411.84,156.95) -- cycle ;
        \draw  [draw opacity=0][fill={rgb, 255:red, 10; green, 69; blue, 199 }  ,fill opacity=1 ] (304.51,96.2) -- (309.17,100.95) -- (304.51,105.7) -- (299.84,100.95) -- cycle ;
        \draw  [draw opacity=0][fill={rgb, 255:red, 10; green, 69; blue, 199 }  ,fill opacity=1 ] (360.51,96.86) -- (365.17,101.61) -- (360.51,106.36) -- (355.84,101.61) -- cycle ;
        \draw  [draw opacity=0][fill={rgb, 255:red, 10; green, 69; blue, 199 }  ,fill opacity=1 ] (416.51,96.2) -- (421.17,100.95) -- (416.51,105.7) -- (411.84,100.95) -- cycle ;
        \draw  [draw opacity=0][fill={rgb, 255:red, 10; green, 69; blue, 199 }  ,fill opacity=1 ] (332.51,68.2) -- (337.17,72.95) -- (332.51,77.7) -- (327.84,72.95) -- cycle ;
        \draw  [draw opacity=0][fill={rgb, 255:red, 10; green, 69; blue, 199 }  ,fill opacity=1 ] (388.51,68.2) -- (393.17,72.95) -- (388.51,77.7) -- (383.84,72.95) -- cycle ;
        \draw  [draw opacity=0][fill={rgb, 255:red, 242; green, 7; blue, 37 }  ,fill opacity=1 ] (299.6,185) .. controls (299.6,182.23) and (301.8,180) .. (304.51,180) .. controls (307.22,180) and (309.42,182.23) .. (309.42,185) .. controls (309.42,187.76) and (307.22,190) .. (304.51,190) .. controls (301.8,190) and (299.6,187.76) .. (299.6,185) -- cycle ;
        \draw  [draw opacity=0][fill={rgb, 255:red, 242; green, 7; blue, 37 }  ,fill opacity=1 ] (299.6,128.97) .. controls (299.6,126.21) and (301.8,123.97) .. (304.51,123.97) .. controls (307.22,123.97) and (309.42,126.21) .. (309.42,128.97) .. controls (309.42,131.73) and (307.22,133.97) .. (304.51,133.97) .. controls (301.8,133.97) and (299.6,131.73) .. (299.6,128.97) -- cycle ;
        \draw  [draw opacity=0][fill={rgb, 255:red, 242; green, 7; blue, 37 }  ,fill opacity=1 ] (299.6,72.95) .. controls (299.6,70.19) and (301.8,67.95) .. (304.51,67.95) .. controls (307.22,67.95) and (309.42,70.19) .. (309.42,72.95) .. controls (309.42,75.71) and (307.22,77.95) .. (304.51,77.95) .. controls (301.8,77.95) and (299.6,75.71) .. (299.6,72.95) -- cycle ;
        \draw  [draw opacity=0][fill={rgb, 255:red, 242; green, 7; blue, 37 }  ,fill opacity=1 ] (355.62,72.95) .. controls (355.62,70.19) and (357.82,67.95) .. (360.53,67.95) .. controls (363.24,67.95) and (365.44,70.19) .. (365.44,72.95) .. controls (365.44,75.71) and (363.24,77.95) .. (360.53,77.95) .. controls (357.82,77.95) and (355.62,75.71) .. (355.62,72.95) -- cycle ;
        \draw  [draw opacity=0][fill={rgb, 255:red, 242; green, 7; blue, 37 }  ,fill opacity=1 ] (355.62,128.97) .. controls (355.62,126.21) and (357.82,123.97) .. (360.53,123.97) .. controls (363.24,123.97) and (365.44,126.21) .. (365.44,128.97) .. controls (365.44,131.73) and (363.24,133.97) .. (360.53,133.97) .. controls (357.82,133.97) and (355.62,131.73) .. (355.62,128.97) -- cycle ;
        \draw  [draw opacity=0][fill={rgb, 255:red, 242; green, 7; blue, 37 }  ,fill opacity=1 ] (355.6,184.95) .. controls (355.6,182.19) and (357.8,179.95) .. (360.51,179.95) .. controls (363.22,179.95) and (365.42,182.19) .. (365.42,184.95) .. controls (365.42,187.71) and (363.22,189.95) .. (360.51,189.95) .. controls (357.8,189.95) and (355.6,187.71) .. (355.6,184.95) -- cycle ;
        \draw  [draw opacity=0][fill={rgb, 255:red, 242; green, 7; blue, 37 }  ,fill opacity=1 ] (411.65,185) .. controls (411.65,182.23) and (413.84,180) .. (416.56,180) .. controls (419.27,180) and (421.46,182.23) .. (421.46,185) .. controls (421.46,187.76) and (419.27,190) .. (416.56,190) .. controls (413.84,190) and (411.65,187.76) .. (411.65,185) -- cycle ;
        \draw  [draw opacity=0][fill={rgb, 255:red, 242; green, 7; blue, 37 }  ,fill opacity=1 ] (411.6,128.95) .. controls (411.6,126.19) and (413.8,123.95) .. (416.51,123.95) .. controls (419.22,123.95) and (421.42,126.19) .. (421.42,128.95) .. controls (421.42,131.71) and (419.22,133.95) .. (416.51,133.95) .. controls (413.8,133.95) and (411.6,131.71) .. (411.6,128.95) -- cycle ;
        \draw  [draw opacity=0][fill={rgb, 255:red, 242; green, 7; blue, 37 }  ,fill opacity=1 ] (411.6,72.95) .. controls (411.6,70.19) and (413.8,67.95) .. (416.51,67.95) .. controls (419.22,67.95) and (421.42,70.19) .. (421.42,72.95) .. controls (421.42,75.71) and (419.22,77.95) .. (416.51,77.95) .. controls (413.8,77.95) and (411.6,75.71) .. (411.6,72.95) -- cycle ;
        \draw  [draw opacity=0][fill={rgb, 255:red, 242; green, 7; blue, 37 }  ,fill opacity=1 ] (327.68,101.02) .. controls (327.68,98.26) and (329.88,96.02) .. (332.59,96.02) .. controls (335.3,96.02) and (337.5,98.26) .. (337.5,101.02) .. controls (337.5,103.78) and (335.3,106.02) .. (332.59,106.02) .. controls (329.88,106.02) and (327.68,103.78) .. (327.68,101.02) -- cycle ;
        \draw  [draw opacity=0][fill={rgb, 255:red, 242; green, 7; blue, 37 }  ,fill opacity=1 ] (383.6,100.95) .. controls (383.6,98.19) and (385.8,95.95) .. (388.51,95.95) .. controls (391.22,95.95) and (393.42,98.19) .. (393.42,100.95) .. controls (393.42,103.71) and (391.22,105.95) .. (388.51,105.95) .. controls (385.8,105.95) and (383.6,103.71) .. (383.6,100.95) -- cycle ;
        \draw  [draw opacity=0][fill={rgb, 255:red, 242; green, 7; blue, 37 }  ,fill opacity=1 ] (327.6,156.95) .. controls (327.6,154.19) and (329.8,151.95) .. (332.51,151.95) .. controls (335.22,151.95) and (337.42,154.19) .. (337.42,156.95) .. controls (337.42,159.71) and (335.22,161.95) .. (332.51,161.95) .. controls (329.8,161.95) and (327.6,159.71) .. (327.6,156.95) -- cycle ;
        \draw  [draw opacity=0][fill={rgb, 255:red, 242; green, 7; blue, 37 }  ,fill opacity=1 ] (383.6,156.95) .. controls (383.6,154.19) and (385.8,151.95) .. (388.51,151.95) .. controls (391.22,151.95) and (393.42,154.19) .. (393.42,156.95) .. controls (393.42,159.71) and (391.22,161.95) .. (388.51,161.95) .. controls (385.8,161.95) and (383.6,159.71) .. (383.6,156.95) -- cycle ;
        \draw  [draw opacity=0][fill={rgb, 255:red, 10; green, 69; blue, 199 }  ,fill opacity=1 ] (388.51,124.2) -- (393.17,128.95) -- (388.51,133.7) -- (383.84,128.95) -- cycle ;
        
        \draw (326.16,191) node [anchor=north west][inner sep=0.75pt]  [font=\footnotesize]  {$x_{i}$};
        \draw (377.4,191) node [anchor=north west][inner sep=0.75pt]  [font=\footnotesize]  {$x_{i+1}$};
        \draw (285,152) node [anchor=north west][inner sep=0.75pt]  [font=\footnotesize]  {$y_{j}$};
        \draw (272,96) node [anchor=north west][inner sep=0.75pt]  [font=\footnotesize]  {$y_{j+1}$};
        \draw (290,199) node [anchor=north west][inner sep=0.75pt]  [font=\footnotesize]  {$x_{\hindexm{i}{1}}$};
        \draw (348,199) node [anchor=north west][inner sep=0.75pt]  [font=\footnotesize]  {$x_{\hindexp{i}{1}}$};
        \draw (403,199) node [anchor=north west][inner sep=0.75pt]  [font=\footnotesize]  {$x_{\hindexp{i}{3}}$};
        \draw (250,180) node [anchor=north west][inner sep=0.75pt]  [font=\footnotesize]  {$y_{\hindexm{j}{1}}$};
        \draw (250,124) node [anchor=north west][inner sep=0.75pt]  [font=\footnotesize]  {$y_{\hindexp{j}{1}}$};
        \draw (250,68) node [anchor=north west][inner sep=0.75pt]  [font=\footnotesize]  {$y_{\hindexp{j}{3}}$};

            \end{tikzpicture}
     \caption{Two-dimensional staggered grid as described in \cite{MR3459981}. $h$ and $T$ are evaluated at the red circles whereas $g$ is evaluated at the blue diamonds.}
    \label{fig:2D staggered grid}
\end{figure}

Let $\abs{\textbf{Q}_{v}}\in\R^{N_{q}\times N_{q}}$, $v \in \mathcal{J}_{\textbf{\textit{x}}}$, be the diagonal matrix with entries $\left(\abs{\textbf{Q}_{v}}\right)_{\ell\ell} = |\Omega_{v}^{\ell}|$, $\ell = 1,\ldots,N_{q}$. Then, we define the matrices $\textbf{Q}^{\pm}_{v} \coloneqq (\textbf{Q}_{v} \pm |\textbf{Q}_{v}|)/2$, $v \in  \mathcal{J}_{\textbf{\textit{x}}}$. For $(\alpha,\beta)=(\hindexp{i}{1},j)$ or $(\alpha,\beta)=(i,\hindexp{j}{1})$, the first-order upwind differential operators at the interfaces are defined as
\begin{align*}
     \mathcal{D}^{+}_{x}\textbf{\textit{g}}_{\alpha\beta} &\coloneqq \frac{1}{{\Delta x}}(\textbf{\textit{g}}_{\hindexp{\alpha}{1}\beta} - \textbf{\textit{g}}_{\alpha\beta}),& \mathcal{D}^{-}_{x}\textbf{\textit{g}}_{\alpha\beta} &\coloneqq \frac{1}{{\Delta x}}(\textbf{\textit{g}}_{\alpha\beta} - \textbf{\textit{g}}_{\hindexm{\alpha}{1}\beta}),\\  
     \mathcal{D}^{+}_{y}\textbf{\textit{g}}_{\alpha\beta} &\coloneqq \frac{1}{{\Delta y}}(\textbf{\textit{g}}_{\alpha\hindexp{\beta}{1}} - \textbf{\textit{g}}_{\alpha\beta}), & \mathcal{D}^{-}_{y}\textbf{\textit{g}}_{\alpha\beta} &\coloneqq \frac{1}{{\Delta y}}(\textbf{\textit{g}}_{\alpha\beta} - \textbf{\textit{g}}_{\alpha\hindexm{\beta}{1}}),\\
     \delta^{0}_{x}h_{\alpha\beta} &\coloneqq \frac{1}{{\Delta x}}(h_{\hindexp{\alpha}{1}\beta} - h_{\hindexm{\alpha}{1}\beta}), & \delta^{0}_{y}h_{\alpha\beta} &\coloneqq \frac{1}{\Delta y}(h_{\alpha\hindexp{\beta}{1}} - h_{\alpha\hindexm{\beta}{1}}).
\end{align*}
Additionally, for $(\alpha,\beta)=(i,j)$ or $(\alpha,\beta)=(\hindexp{i}{1},\hindexp{j}{1})$, we define the following centered difference operator at the cell centers and corners
 \begin{align*}
       \mathcal{D}^{0}_{x}\textbf{\textit{g}}_{\alpha\beta} &\coloneqq \frac{1}{\Delta x}(\textbf{\textit{g}}_{\hindexp{\alpha}{1}\beta} - \textbf{\textit{g}}_{\hindexm{\alpha}{1}\beta}), & \mathcal{D}^{0}_{y}\textbf{\textit{g}}_{\alpha\beta} &\coloneqq \frac{1}{\Delta y}(\textbf{\textit{g}}_{\alpha\hindexp{\beta}{1}} - \textbf{\textit{g}}_{\alpha\hindexm{\beta}{1}}).
 \end{align*}
Note that though $\mathcal{D}^{0}$ and $\delta^{0}$ are both central difference operators, $\mathcal{D}^{0}$ approximates the derivative of $g$ at the cell centers and corners while $\delta^{0}$ approximates the derivatives of $h$ and $T$ at the interfaces.  
Finally, using an IMEX scheme similar to \cite{doi:10.1137/24M1646303} for discretizing in time, the full-rank macro--micro scheme reads
\begin{subequations}\label{eq:FDNMM}
    \begin{equation}\label{eq:FDNMM3}
    \begin{aligned}
        \frac{\varepsilon^{2}}{c\Delta t}\left( \textbf{\textit{g}}^{n+1}_{i_{\sfrac{1}{2}}^+,j} - \textbf{\textit{g}}^{n}_{i_{\sfrac{1}{2}}^+,j}\right) + \varepsilon\left( \textbf{I} - \frac{1}{2\pi}\mathds{1}\textbf{\textit{w}}^{\top} \right)\mathcal{L}_{x}\textbf{\textit{g}}^{n}_{\hindexp{i}{1}j} + \varepsilon\left( \textbf{I} - \frac{1}{2\pi}\mathds{1}\textbf{\textit{w}}^{\top} \right)\mathcal{L}_{y}\textbf{\textit{g}}^{n}_{\hindexp{i}{1}j}&\\
         + \textbf{Q}_{x}\mathds{1}\delta^{0}_{x}\left(B^{n}_{\hindexp{i}{1}j} + \varepsilon^{2}h^{n}_{\hindexp{i}{1}j} \right) + \textbf{Q}_{y}\mathds{1}\delta^{0}_{y}\left( B^{n}_{\hindexp{i}{1}j}+ \varepsilon^{2}h^{n}_{\hindexp{i}{1}j} \right) = -&\sigma^{t}_{\hindexp{i}{1}j}\textbf{\textit{g}}^{n+1}_{\hindexp{i}{1}j},
    \end{aligned}
    \end{equation}
     \begin{equation}\label{eq:FDNMM1}
        \frac{\varepsilon^{2}}{c\Delta t}\left( h^{n+1}_{ij} - h^{n}_{ij} \right) + \frac{1}{c\Delta t}\left(B^{n+1}_{ij} - B^{n}_{ij} \right) + \frac{1}{2\pi}\textbf{\textit{w}}^{\top}\textbf{Q}_{x}\mathcal{D}^{0}_{x}\textbf{\textit{g}}^{n+1}_{ij} + \frac{1}{2\pi}\textbf{\textit{w}}^{\top}\textbf{Q}_{y}\mathcal{D}^{0}_{y}\textbf{\textit{g}}^{n+1}_{ij} = -\sigma^{a}_{ij}h^{n+1}_{ij},
    \end{equation}
    \begin{equation}\label{eq:FDNMM5}
        \frac{c_{\nu}}{\Delta t}\left( T^{n+1}_{ij}-T^{n}_{ij} \right) = 2\pi\sigma^{a}_{ij}h^{n+1}_{ij},
    \end{equation}

\end{subequations} 
where, for $v\in\mathcal{J}_{\textbf{\textit{x}}}$,
\begin{align*}
    \mathcal{L}_{v}\textbf{\textit{g}}^{n}_{\hindexp{i}{1}j} &= \left(\textbf{Q}_{v}^{-}\mathcal{D}^{+}_{v} + \textbf{Q}_{v}^{+}\mathcal{D}^{-}_{v}\right)\textbf{\textit{g}}^{n}_{\hindexp{i}{1}j}.
\end{align*}
The scheme is analogously defined at $(x_{i},y_{\hindexp{j}{1}} )$ for $g$ and at $( x_{\hindexp{i}{1}},y_{\hindexp{j}{1}} )$ for $h$ and $T$.

\paragraph{Local mass conservation} Let $\phi^{n}_{ij} = B^{n}_{ij} + \varepsilon^{2}h^{n}_{ij} $ (analogously, $\phi^{n}_{\hindexp{i}{1}\hindexp{j}{1}}$) denote the scalar flux at time $t_{n}$. Then, we define the mass of the system at time $t_{n}$ as 
\begin{equation}\label{eq:massFull}
    m^{n} \coloneqq \sum_{ij} \left(\frac{2\pi}{c}\phi^{n}_{ij} + c_{\nu}T^{n}_{ij}\right)\Deltaxy + \sum_{ij} \left(\frac{2\pi}{c}\phi^{n}_{\hindexp{i}{1}\hindexp{j}{1}} + c_{\nu}T^{n}_{\hindexp{i}{1}\hindexp{j}{1}}\right)\Deltaxy,
\end{equation}  
where $ \Deltaxy = \Delta x\Delta y $. Then, the following result holds for $m^{n}$:
\begin{theorem}[Local mass conservation]
    The full-rank macro--micro scheme \eqref{eq:FDNMM} is locally conservative for Dirichlet zero or periodic boundary conditions. That is, for $n \in\{0,1\}$, the scheme fulfills the discrete conservation law 
    \begin{subequations}
        \begin{displaymath}
             \frac{1}{c\Delta t}\left(\phi^{n+1}_{ij}-\phi^{n}_{ij} \right) + \frac{1}{2\pi}\textbf{\textit{w}}^{\top}\textbf{Q}_{x}\mathcal{D}^{0}_{x}\textbf{\textit{g}}^{n+1}_{ij} + \frac{1}{2\pi}\textbf{\textit{w}}^{\top}\textbf{Q}_{y}\mathcal{D}^{0}_{y}\textbf{\textit{g}}^{n+1}_{ij} = -\sigma^{a}_{ij}h^{n+1}_{ij},
        \end{displaymath}
        \begin{displaymath}
           \frac{ c_{\nu}}{\Delta t}\left( T^{n+1}_{ij}-T^{n}_{ij} \right) = 2\pi\sigma^{a}_{ij}h^{n+1}_{ij},
        \end{displaymath}
    \end{subequations}
    with an analogous definition of the conservation law at $(x_{\hindexp{i}{1}},y_{\hindexp{j}{1}})$.
\end{theorem}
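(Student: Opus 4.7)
The plan is to derive the stated discrete conservation law as a direct algebraic rewriting of \eqref{eq:FDNMM1} and \eqref{eq:FDNMM5}, using only the definition of the scalar flux $\phi^{n}_{ij} = B^{n}_{ij} + \varepsilon^{2} h^{n}_{ij}$. No new discretization identities or summation-by-parts steps are required for the pointwise form stated in the theorem; the boundary conditions enter only if one wishes to pass from the local law to global conservation of the mass $m^{n}$ defined in \eqref{eq:massFull}.

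First I would observe that the two leading time-difference terms of \eqref{eq:FDNMM1} combine by linearity as
\begin{displaymath}
\frac{\varepsilon^{2}}{c\Delta t}\left(h^{n+1}_{ij} - h^{n}_{ij}\right) + \frac{1}{c\Delta t}\left(B^{n+1}_{ij} - B^{n}_{ij}\right) = \frac{1}{c\Delta t}\left(\phi^{n+1}_{ij} - \phi^{n}_{ij}\right).
\end{displaymath}
The remaining terms in \eqref{eq:FDNMM1}, namely the transport contributions $\tfrac{1}{2\pi}\textbf{\textit{w}}^{\top}\textbf{Q}_{v}\mathcal{D}^{0}_{v}\textbf{\textit{g}}^{n+1}_{ij}$ for $v\in\{x,y\}$ and the absorption sink $-\sigma^{a}_{ij} h^{n+1}_{ij}$, already appear in the form stated in the theorem. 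Substituting the collapsed time-difference back into \eqref{eq:FDNMM1} therefore produces the first claimed equation directly, and the second claimed equation is identical to \eqref{eq:FDNMM5} with no further manipulation. The analogous statement at the corner points $(x_{\hindexp{i}{1}},y_{\hindexp{j}{1}})$ follows by applying exactly the same algebraic step to the companion update defined at the staggered locations, whose structure is identical by construction.

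There is essentially no obstacle in the argument, which reflects the fact that the IMEX–staggered discretization was designed precisely so that the linear combination $B + \varepsilon^{2}h$ inherits a discrete conservation law from the $h$-update. The only point worth a brief remark is the role of the Dirichlet zero or periodic boundary conditions: they are not needed for the pointwise statement, but would be invoked if one next summed the local law across the grid to obtain global conservation of $m^{n}$ in \eqref{eq:massFull}, since under those boundary data the central-difference flux sums $\sum_{ij}\mathcal{D}^{0}_{v}\textbf{\textit{g}}^{n+1}_{ij}\,\Deltaxy$ telescope to zero and the coupled absorption terms in the $\phi$- and $T$-equations cancel after weighting \eqref{eq:FDNMM5} appropriately.
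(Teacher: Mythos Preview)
Your proposal is correct and essentially matches the paper's approach: the local conservation law is an immediate algebraic rewriting of \eqref{eq:FDNMM1} using $\phi^{n}_{ij}=B^{n}_{ij}+\varepsilon^{2}h^{n}_{ij}$, together with \eqref{eq:FDNMM5} unchanged, and the boundary hypothesis is used only to make the summed central-difference fluxes vanish for global conservation of $m^{n}$. The paper's proof is terser---it takes the local law as evident and records only the telescoping identity $\sum_{ij}\textbf{\textit{w}}^{\top}\textbf{Q}_{v}\mathcal{D}^{0}_{v}\textbf{\textit{g}}^{n+1}_{ij}=0$ under zero or periodic boundaries---but the content is the same.
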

\begin{proof}
    For zero or periodic boundary conditions, we have from the definition of $\mathcal{D}^{0}_{v}$, $v\in\mathcal{J}_{\textbf{\textit{x}}}$ that $\sum_{ij} \textbf{\textit{w}}^{\top}\textbf{Q}_{v}\mathcal{D}^{0}_{v}\textbf{\textit{g}}^{n+1}_{ij} = 0$, $ \sum_{ij}\textbf{\textit{w}}^{\top}\textbf{Q}_{v}\mathcal{D}^{0}_{v}\textbf{\textit{g}}^{n+1}_{\hindexp{i}{1}\hindexp{j}{1}} = 0$. Thus, the mass $m^{n}$, defined in \eqref{eq:massFull}, is conserved.  
\end{proof}
\begin{remark}
    For global mass conservation, we need the additional assumption that there is no absorption or emission of particles by the material, i.e. $\sigma^{a} = 0$.
\end{remark}

\subsection{AP property}\label{section:NMMAPproperty}

\begin{theorem}\label{theorem:APFull}
    The full-rank macro--micro scheme \eqref{eq:FDNMM} is asymptotic--preserving for the thermal radiative transfer equations. That is, it preserves the discrete Rosseland diffusion equation given by
    \begin{equation}\label{eq:ThmAPFulleq1}
        \frac{c_{\nu}}{\Delta t}\left( T^{n+1}_{ij} - T^{n}_{ij} \right) + \frac{2\pi}{c\Delta t}\left( B^{n+1}_{ij} - B^{n}_{ij} \right) = \frac{2\pi}{3}\left[ \mathcal{D}^{0}_{x}\left( \frac{1}{\sigma^{t}}\delta^{0}_{x}B^{n} \right)_{ij} + \mathcal{D}^{0}_{y}\left( \frac{1}{\sigma^{t}}\delta^{0}_{y}B^{n} \right)_{ij} \right].
    \end{equation}
    This is a 5-point centered difference discretization for the Rosseland diffusion equation \eqref{eq:RosselandApprox} on a staggered grid with an explicit time discretization. A similar relation holds at $(x_{\hindexp{i}{1}}, y_{\hindexp{j}{1}} )$.
\end{theorem}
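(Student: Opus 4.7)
The plan is to perform a formal discrete asymptotic expansion in $\varepsilon$ and recover \eqref{eq:ThmAPFulleq1} from the three coupled equations \eqref{eq:FDNMM3}, \eqref{eq:FDNMM1}, \eqref{eq:FDNMM5}. Sending $\varepsilon \to 0$ in \eqref{eq:FDNMM3}, the $\mathcal{O}(\varepsilon^{2})$ time derivative, the $\mathcal{O}(\varepsilon)$ upwind fluxes through $\mathcal{L}_{x},\mathcal{L}_{y}$, and the $\varepsilon^{2}h^{n}$ correction inside $\delta^{0}_{x},\delta^{0}_{y}$ all drop out, leaving the algebraic closure
\[
\textbf{\textit{g}}^{n+1}_{\hindexp{i}{1}j} \;=\; -\frac{1}{\sigma^{t}_{\hindexp{i}{1}j}}\Bigl(\textbf{Q}_{x}\mathds{1}\,\delta^{0}_{x}B^{n}_{\hindexp{i}{1}j} \;+\; \textbf{Q}_{y}\mathds{1}\,\delta^{0}_{y}B^{n}_{\hindexp{i}{1}j}\Bigr),
\]
with the analogous relation at $(x_{i},y_{\hindexp{j}{1}})$. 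This is the discrete counterpart of the continuum Rosseland closure $g \propto -\nabla_{\textbf{\textit{x}}}B/\sigma^{t}$.

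Next I would substitute this expression for $\textbf{\textit{g}}^{n+1}$ into the flux terms of \eqref{eq:FDNMM1}. The staggered-grid layout makes this step transparent: the centered operator $\mathcal{D}^{0}_{x}$ at $(x_{i},y_{j})$ uses exactly the values of $\textbf{\textit{g}}^{n+1}$ at $(x_{\hindexp{i}{1}},y_{j})$ and $(x_{\hindexm{i}{1}},y_{j})$, so the nested composition $\mathcal{D}^{0}_{x}\bigl((\sigma^{t})^{-1}\delta^{0}_{x}B^{n}\bigr)_{ij}$ assembles itself without any extra interpolation. Simultaneously, I would use \eqref{eq:FDNMM5} to rewrite the absorption term $-\sigma^{a}_{ij}h^{n+1}_{ij} = -\tfrac{c_{\nu}}{2\pi\Delta t}(T^{n+1}_{ij}-T^{n}_{ij})$ on the right-hand side of \eqref{eq:FDNMM1}.

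The last ingredient is the moment identities for the product Gauss--Legendre quadrature on $P(\mathbb{S}^{2})$, namely
\[
\tfrac{1}{2\pi}\,\textbf{\textit{w}}^{\top}\textbf{Q}_{x}^{2}\mathds{1} \;=\; \tfrac{1}{2\pi}\,\textbf{\textit{w}}^{\top}\textbf{Q}_{y}^{2}\mathds{1} \;=\; \tfrac{1}{3}, \qquad \textbf{\textit{w}}^{\top}\textbf{Q}_{x}\textbf{Q}_{y}\mathds{1} \;=\; 0,
\]
which follow from the exactness of the quadrature on quadratic polynomials in $\boldsymbol{\Omega}$ together with the odd parity of the mixed angular moment. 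These identities kill the cross-derivative contributions and supply the $2\pi/3$ prefactor on the diagonal diffusive fluxes. Collecting terms and multiplying through by $2\pi$ yields \eqref{eq:ThmAPFulleq1}; the argument at $(x_{\hindexp{i}{1}},y_{\hindexp{j}{1}})$ is identical by the symmetry of the staggered grid.

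The main obstacle here is not conceptual but bookkeeping: maintaining consistent staggered indices so that $\delta^{0}$ (defined on the interfaces where $B, h, T$ live) and $\mathcal{D}^{0}$ (defined at the cell centers and corners where $\textbf{\textit{g}}$ lives) compose into a genuine second-difference operator acting on $B^{n}$, and verifying that the chosen product Gauss--Legendre quadrature integrates the degree-two moments exactly so that the off-diagonal moment $\textbf{\textit{w}}^{\top}\textbf{Q}_{x}\textbf{Q}_{y}\mathds{1}$ vanishes identically rather than up to a quadrature error. No delicate balance between vanishing orders in $\varepsilon$ is required on the absorption side, since \eqref{eq:FDNMM5} reassigns $\sigma^{a}h$ directly to the temperature increment.
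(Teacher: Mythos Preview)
Your proposal is correct and follows essentially the same route as the paper's proof: take $\varepsilon\to 0$ in \eqref{eq:FDNMM3} to obtain the algebraic closure for $\textbf{\textit{g}}^{n+1}$ at the interface points, insert this into the centered flux terms of \eqref{eq:FDNMM1}, invoke the quadrature identities $\textbf{\textit{w}}^{\top}\textbf{Q}_{v}^{2}\mathds{1}=2\pi/3$ and $\textbf{\textit{w}}^{\top}\textbf{Q}_{x}\textbf{Q}_{y}\mathds{1}=0$, and then combine with \eqref{eq:FDNMM5} to eliminate $\sigma^{a}h^{n+1}$ in favor of the temperature increment. The only cosmetic difference is the order of the final substitution---the paper first isolates $\sigma^{a}_{ij}h^{n+1}_{ij}$ from \eqref{eq:FDNMM1} and then plugs it into \eqref{eq:FDNMM5}, whereas you use \eqref{eq:FDNMM5} to replace the right-hand side of \eqref{eq:FDNMM1}---but the two are algebraically identical.
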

\begin{proof}
    From \eqref{eq:FDNMM3} and the analogous definition at $(x_{i},y_{\hindexp{j}{1}})$, we see that as $\varepsilon\to 0$ we get
    \begin{displaymath}
        \textbf{\textit{g}}^{n+1}_{\alpha\beta} = -\frac{1}{\sigma^{t}_{\alpha\beta}}\left( \textbf{Q}_{x}\mathds{1}\delta^{0}_{x} + \textbf{Q}_{y}\mathds{1}\delta^{0}_{y} \right)B^{n}_{\alpha\beta},
    \end{displaymath}
    where $(\alpha,\beta) \in\{ (\hindexp{i}{1},j),(i,\hindexp{j}{1})\}$. Similarly, taking the limit $\varepsilon\to 0$ in \eqref{eq:FDNMM1} yields
    \begin{equation}\label{eq:APFulleq1}
        \sigma^{a}_{ij}h^{n+1}_{ij} = -\frac{1}{2\pi}\textbf{\textit{w}}^{\top}\textbf{Q}_{x}\mathcal{D}^{0}_{x}\textbf{\textit{g}}^{n+1}_{ij} -\frac{1}{2\pi}\textbf{\textit{w}}^{\top}\textbf{Q}_{y}\mathcal{D}^{0}_{y}\textbf{\textit{g}}^{n+1}_{ij} - \frac{1}{c\Delta t}\left(B^{n+1}_{ij} - B^{n}_{ij} \right).
    \end{equation}
    Note that $\mathcal{D}^{0}_{x}\textbf{\textit{g}}^{n+1}_{ij} = (\textbf{\textit{g}}_{\hindexp{i}{1}j} - \textbf{\textit{g}}_{\hindexm{i}{1}j})/\Delta x $ and $ \mathcal{D}^{0}_{y}\textbf{\textit{g}}^{n+1}_{ij} = (\textbf{\textit{g}}_{i\hindexp{j}{1}} - \textbf{\textit{g}}_{i\hindexm{j}{1}})/\Delta y $. Thus, substituting $\textbf{\textit{g}}^{n+1}_{\hindexp{i}{1}j}$ and $\textbf{\textit{g}}^{n+1}_{i\hindexp{j}{1}}$ in $\mathcal{D}^{0}_{x}\textbf{\textit{g}}^{n+1}_{ij}$ gives
    \begin{align*}
        \mathcal{D}^{0}_{x}\textbf{\textit{g}}^{n+1}_{ij} &= \frac{-1}{\Delta x}\left(\frac{1}{\sigma^{t}_{\hindexp{i}{1}j}}\left( \textbf{Q}_{x}\mathds{1}\delta^{0}_{x} + \textbf{Q}_{y}\mathds{1}\delta^{0}_{y} \right)B^{n}_{\hindexp{i}{1}j} - \frac{1}{\sigma^{t}_{\hindexm{i}{1}j}}\left( \textbf{Q}_{x}\mathds{1}\delta^{0}_{x} + \textbf{Q}_{y}\mathds{1}\delta^{0}_{y} \right)B^{n}_{\hindexm{i}{1}j} \right)\\
        &= \frac{-1}{\Delta x}\left(\frac{1}{\sigma^{t}_{\hindexp{i}{1}j}}\textbf{Q}_{x}\mathds{1}\delta^{0}_{x}B^{n}_{\hindexp{i}{1}j} - \frac{1}{\sigma^{t}_{\hindexm{i}{1}j}}\textbf{Q}_{x}\mathds{1}\delta^{0}_{x}B^{n}_{\hindexm{i}{1}j} \right) \\& \ \ \ \ -\frac{1}{\Delta x}\left(\frac{1}{\sigma^{t}_{\hindexp{i}{1}j}}\textbf{Q}_{y}\mathds{1}\delta^{0}_{y}B^{n}_{\hindexp{i}{1}j} - \frac{1}{\sigma^{t}_{\hindexm{i}{1}j}}\textbf{Q}_{y}\mathds{1}\delta^{0}_{y}B^{n}_{\hindexm{i}{1}j} \right)\\
        &= -\mathcal{D}^{0}_{x}\left( \frac{1}{\sigma^{t}}\textbf{Q}_{x}\mathds{1}\delta^{0}_{x}B^{n} \right)_{ij} -\mathcal{D}^{0}_{x}\left( \frac{1}{\sigma^{t}}\textbf{Q}_{y}\mathds{1}\delta^{0}_{y}B^{n} \right)_{ij}.
    \end{align*}
    A similar expression can be derived for $\mathcal{D}^{0}_{y}\textbf{\textit{g}}^{n+1}_{ij}$. Substituting $\mathcal{D}^{0}_{x}\textbf{\textit{g}}^{n+1}_{ij} $ and $\mathcal{D}^{0}_{y}\textbf{\textit{g}}^{n+1}_{ij} $ in \eqref{eq:APFulleq1} yields
    \begin{align*}
        \sigma^{a}_{ij}h^{n+1}_{ij} &= \frac{1}{2\pi}\textbf{\textit{w}}^{\top}\textbf{Q}_{x}\left[ \textbf{Q}_{x}\mathds{1}\mathcal{D}^{0}_{x}\left(\frac{1}{\sigma^{a}}\delta^{0}_{x}B^{n}  \right)_{ij} + \textbf{Q}_{y}\mathds{1}\mathcal{D}^{0}_{x}\left(\frac{1}{\sigma^{t}}\delta^{0}_{y}B^{n}  \right)_{ij} \right]\\
        & \ \ \ \  + \frac{1}{2\pi}\textbf{\textit{w}}^{\top}\textbf{Q}_{y}\left[ \textbf{Q}_{x}\mathds{1}\mathcal{D}^{0}_{y}\left(\frac{1}{\sigma^{t}}\delta^{0}_{x}B^{n}  \right)_{ij} + \textbf{Q}_{y}\mathds{1}\mathcal{D}^{0}_{y}\left(\frac{1}{\sigma^{t}}\delta^{0}_{y}B^{n}  \right)_{ij} \right]\\
        & \ \ \ \ -\frac{1}{c\Delta t}\left(B^{n+1}_{ij} - B^{n}_{ij} \right).
    \end{align*}
    Since by the choice of our quadrature $\textbf{\textit{w}}^{\top}\textbf{Q}_{x}\textbf{Q}_{x}\mathds{1} = \int_{\mathbb{P}(\mathbb{S}^{2})}(\Omega_{x})^{2} \hspace{1mm}\mathrm{d}\Omega = 2\pi/3$, $\textbf{\textit{w}}^{\top}\textbf{Q}_{y}\textbf{Q}_{y}\mathds{1} = \int_{\mathbb{P}(\mathbb{S}^{2})}(\Omega_{y})^{2} \hspace{1mm}\mathrm{d}\Omega = 2\pi/3$, and $\textbf{\textit{w}}^{\top}\textbf{Q}_{x}\textbf{Q}_{y}\mathds{1} = \int_{\mathbb{P}(\mathbb{S}^{2})}\Omega_{x}\Omega_{y} \hspace{1mm}\mathrm{d}\Omega = 0$, we get

    \begin{equation*}
         \sigma^{a}_{ij}h^{n+1}_{ij} = \frac{1}{3}\left[ \mathcal{D}^{0}_{x}\left( \frac{1}{\sigma^{t}}\delta^{0}_{x}B^{n} \right)_{ij} + \mathcal{D}^{0}_{y}\left( \frac{1}{\sigma^{t}}\delta^{0}_{y}B^{n} \right)_{ij} \right] - \frac{1}{c\Delta t}\left(B^{n+1}_{ij} - B^{n}_{ij} \right).
    \end{equation*}
    Finally, substituting the value of $h^{n+1}_{ij}$ in \eqref{eq:FDNMM5} yields \eqref{eq:ThmAPFulleq1} at $(x_{i},y_{j})$.
    A similar equality can be shown to hold at $(x_{\hindexp{i}{1}},y_{\hindexp{j}{1}})$ in the limit $\varepsilon\to 0$.
\end{proof}

\subsection{Energy stability}\label{section:EnergyStabNMM}
In this section, we show that the full-rank macro--micro scheme~\eqref{eq:FDNMM} dissipates energy under a mixed CFL condition, for a suitable definition of energy. Let $ N_{\textbf{\textit{x}}}^{I} $ denote the number of cell interface points and $ N_{\textbf{\textit{x}}}^{C} $ denote the combined number of cell centers and corners. Let $\mathcal{K}^{I}$ denote the set of all spatial indices $(\alpha,\beta)$ such that $(x_{\alpha},y_{\beta})$ lies on the interface of two cells, i.e. it is of the form $(x_{\hindexp{i}{1}},y_{j})$ or $(x_{i},y_{\hindexp{j}{1}})$. Then, to map $g_{\kappa,\ell}$, $\kappa\in\mathcal{K}^{I}$, $\ell = 1,\ldots,N_{q}$, to the matrix $\textbf{g}\in\R^{N_{\textbf{\textit{x}}}^{\mathrm{I}}\times N_{q}} $ we define the bijective index map $\varrho^{I}:\mathcal{K}^{I}\to \{1,\ldots,N_{\textbf{\textit{x}}}^{I}\}$ such that $ (\textbf{g})_{\varrho(\kappa),\ell}=g_{\kappa,\ell}$. Similarly, let $\mathcal{K}^{C}$ denote the set of all spatial indices $(\alpha,\beta)$ such that $(x_{\alpha},y_{\beta})$ is either the corner of a cell or its center and $\varrho^{C}:\mathcal{K}^{C}\to \{1,\ldots, N_{\textbf{\textit{x}}}^{C}\}$ denote the corresponding bijective index map. Then, we define $\textbf{\textit{h}}\in\R^{N_{\textbf{\textit{x}}}^{C}}$ and $\textbf{\textit{T}}\in\R^{N_{\textbf{\textit{x}}}^{C}}$ with elements $(\textbf{\textit{h}})_{\varrho^{C}(\kappa)} = h_{\kappa}$ and $(\textbf{\textit{T}})_{\varrho^{C}(\kappa)} = T_{\kappa}$, respectively. The discrete-$ L^{2} $ norms of $\textbf{\textit{h}}$, $\textbf{\textit{T}}$, and $\textbf{g}$ are given as
\begin{equation*}
    \begin{aligned}
        \norm{\textbf{\textit{h}}}^{2} \coloneqq \sum_{\kappa\in\mathcal{K}^{C}}h_{\kappa}^{2}\Deltaxy, \quad
        \norm{\textbf{\textit{T}}}^{2} \coloneqq \sum_{\kappa\in\mathcal{K}^{C}}T_{\kappa}^{2}\Deltaxy, \quad
        \norm{\textbf{g}}^{2}_{\Omega} \coloneqq \sum_{\kappa\in\mathcal{K}^{I}}\textbf{\textit{g}}_{\kappa}^{\top}\textbf{M}^{2}\textbf{\textit{g}}_{\kappa}\Deltaxy 
        &= \sum_{\kappa\in\mathcal{K}^{I}}\sum_{\ell}w_{\ell}g_{\kappa,\ell}^{2}\Deltaxy,
    \end{aligned}
\end{equation*}
where $ \textbf{\textit{g}}_{\kappa} = (g_{\kappa,1},\ldots,g_{\kappa,N_{q}})^{\top}$ and $(\textbf{M})_{ij} = \sqrt{w_{i}}\delta_{ij} $, for $\delta_{ij} = 1$, if $i=j$ and $0$ otherwise. The energy at time $t_{n}$ is then defined as
\begin{equation}\label{eq:energydiscrete}
    e^{n} \coloneqq \norm{\frac{1}{c}\textbf{\textit{B}}^{n} + \frac{\varepsilon^{2}}{c}\textbf{\textit{h}}^{n}}^{2} + \frac{1}{2\pi}\norm{\frac{\varepsilon}{c}\textbf{g}^{n}}^{2}_{\Omega} + \frac{2}{5}\norm{\frac{\sqrt{ac_{\nu}}}{2\pi}(\textbf{\textit{T}}^{n})^{5/2}}^{2}.
\end{equation}
Note that in the remainder of the paper, we drop the subscript $\Omega$ from $\norm{\cdot}_{\Omega}$ and denote it by $\norm{\cdot}$ when it is clear from the context that $\norm{\cdot}_{\Omega}$ is chosen.
\begin{theorem}\label{thm:StabNMM}
    For a given spatial grid size $\Delta x, \Delta y$, let the step size $\Delta t$ satisfy the following CFL condition for all $\Omega_{x}^{\ell},\Omega_{y}^{\ell}\neq 0$,
    \begin{equation}\label{eq:CFLcond}
        \Delta t \leq \frac{1}{3c}\mathrm{min}\left\{ \varepsilon\Delta x + \frac{\sigma^{t}_{0}\Delta x^{2}}{4\abs{\Omega_{x}^{\ell}}}, \varepsilon\Delta y + \frac{\sigma^{t}_{0}\Delta y^{2}}{4\abs{\Omega_{y}^{\ell}}} \right\}.
    \end{equation}
    Then, the full-rank macro--micro scheme given by \eqref{eq:FDNMM} is energy stable, i.e. $e^{n+1} \leq e^{n},$ where the energy is defined in \eqref{eq:energydiscrete}.
\end{theorem}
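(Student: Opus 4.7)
I plan to estimate $e^{n+1}-e^{n}$ by testing each discrete equation in \eqref{eq:FDNMM} against a carefully chosen multiplier and exhibiting the result as a sum of non-positive contributions under the CFL~\eqref{eq:CFLcond}. For the two quadratic pieces of the energy I invoke the identity $a^{2}-b^{2}=2a(a-b)-(a-b)^{2}$: the $\phi$-piece is controlled by $(2/c^{2})(B^{n+1}+\varepsilon^{2}h^{n+1})(\phi^{n+1}-\phi^{n})$ minus a quadratic remainder, and the $\textbf{g}$-piece by $(\varepsilon^{2}/(\pi c^{2}))\textbf{\textit{g}}^{n+1,\top}\textbf{M}^{2}(\textbf{\textit{g}}^{n+1}-\textbf{\textit{g}}^{n})$ minus a remainder. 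For the nonlinear temperature piece, convexity of $T\mapsto T^{5}/5$ yields $(T^{n+1})^{5}-(T^{n})^{5}\leq 5(T^{n+1})^{4}(T^{n+1}-T^{n})$, so the natural test function is proportional to $B^{n+1}=(ac/(2\pi))(T^{n+1})^{4}$. These three test functions are compatible in precisely the way needed to drive the cancellations below.

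\textbf{Substitution and cancellation.} Replacing the three time differences via \eqref{eq:FDNMM1}, \eqref{eq:FDNMM3} and \eqref{eq:FDNMM5}, two structural cancellations take place after summing over the grid. First, the absorption contribution generated by the temperature test, proportional to $B^{n+1}\sigma^{a}h^{n+1}$ (since $(T^{n+1})^{4}=(2\pi/(ac))B^{n+1}$), exactly cancels the analogous contribution produced by testing the $h$-equation against $\phi^{n+1}$; what remains is the manifestly non-positive term $-(2\Delta t\varepsilon^{2}/c)\sigma^{a}(h^{n+1})^{2}$. Second, discrete summation by parts on the staggered grid makes $\DiffO_{v}$ and $\deltaO_{v}$ adjoints up to boundary terms that vanish under zero or periodic boundary conditions, so that the flux $\textbf{Q}_{v}\mathds{1}\deltaO_{v}\phi^{n}$ of the $g$-equation and the divergence $\textbf{\textit{w}}^{\top}\textbf{Q}_{v}\DiffO_{v}\textbf{\textit{g}}^{n+1}$ of the $h$-equation do \emph{not} cancel completely because they see $\phi$ at different time levels: what remains is an IMEX residual proportional to $(\phi^{n+1}-\phi^{n})\cdot\DiffO_{v}(\textbf{\textit{w}}^{\top}\textbf{Q}_{v}\textbf{\textit{g}}^{n+1})$, of formal order $\Delta t^{2}$.

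\textbf{Dissipation and CFL.} The strictly negative terms collected at this stage are: the implicit absorption $-(\Delta t\sigma^{t}_{0}/(\pi c))\norm{\textbf{g}^{n+1}}^{2}_{\Omega}$; the quadratic remainders $-(1/c^{2})\norm{\phi^{n+1}-\phi^{n}}^{2}$ and $-(\varepsilon^{2}/(2\pi c^{2}))\norm{\textbf{g}^{n+1}-\textbf{g}^{n}}^{2}_{\Omega}$; and the upwind dissipation. For the latter, I split $\textbf{\textit{g}}^{n}=\textbf{\textit{g}}^{n+1}-(\textbf{\textit{g}}^{n+1}-\textbf{\textit{g}}^{n})$ inside the explicit term $\mathcal{L}_{v}\textbf{\textit{g}}^{n}$; the standard discrete identity then turns the $\textbf{\textit{g}}^{n+1}$-part into a genuine dissipation of numerical-viscosity type, with weight $\abs{\Omega^{\ell}_{v}}\Delta x$ per direction, plus an $\mathcal{O}(\Delta t)$ correction. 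The projector $\textbf{I}-(1/(2\pi))\mathds{1}\textbf{\textit{w}}^{\top}$, being $\textbf{M}^{2}$-orthogonal onto the mean-free subspace, only strengthens this negativity. It then remains to absorb three cross terms on the $\textbf{\textit{g}}$-side (the IMEX flux residual, the $\textbf{\textit{g}}^{n+1}-\textbf{\textit{g}}^{n}$ correction from the explicit upwind, and the explicit-time coupling with $\phi^{n}$) by three Young inequalities: each splitting deposits one piece into the $\norm{\phi^{n+1}-\phi^{n}}^{2}$ or $\norm{\textbf{g}^{n+1}-\textbf{g}^{n}}^{2}$ reservoir and the other into the upwind or absorption dissipation. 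Matching the three pairs of constants yields exactly \eqref{eq:CFLcond}, with the prefactor $1/3$ encoding the three-way Young split and the additive form $\varepsilon\Delta x+\sigma^{t}_{0}\Delta x^{2}/(4\abs{\Omega^{\ell}_{x}})$ reflecting the two physical dissipation mechanisms (hyperbolic numerical viscosity vs.\ parabolic-in-disguise damping).

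\textbf{Main obstacle.} The genuinely delicate step is obtaining a bound that is uniform in $\varepsilon$. For $\varepsilon\sim 1$ only the hyperbolic summand of \eqref{eq:CFLcond} is tight, whereas for $\varepsilon\to 0$ the prefactor $1/\varepsilon^{2}$ multiplying the gradient of $\phi^{n}$ in the $g$-update must be controlled by the implicit damping $\sigma^{t}/\varepsilon^{2}$ together with the $\varepsilon^{2}$-weight of the $\textbf{g}^{n+1}-\textbf{g}^{n}$ reservoir, which jointly produce the parabolic term $\sigma^{t}_{0}\Delta x^{2}/(4\abs{\Omega^{\ell}_{x}})$. Choosing the three Young weights so that a single additive CFL of the stated form covers both regimes simultaneously, with the sharp constant $1/(3c)$, is where the bulk of the technical effort lies and will require careful tracking of the quadrature-weighted inner products and the interplay between the implicit and explicit substeps throughout.
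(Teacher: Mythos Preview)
Your proposal is essentially the paper's own proof: test \eqref{eq:FDNMM1} against $\phi^{n+1}/c$, \eqref{eq:FDNMM3} against $(\textbf{\textit{g}}^{n+1})^{\top}\textbf{M}^{2}$, \eqref{eq:FDNMM5} against $B^{n+1}/c$, invoke the convexity inequality $(T^{n+1})^{5}-(T^{n})^{5}\leq 5(T^{n+1})^{4}(T^{n+1}-T^{n})$, use staggered summation by parts to pair the $\delta^{0}_{v}$ and $\mathcal{D}^{0}_{v}$ terms, and absorb the remaining explicit cross-terms by Young's inequality. Two small corrections to your bookkeeping: the projector $\textbf{I}-(2\pi)^{-1}\mathds{1}\textbf{\textit{w}}^{\top}$ does not ``strengthen'' anything---its contribution vanishes identically because $(\textbf{\textit{g}}^{n+1})^{\top}\textbf{M}^{2}\mathds{1}=(\textbf{\textit{g}}^{n+1})^{\top}\textbf{\textit{w}}=0$; and there are only \emph{two} genuine Young splits (the IMEX flux residual with weight $1/(4\Delta t)$ and the $\textbf{\textit{g}}^{n+1}-\textbf{\textit{g}}^{n}$ upwind correction with weight $\varepsilon/(4c\Delta t)$), the factor $3$ in the CFL arising because their combined deposit onto the upwind dissipation is $\Delta t/(4\pi)+\Delta t/(2\pi)=3\Delta t/(4\pi)$, not from a three-way split.
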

\begin{proof}
	To keep the main part of the paper short, the proof of the theorem has been shifted to \Cref{appendix:ProofNMMstab}.
\end{proof}


\section{An AP parallel low-rank scheme for macro--micro equations}\label{section:ParallelDLRAforNMM}
From Theorem~\ref{theorem:APFull} we see that in the diffusive limit, $\varepsilon\to 0$, the full-rank macro--micro scheme is a consistent discretization of the Rosseland equations. Additionally, Theorem~\ref{thm:StabNMM} shows that in the diffusive limit, the full-rank macro--micro scheme doesn't need to resolve small time scales to capture the correct dynamics of the system. Thus, the scheme~\eqref{eq:FDNMM} addresses the first two challenges outlined at the beginning of Section~\ref{section:EnergyStableNMM}, namely the AP property and energy stability without restrictive CFL conditions. 

However, the full-rank macro--micro scheme requires storing and updating $g$ at each pair $(\textbf{\textit{x}}_{\kappa},\boldsymbol{\Omega}_{\ell})$, for $\kappa\in\mathcal{K}^{I}$ and $\ell=1,\ldots,N_{q}$, and hence has a high computational cost and memory footprint. Thus, further modifications must be made to address the high-dimensional phase space of $g$ and reduce the computational costs of the scheme. In~\cite{doi:10.1137/24M1646303} the authors propose to use DLRA~\cite{doi:10.1137/050639703} to reduce the computational costs of the thermal radiative transfer equations using the augmented BUG integrator~\cite{robustBUG}. However, the proposed macro--micro augmented BUG scheme requires a serial S-step at twice the current rank. This increases the overall computational cost and memory requirement for the scheme. Moreover, only far-field boundaries have been considered in~\cite{doi:10.1137/24M1646303}, and thus efficient methods of implementing boundary conditions for the macro--micro low-rank schemes remain an open topic. In this section, we outline the use of the parallel BUG integrator~\cite{ceruti2023parallel} to update all the factors in parallel and outline a procedure to efficiently incorporate boundary conditions.


For $v\in\mathcal{J}_{\textbf{\textit{x}}}$, let $ \mathbf{D}^{\pm}_{v}\in\R^{N_{\textbf{\textit{x}}}^{I}\times N_{\textbf{\textit{x}}}^{I}} $, $ \mathbf{d}^{0}_{v}\in\R^{N_{\textbf{\textit{x}}}^{C}\times N_{\textbf{\textit{x}}}^{I}} $, and $ \mathbf{D}^{0}_{v}\in\R^{N_{\textbf{\textit{x}}}^{I}\times N_{\textbf{\textit{x}}}^{C}} $ denote the matrix form of the operators $\mathcal{D}^{\pm}_{v}$, $ \delta^{0}_{v} $, and  $\mathcal{D}^{0}_{v}$, respectively. To begin, we consider the time-continuous evolution of $\textbf{g}(t)\in\R^{N_{\textbf{\textit{x}}}^{\mathrm{I}}\times N_{q}} $ which reads
\begin{equation}\label{eq:MicTimeEvol}
     \frac{\varepsilon^{2}}{c}\dot{\textbf{g}} = -\varepsilon\mathbf{L}_{\textbf{\textit{x}}}\textbf{g}\left( \textbf{I} - \frac{1}{2\pi}\mathds{1}\textbf{\textit{w}}^{\top} \right)^{\top} - \mathbf{d}^{0}_{\textbf{\textit{x}}}(B(\textbf{\textit{T}}) + \varepsilon^{2}\textbf{\textit{h}})\mathds{1}^{\top}\textbf{Q} -\boldsymbol{\Sigma}^{t}\textbf{g},
\end{equation}
where we use the short-hand notation $\mathbf{L}_{\textbf{\textit{x}}}\textbf{g} = (\mathbf{D}_{x}^{+}\textbf{g}\textbf{Q}_{x}^{-} + \mathbf{D}_{x}^{-}\textbf{g}\textbf{Q}_{x}^{+}) + (\mathbf{D}_{y}^{+}\textbf{g}\textbf{Q}_{y}^{-} + \mathbf{D}_{y}^{-}\textbf{g}\textbf{Q}_{y}^{+})$, and $\mathbf{d}^{0}_{\textbf{\textit{x}}}(B(\textbf{\textit{T}}) + \varepsilon^{2}\textbf{\textit{h}})\mathds{1}^{\top}\textbf{Q}_{\textbf{\textit{x}}} = \mathbf{d}^{0}_{x}(B(\textbf{\textit{T}}) + \varepsilon^{2}\textbf{\textit{h}})\mathds{1}^{\top}\textbf{Q}_{x} + \mathbf{d}^{0}_{y}(B(\textbf{\textit{T}}) + \varepsilon^{2}\textbf{\textit{h}})\mathds{1}^{\top}\textbf{Q}_{y}$. If $\boldsymbol{\Sigma}^{a}, \boldsymbol{\Sigma}^{s}\in\R^{N_{\textbf{\textit{x}}}^{I}\times N_{\textbf{\textit{x}}}^{I}}$ such that $(\boldsymbol{\Sigma}^{a})_{(\rho(\kappa),\rho(\kappa))} = \sigma^{a}_{\kappa}$ and $(\boldsymbol{\Sigma}^{s})_{(\rho(\kappa),\rho(\kappa))} = \sigma^{s}_{\kappa}$, for $\kappa\in\mathcal{K}^{I}$, then the total cross section $\boldsymbol{\Sigma}^{t}$ is given by $\boldsymbol{\Sigma}^{t} = \boldsymbol{\Sigma}^{a} + \boldsymbol{\Sigma}^{s}$. 

We make the following low-rank ansatz for $\textbf{g}$ 
\begin{displaymath}
    \textbf{g}(t) \approx \textbf{X}(t)\textbf{S}(t)\textbf{V}(t)^{\top},
\end{displaymath}
where $\textbf{X}(t)\in\R^{N_{\textbf{\textit{x}}}^{\mathrm{I}}\times r} $, $\textbf{V}(t)\in\R^{N_{q}\times r} $ are orthonormal and $\textbf{S}(t)\in\R^{r\times r} $ is invertible. The parallel BUG update scheme for \eqref{eq:MicTimeEvol} can be derived by following the steps outlined in Section~\ref{section:DLRA} with $\textbf{F}(t,\textbf{Y}) = -\varepsilon\mathbf{L}_{\textbf{\textit{x}}}\textbf{X}\textbf{S}\textbf{V}^{\top}\left( \textbf{I} - \frac{1}{2\pi}\mathds{1}\textbf{\textit{w}}^{\top} \right)^{\top} - \mathbf{d}^{0}_{\textbf{\textit{x}}}(B(\textbf{\textit{T}}) + \varepsilon^{2}\textbf{\textit{h}})\mathds{1}^{\top}\textbf{Q} -\boldsymbol{\Sigma}^{t}\textbf{X}\textbf{S}\textbf{V}^{\top}$. From Theorem~\ref{theorem:APFull} we know that in the limit $\varepsilon\to 0$, 
\begin{displaymath}
    \textbf{g}^{n+1} = -(\boldsymbol{\Sigma}^{t})^{-1}\mathbf{d}^{0}_{\textbf{\textit{x}}}B(\textbf{\textit{T}}^{n})\mathds{1}^{\top}\textbf{Q}_{\textbf{\textit{x}}}.
\end{displaymath}
Thus, for the low-rank parallel BUG scheme to be AP, $(\boldsymbol{\Sigma}^{t})^{-1}\mathbf{d}^{0}_{\textbf{\textit{x}}}B(\textbf{\textit{T}})$ must be in the range space of $\textbf{X}^{n+1}$ at time $t_{n+1}$ while $\mathds{1}^{\top}\textbf{Q}_{\textbf{\textit{x}}}$ must be in the range space of $\textbf{V}^{n+1}$. It can be shown that $(\boldsymbol{\Sigma}^{t})^{-1}\mathbf{d}^{0}_{\textbf{\textit{x}}}B(\textbf{\textit{T}})\in\mathrm{range}(\widehat{\textbf{X}})$ and $\mathds{1}^{\top}\textbf{Q}_{\textbf{\textit{x}}}\in\mathrm{range}(\widehat{\textbf{V}})$ (see Theorem~\ref{theorem:APparBUG}). However, due to the truncation step of the parallel BUG integrator, the range space of the augmented basis $\widehat{\textbf{X}}$ and $\widehat{\textbf{V}}$ need not be preserved in $\textbf{X}^{n+1}$, $\textbf{V}^{n+1}$. In general, this also holds for the class of rank-adaptive BUG integrators~\cite{robustBUG,ceruti2023parallel} which augment and truncate the basis up to a given tolerance. Based on the conservative truncation proposed in \cite{EINKEMMER2023112060}, a mitigation tactic was proposed in \cite{doi:10.1137/24M1646303}. The basis vectors $(\boldsymbol{\Sigma}^{t})^{-1}\mathbf{d}^{0}_{\textbf{\textit{x}}}B(\textbf{\textit{T}})$ and $\mathds{1}^{\top}\textbf{Q}_{\textbf{\textit{x}}}$ are augmented to the updated factors $\widehat{\textbf{X}}$ and $\widehat{\textbf{V}}$, then a conservative truncation strategy is used to preserve the range space of the augmented basis in the next time step. This guarantees that the resulting scheme is AP~\cite[Theorem~4.5]{doi:10.1137/24M1646303}. However, such a technique cannot be used for the parallel BUG integrator since the factors are updated in parallel and the dynamics are entirely determined by the solution at time $t_{n}$. Thus, we propose the following modifications to the parallel BUG integrator to derive a low-rank AP scheme for the thermal radiative transfer equations. 

For the initial rank-$r_{n}$ data $\textbf{\textit{T}}^{n}$, $\textbf{\textit{h}}^{n}$, $\textbf{X}^{n}$, $\textbf{S}^{n}$ and $\textbf{V}^{n}$ at time $t_{n}$ the low-rank factors are updated in four steps:
\begin{enumerate}
    \item \textbf{Pre-augmentation}: The spatially discretised gradient $\frac{1}{\sigma^{t}}\nabla_{\textbf{\textit{x}}}B(T)$ at time $t_{n}$ is augmented to the spatial basis $\textbf{X}^{n}$ and $\boldsymbol{\Omega}$ to the angular basis $\textbf{V}^{n}$. That is set 
\begin{displaymath}
    \begin{aligned}
      \textbf{X}^{n}_{\mathrm{aug}} &= \left[(\boldsymbol{\Sigma}^{t})^{-1}\mathbf{d}^{0}_{x}B(\textbf{\textit{T}}^{n}), (\boldsymbol{\Sigma}^{t})^{-1}\mathbf{d}^{0}_{y}B(\textbf{\textit{T}}^{n}), \textbf{X}^{n} \right],& \textbf{V}^{n}_{\mathrm{aug}} &= \left[\textbf{Q}_{x}\mathds{1}, \textbf{Q}_{y}\mathds{1}, \textbf{V}^{n} \right],
    \end{aligned}    
\end{displaymath}
and orthonormalize such that $ \left(\textbf{V}^{n}_{\mathrm{aug}}\right)^{\top}\textbf{\textit{w}} = 0 $ component-wise. Then, we project the coefficient matrix onto the augmented basis, i.e. we set $\textbf{S}^{n}_{\mathrm{aug}} = \textbf{X}^{n,\top}_{\mathrm{aug}}\textbf{X}^{n}\textbf{S}^{n}\textbf{V}^{n,\top}\textbf{V}^{n}_{\mathrm{aug}} $. The new initial rank is denoted by $\widetilde{r}_{n} = r_{n}+2$. \textit{Note that the subscript "$\mathrm{aug}$" is dropped in the rest of the paper and with abuse of notation we denote low-rank factors by} $\textbf{X}^{n}$, $\textbf{V}^{n}$, \textit{and} $\textbf{S}^{n}$.

\item \textbf{Parallel update}:\\
\textbf{K-step}: For $\textbf{K}^{n} = \textbf{X}^{n}\textbf{S}^{n} \in\R^{N_{\textbf{\textit{x}}}^{I}\times \widetilde{r}_{n}}$ update from $t_{n} $ to $t_{n+1} = t_{n} + \Delta t $
\begin{equation}\label{eq:NMMparK}
    \begin{split}
        \frac{\varepsilon^{2}}{c\Delta t}\left( \textbf{K}^{n+1} - \textbf{K}^{n} \right) = -\varepsilon\mathbf{L}_{\textbf{\textit{x}}}\textbf{K}^{n}\textbf{V}^{n,\top}\left( \textbf{I} - \frac{1}{2\pi}\mathds{1}\textbf{\textit{w}}^{\top} \right)^{\top}\textbf{V}^{n}\\ - \mathbf{d}^{0}_{\textbf{\textit{x}}}(B(\textbf{\textit{T}}^{n}) + \varepsilon^{2}\textbf{\textit{h}}^{n})\mathds{1}^{\top}\textbf{Q}_{\textbf{\textit{x}}}\textbf{V}^{n} -\boldsymbol{\Sigma}^{t}\textbf{K}^{n+1}.
    \end{split}
\end{equation}
Determine $\widehat{\textbf{X}} = (\textbf{X}^{n}, \widetilde{\textbf{X}}^{n+1})\in \R^{N_{\textbf{\textit{x}}}\times 2\widetilde{r}_{n}} $ and $\widetilde{\textbf{S}}^{K} = \widetilde{\textbf{X}}^{n+1,\top}\textbf{K}^{n+1} $ as described in Section~\ref{section:DLRA}.\\
\textbf{L-step}: For $\textbf{L}^{n} = \textbf{V}^{n}\textbf{S}^{n,\top} \in\R^{N_{q} \times \widetilde{r}_{n}} $ update from $t_{n} $ to $t_{n+1} = t_{n} + \Delta t $
\begin{equation}\label{eq:NMMparL}
    \begin{split}
        \frac{\varepsilon^{2}}{c\Delta t}\left( \textbf{L}^{n+1} - \textbf{L}^{n} \right) = -\varepsilon\left( \textbf{I} - \frac{1}{2\pi}\mathds{1}\textbf{\textit{w}}^{\top} \right)\textbf{L}^{n}\textbf{X}^{n,\top}\mathbf{L}_{\textbf{\textit{x}}}^{\top}\textbf{X}^{n} \\ - \textbf{Q}_{\textbf{\textit{x}}}\mathds{1}(B(\textbf{\textit{T}}^{n}) + \varepsilon^{2}\textbf{\textit{h}}^{n})^{\top}(\mathbf{d}^{0}_{\textbf{\textit{x}}})^{\top}\textbf{X}^{n} -\textbf{L}^{n+1}\textbf{X}^{n,\top}\boldsymbol{\Sigma}^{t}\textbf{X}^{n}.
    \end{split}
\end{equation}
Determine $\widehat{\textbf{V}} = (\textbf{V}^{n}, \widetilde{\textbf{V}}^{n+1})\in \R^{N_{\textbf{\textit{x}}}\times 2\widetilde{r}_{n}} $ such that $\widetilde{\textbf{V}}^{n+1,\top}\textbf{\textit{w}} = 0$ and store the matrix $\widetilde{\textbf{S}}^{L} = \textbf{L}^{n+1,\top}\widetilde{\textbf{V}}^{n+1} $ as described in Section~\ref{section:DLRA}\\
\textbf{S-step}: We update from $t_{n} $ to $t_{n+1} = t_{n} + \Delta t $
\begin{equation}\label{eq:NMMparS}
    \begin{split}
        \frac{\varepsilon^{2}}{c\Delta t}\left( \overline{\textbf{S}}^{n+1} - \textbf{S}^{n} \right) = -\varepsilon\textbf{X}^{n,\top}\mathbf{L}_{\textbf{\textit{x}}}\textbf{X}^{n}\textbf{S}^{n}\textbf{V}^{n,\top}\left( \textbf{I} - \frac{1}{2\pi}\mathds{1}\textbf{\textit{w}}^{\top} \right)^{\top}\textbf{V}^{n}\\ - \textbf{X}^{n,\top}\mathbf{d}^{0}_{\textbf{\textit{x}}}(B(\textbf{\textit{T}}^{n}) + \varepsilon^{2}\textbf{\textit{h}}^{n})\mathds{1}^{\top}\textbf{Q}_{\textbf{\textit{x}}}\textbf{V}^{n} -\textbf{X}^{n,\top}\boldsymbol{\Sigma}^{t}\textbf{X}^{n}\overline{\textbf{S}}^{n+1}.
    \end{split}
\end{equation}

\item \textbf{Augmentation}: Perform the augmentation of the coefficient matrix, i.e. set $\widehat{\textbf{S}}$ to be
    \begin{displaymath}
            \widehat{\textbf{S}} = \begin{bmatrix}
                \overline{\textbf{S}}(t_{n+1}) & \widetilde{\textbf{S}}^{L}\\
                \widetilde{\textbf{S}}^{K} & \mathbf{0}
            \end{bmatrix}.
    \end{displaymath}

\item \textbf{Conservative truncation}: A conservative truncation strategy, similar to the one used in \cite{doi:10.1137/24M1646303,EINKEMMER2023112060} is used to truncate the augmented basis while preserving the pre-augmented basis vectors at the next time step.
\end{enumerate}
After the truncation step, we set the solution at $t_{n+1}$ as $\mathbf{g}^{n+1}= \textbf{X}^{n+1}\textbf{S}^{n+1}\textbf{V}^{n+1,\top}$ with rank $1\leq r_{n+1} < 2\widetilde{r}_{n}$. The update for $h$ and $T$ remain the same as \eqref{eq:FDNMM} with the modification that $g_{\kappa,\ell}^{n+1} = (\textbf{X}^{n+1}\textbf{S}^{n+1}\textbf{V}^{n+1,\top})_{\varrho(\kappa),\ell}$, for $\kappa\in\mathcal{K}^{I}$, $\ell=1,\ldots,N_{q}$ .

\paragraph{Boundary conditions}

An open question in dynamical low-rank approximation is the efficient implementation of boundary conditions. To impose boundary conditions, let us collect all spatial indices that lie on the boundary in the set $\mathcal{K}_{\mathrm{B}}^{I}:= \{(\alpha, \beta)\in\mathcal{K}^{I} \,\lvert\, (x_{\alpha},y_{\beta})\in\partial\mathcal{D}\}$ with $N_{\mathrm{B}}:=|\mathcal{K}_{\mathrm{B}}^{I}|$ boundary cells. Defining the bijective index map $\varrho^{I}_{B}:\mathcal{K}_{\mathrm{B}}^{I}\to\{1,\ldots, N_{\mathrm{B}}\} $, we can define the solution on the boundary as $\mathbf{\widetilde{g}}\in\mathbb{R}^{N_{\mathrm{B}} \times N_q}$ with elements $\widetilde{g}_{k\ell} = g_{\varrho^{I}_{\mathrm{B}}(\kappa),\ell}$ where $k\in\{1,\cdots, N_{\mathrm{B}}\}$, $\kappa\in \mathcal{K}_{\mathrm{B}}^{I}$. Vise-versa, we have $g_{\kappa\ell} = \widetilde{g}_{(\varrho^{I}_{\mathrm{B}})^{-1}(k),\ell}$. Now, to impose reflective-transmitive boundary conditions, we define
\begin{align}\label{eq:hatK}
    \widehat{g}_{k\ell} =
    \begin{cases}
        \rho  \widetilde{g}^{n+1}_{k\ell'} - (1-\rho)\varepsilon h^{n}_{(\varrho^{I}_{\mathrm{B}})^{-1}(k)},&  \mathrm{if }~\textbf{\textit{n}}_{k}\cdot\boldsymbol{\Omega}_{\ell}<0\\
         \widetilde{g}^{n+1}_{k\ell}, & \mathrm{if}~\textbf{\textit{n}}_{k}\cdot\boldsymbol{\Omega}_{\ell}\geq0
    \end{cases}\,.
\end{align}
where $\textbf{\textit{n}}_{k}$ is the outward-pointing normal at position $\textbf{\textit{x}}_{(\varrho^{I}_{\mathrm{B}})^{-1}(k)}$ and $\boldsymbol{\Omega}_{\ell'}$ is the reflection of $\boldsymbol{\Omega}_{\ell}$ along $\textbf{\textit{n}}_{k}$. The value of $h^{n}_{(\varrho^{I}_{\mathrm{B}})^{-1}(k)}$ is interpolated at the boundary point. We note that $\mathbf{\widetilde g}$ in \eqref{eq:hatK} can be computed efficiently by restricting $\textbf{X}^{n+1}$ to boundary points from ghost cells. Lastly, it remains to \emph{efficiently} impose $\mathbf{\widehat{g}}\in\mathbb{R}^{N_{\mathrm{B}} \times N_q}$ on the low-rank factorized solution, which we do after the time update of the parallel integrator. That is, we manipulate $\mathbf{g}^{n+1} = \textbf{X}^{n+1}\textbf{S}^{n+1}\textbf{V}^{n+1,\top}$ such that $g_{\kappa\ell}^{n+1} \equiv \widehat g_{\varrho^{I}_{\mathrm{B}}(\kappa),\ell}$ for all $\kappa \in\mathcal{K}_{\mathrm{B}}^{I}$ without having to compute and store $\mathbf{g}^{n+1}$. To apply boundary conditions efficiently, we define $\mathbf{\widehat K} = \mathbf{\widehat{g}}\textbf{V}^{n+1}\in\mathbb{R}^{N_{\mathrm{BC}} \times r}$ and $\mathbf{\bar K}\in\mathbb{R}^{N_{x} \times r}$ such that $\bar K_{\kappa\ell} = \widehat K_{\varrho^{I}_{\mathrm{B}}(\kappa),\ell}$ for $\kappa \in\mathcal{K}_{\mathrm{B}}^{I}$ and $\bar K_{j\ell} = \sum_i X_{ji}S_{i\ell}$ for $j\notin \mathcal{K}_{\mathrm{B}}^{I}$. Lastly, the basis $\textbf{X}^{n+1}$ and coefficient $\textbf{S}^{n+1}$ are recomputed as a QR decomposition of $\mathbf{\bar K}$, giving the factorized solution at time $n+1$ with reflective-transmitive boundary values imposed efficiently. 

Note that in an abuse of notation, we did not define an intermediate low-rank solution for the step in between the parallel integrator and imposing boundary conditions. Instead, to simplify notation, we recycle the notation of $\mathbf{g}^{n+1} = \textbf{X}^{n+1}\textbf{S}^{n+1}\textbf{V}^{n+1,\top}$. We also note that the above strategy does not impose boundary conditions exactly, similar to the projected boundary conditions, for example, used in \cite[Section~4]{MR4466549} or \cite{sapsis2009dynamically}. Alternatively, boundary conditions can be imposed exactly through an augmentation of the directional basis as is done for the projector--splitting~\cite{doi:10.1007/s10543-013-0454-0} and fixed-rank BUG\cite{doi:10.1007/s10543-021-00873-0} integrator in~\cite{MR4453266}. However, we found in our numerical experiment that an approximate imposition through a projection is sufficient to obtain accurate results.


With all the above modifications to the parallel BUG integrator we have the following results:


\begin{theorem}[Local mass conservation]
    The proposed macro--micro parallel BUG scheme is locally conservative for zero or periodic boundary conditions. Let the scalar flux at time $t_{n}$ be denoted by $\boldsymbol{\Phi}^{n} = \textbf{\textit{B}}^{n} + \varepsilon^{2}\textbf{\textit{h}}^{n} $, for $n \in\{0,1\}$. Then, the scheme fulfills the discrete conservation law 
    \begin{subequations}
        \begin{displaymath}
             \frac{1}{c\Delta t}\left(\boldsymbol{\Phi}^{n+1}-\boldsymbol{\Phi}^{n} \right) + \frac{1}{2\pi}\sum_{v\in\mathcal{J}_{\textbf{\textit{x}}}}\mathbf{D}^{0}_{v} \normalfont{\textbf{X}^{n+1}\mathrm{\textbf{S}}^{n+1}\mathrm{\textbf{V}}^{n+1,\top}}\textbf{Q}_{v}\textbf{\textit{w}}  = -\boldsymbol{\Sigma}^{a}\textbf{\textit{h}}^{n+1},
        \end{displaymath}
        \begin{displaymath}
            \frac{c_{\nu}}{\Delta t}\left( \textbf{\textit{T}}^{n+1}-\textbf{\textit{T}}^{n} \right) = 2\pi\boldsymbol{\Sigma}^{a}\textbf{\textit{h}}^{n+1},
        \end{displaymath}
    \end{subequations}
\end{theorem}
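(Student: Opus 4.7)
The plan is to observe that, after the parallel integrator (including the reflective–transmissive boundary step described immediately above the statement) produces the updated factors, the discrete equations used to obtain $\textbf{\textit{h}}^{n+1}$ and $\textbf{\textit{T}}^{n+1}$ in the low-rank scheme are \emph{identical in form} to the full-rank updates \eqref{eq:FDNMM1} and \eqref{eq:FDNMM5}: the only change is that the nodal array $\textbf{\textit{g}}^{n+1}_{\kappa,\ell}$ is now read off from the low-rank representation as $g^{n+1}_{\kappa,\ell} = (\textbf{X}^{n+1}\textbf{S}^{n+1}\textbf{V}^{n+1,\top})_{\varrho^{I}(\kappa),\ell}$. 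Consequently the proof amounts to combining the two updates at a fixed $\kappa\in\mathcal{K}^{C}$ and rewriting the resulting scalar equation in matrix–vector form.

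First I would dispatch the temperature identity: the update for $T$ in the low-rank scheme is exactly \eqref{eq:FDNMM5}, applied pointwise at every $\kappa\in\mathcal{K}^{C}$, so assembling the results into the vector $\textbf{\textit{T}}$ gives the second stated identity directly. For the scalar-flux identity, I would next write the pointwise $h$-update \eqref{eq:FDNMM1} with $\textbf{\textit{g}}^{n+1}_{ij}$ replaced by the corresponding column slice of $\textbf{X}^{n+1}\textbf{S}^{n+1}\textbf{V}^{n+1,\top}$, and assemble these equations over all $\kappa\in\mathcal{K}^{C}$ into one matrix--vector equation. Using $\boldsymbol{\Phi}^{n}=\textbf{\textit{B}}^{n}+\varepsilon^{2}\textbf{\textit{h}}^{n}$, the first two discrete-time terms collapse into $\frac{1}{c\Delta t}(\boldsymbol{\Phi}^{n+1}-\boldsymbol{\Phi}^{n})$, which matches the left-hand side of the target identity.

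The remaining step is bookkeeping for the advection term: I would verify that
\begin{displaymath}
\frac{1}{2\pi}\,\textbf{\textit{w}}^{\top}\textbf{Q}_{v}\mathcal{D}^{0}_{v}\textbf{\textit{g}}^{n+1}_{ij}
=\frac{1}{2\pi}\bigl(\mathbf{D}^{0}_{v}\textbf{X}^{n+1}\textbf{S}^{n+1}\textbf{V}^{n+1,\top}\textbf{Q}_{v}\textbf{\textit{w}}\bigr)_{\varrho^{C}(\kappa)},
\end{displaymath}
which is immediate since $\textbf{Q}_{v}$ is diagonal: $\mathbf{D}^{0}_{v}$ acts on the spatial (row) index of $\textbf{g}^{n+1}$ while $\textbf{Q}_{v}\textbf{\textit{w}}$ contracts the directional (column) index, so the two orderings give the same sum. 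Doing this for each $v\in\mathcal{J}_{\textbf{\textit{x}}}$ and summing yields the divergence term of the first stated identity.

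The only subtle point—the one step I would treat with care rather than purely mechanically—is that the reflective–transmissive boundary-imposition step modifies $\textbf{X}^{n+1}$ and $\textbf{S}^{n+1}$ through a re-QR of $\bar{\textbf{K}}$. Under zero or periodic boundary conditions this modification is the identity (ghost-cell values coincide with the interior extension, and $\widehat g = \widetilde g$ at boundary cells), so the product $\textbf{X}^{n+1}\textbf{S}^{n+1}\textbf{V}^{n+1,\top}$ used in the $h$-update is exactly the one produced by the parallel BUG step, and there is no mismatch. With that observation, the two identities follow, which is what the theorem claims; no global summation or telescoping over cells is needed because the conservation law is asserted pointwise.
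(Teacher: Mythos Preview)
Your proposal is correct and rests on the same observation as the paper: the $h$ and $T$ updates in the low-rank scheme are literally \eqref{eq:FDNMM1}--\eqref{eq:FDNMM5} with $g^{n+1}_{\kappa,\ell}$ read off from $\textbf{X}^{n+1}\textbf{S}^{n+1}\textbf{V}^{n+1,\top}$, so the displayed conservation law is just a rewriting of the scheme. The paper's own proof is terser still: it takes the displayed equations as immediate from the scheme definition and then spends its one line on the \emph{global} consequence you deliberately set aside, namely that under zero or periodic boundaries $\sum_{\kappa}\mathcal{D}^{0}_{v}\textbf{\textit{g}}^{n+1,\top}_{\kappa}\textbf{Q}_{v}\textbf{\textit{w}}=0$ by telescoping, so the total mass $m^{n}$ of \eqref{eq:massFull} is conserved. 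Your argument and the paper's are thus complementary rather than different in substance: you make explicit the pointwise derivation the paper leaves implicit, while the paper makes explicit the telescoping step you declare unnecessary for the local statement. Your remark about the boundary-imposition step being the identity under zero/periodic conditions is a correct and useful clarification that the paper does not spell out.
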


\begin{proof}
    Let $\textbf{g}^{n+1} = \textbf{X}^{n+1}\mathrm{\textbf{S}}^{n+1}\mathrm{\textbf{V}}^{n+1,\top} $ and, for $\kappa\in\mathcal{K}^{I}$, let $\textbf{\textit{g}}_{\kappa}^{n+1}$ denote the $\kappa$th row of $\textbf{g}^{n+1}$. Then, by the definition of $\mathcal{D}^{0}_{v}$, $v\in\mathcal{J}_{\textbf{\textit{x}}}$, for zero or periodic boundaries we have $\sum_{\kappa}\mathcal{D}^{0}_{v}\textbf{\textit{g}}_{\kappa}^{n+1,\top}\textbf{Q}_{v}\textbf{\textit{w}} = 0$. Thus $m^{n} $ defined in~\eqref{eq:massFull} is conserved.
\end{proof}

\subsection{AP property}

\begin{theorem}\label{theorem:APparBUG}
    The macro--micro parallel BUG scheme for the thermal radiative transfer equation is asymptotic--preserving. That is, in the limit $\varepsilon\to 0$ the scheme preserves the discrete Rosseland diffusion equation
    \begin{displaymath}
        \frac{c_{\nu}}{\Delta t}\left( \textbf{\textit{T}}^{n+1}-\textbf{\textit{T}}^{n} \right) + \frac{2\pi}{c\Delta t}\left( \textbf{\textit{B}}^{n+1} - \textbf{\textit{B}}^{n} \right) = \frac{2\pi}{3}\left[ \mathbf{D}^{0}_{x}\left( (\boldsymbol{\Sigma}^{t})^{-1}\mathbf{d}^{0}_{x}\textbf{\textit{B}}^{n} \right) + \mathbf{D}^{0}_{y}\left( (\boldsymbol{\Sigma}^{t})^{-1}\mathbf{d}^{0}_{y}\textbf{\textit{B}}^{n} \right) \right].
    \end{displaymath}
    This is a 5-point centered difference discretization for updating the diffusion equation \eqref{eq:RosselandApprox} on a staggered grid with an explicit time discretization. 
\end{theorem}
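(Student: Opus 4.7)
The plan is to reduce the argument to Theorem~\ref{theorem:APFull}: once I establish that in the limit $\varepsilon\to 0$ the low-rank factorization satisfies $\textbf{g}^{n+1}=\textbf{X}^{n+1}\textbf{S}^{n+1}\textbf{V}^{n+1,\top}=-(\boldsymbol{\Sigma}^{t})^{-1}\mathbf{d}^{0}_{\textbf{\textit{x}}}\textbf{\textit{B}}^{n}\mathds{1}^{\top}\textbf{Q}_{\textbf{\textit{x}}}$ as matrices, the updates of $\textbf{\textit{h}}$ and $\textbf{\textit{T}}$ are identical to those of the full-rank macro--micro scheme~\eqref{eq:FDNMM1}--\eqref{eq:FDNMM5}, and the algebraic manipulations producing the discrete Rosseland equation (including the use of the quadrature identities $\textbf{\textit{w}}^{\top}\textbf{Q}_{v}\textbf{Q}_{v'}\mathds{1}=\frac{2\pi}{3}\delta_{vv'}$) carry over verbatim.

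The key preparatory observation is that the pre-augmentation step, followed by orthonormalization, enforces the range inclusions
\begin{displaymath}
    (\boldsymbol{\Sigma}^{t})^{-1}\mathbf{d}^{0}_{v}\textbf{\textit{B}}^{n}\in\mathrm{range}(\textbf{X}^{n}),\qquad \textbf{Q}_{v}\mathds{1}\in\mathrm{range}(\textbf{V}^{n}),\qquad v\in\mathcal{J}_{\textbf{\textit{x}}}.
\end{displaymath}
I would then take $\varepsilon\to 0$ in the parallel K-, L-, and S-step equations~\eqref{eq:NMMparK}--\eqref{eq:NMMparS}. The $\bigO{\varepsilon^{2}/\Delta t}$ temporal terms and the $\bigO{\varepsilon}$ transport contributions drop out, leaving the purely algebraic identities
\begin{displaymath}
    \textbf{K}^{n+1} = -(\boldsymbol{\Sigma}^{t})^{-1}\mathbf{d}^{0}_{\textbf{\textit{x}}}\textbf{\textit{B}}^{n}\mathds{1}^{\top}\textbf{Q}_{\textbf{\textit{x}}}\textbf{V}^{n},
\end{displaymath}
together with analogous expressions for $\textbf{L}^{n+1}$ and $\overline{\textbf{S}}^{n+1}$. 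Using the above inclusions together with $\textbf{V}^{n}\textbf{V}^{n,\top}(\textbf{Q}_{v}\mathds{1})=\textbf{Q}_{v}\mathds{1}$ and $\textbf{X}^{n}\textbf{X}^{n,\top}(\boldsymbol{\Sigma}^{t})^{-1}\mathbf{d}^{0}_{v}\textbf{\textit{B}}^{n}=(\boldsymbol{\Sigma}^{t})^{-1}\mathbf{d}^{0}_{v}\textbf{\textit{B}}^{n}$, each of $\textbf{K}^{n+1}\textbf{V}^{n,\top}$, $\textbf{X}^{n}\textbf{L}^{n+1,\top}$, and $\textbf{X}^{n}\overline{\textbf{S}}^{n+1}\textbf{V}^{n,\top}$ collapses to the target matrix $-(\boldsymbol{\Sigma}^{t})^{-1}\mathbf{d}^{0}_{\textbf{\textit{x}}}\textbf{\textit{B}}^{n}\mathds{1}^{\top}\textbf{Q}_{\textbf{\textit{x}}}$.

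Next I would push this through the augmentation and truncation stages. Since $\widehat{\textbf{X}}$ spans $(\textbf{X}^{n},\textbf{K}^{n+1})$ and $\widehat{\textbf{V}}$ spans $(\textbf{V}^{n},\textbf{L}^{n+1})$, the block structure of $\widehat{\textbf{S}}$ combined with the three limit identities yields $\widehat{\textbf{X}}\widehat{\textbf{S}}\widehat{\textbf{V}}^{\top}=-(\boldsymbol{\Sigma}^{t})^{-1}\mathbf{d}^{0}_{\textbf{\textit{x}}}\textbf{\textit{B}}^{n}\mathds{1}^{\top}\textbf{Q}_{\textbf{\textit{x}}}$. The conservative truncation, modelled on~\cite{EINKEMMER2023112060,doi:10.1137/24M1646303}, is engineered precisely so that the pre-augmented columns $(\boldsymbol{\Sigma}^{t})^{-1}\mathbf{d}^{0}_{v}\textbf{\textit{B}}^{n}$ and $\textbf{Q}_{v}\mathds{1}$ are preserved in $\mathrm{range}(\textbf{X}^{n+1})$ and $\mathrm{range}(\textbf{V}^{n+1})$ respectively, so that $\textbf{g}^{n+1}$ equals the target matrix. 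Substituting this $\textbf{g}^{n+1}$ into the $h$-equation, eliminating $\textbf{\textit{h}}^{n+1}$ using the $T$-equation, and invoking the quadrature moment identities reproduces, step for step, the derivation used in the proof of Theorem~\ref{theorem:APFull} and delivers the claimed discrete Rosseland equation.

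The main obstacle is the rigorous bookkeeping of the conservative truncation: one must verify that the two pre-augmented spatial directions and the two pre-augmented angular directions are retained \emph{exactly} (not merely up to tolerance) by the truncation procedure, and that the corresponding block of $\widehat{\textbf{S}}$ survives unchanged after multiplication by the truncation projectors. This is the point at which the parallel BUG integrator genuinely differs from the augmented BUG integrator of~\cite{doi:10.1137/24M1646303}: because the parallel dynamics are entirely determined by the state at $t_{n}$, the AP structure cannot be injected after the time step and must be baked into $\textbf{X}^{n}$, $\textbf{V}^{n}$ via pre-augmentation, which is what justifies the modified integrator proposed here.
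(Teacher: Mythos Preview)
Your proposal is correct and follows essentially the same route as the paper's proof: pass to the limit $\varepsilon\to 0$ in each of the K-, L-, and S-substeps, exploit the range inclusions supplied by the pre-augmentation to show that the augmented product $\widehat{\textbf{X}}\widehat{\textbf{S}}\widehat{\textbf{V}}^{\top}$ equals the full-rank limit $-(\boldsymbol{\Sigma}^{t})^{-1}\mathbf{d}^{0}_{\textbf{\textit{x}}}\textbf{\textit{B}}^{n}\mathds{1}^{\top}\textbf{Q}_{\textbf{\textit{x}}}$, invoke the conservative truncation of~\cite[Theorem~4.5]{doi:10.1137/24M1646303} to carry this over to $\textbf{g}^{n+1}$, and then fall back on the argument of Theorem~\ref{theorem:APFull}.

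One detail you should make explicit when filling in the L- and S-steps: unlike the K-step, the implicit absorption term there is the \emph{projected} operator $\textbf{X}^{n,\top}\boldsymbol{\Sigma}^{t}\textbf{X}^{n}$, not $\boldsymbol{\Sigma}^{t}$ itself, so the limit relation reads $(\textbf{X}^{n,\top}\boldsymbol{\Sigma}^{t}\textbf{X}^{n})\widetilde{\textbf{S}}^{L}=-\textbf{X}^{n,\top}\mathbf{d}^{0}_{\textbf{\textit{x}}}\textbf{\textit{B}}^{n}\mathds{1}^{\top}\textbf{Q}_{\textbf{\textit{x}}}\widetilde{\textbf{V}}^{n+1}$. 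The paper resolves this by the identity $\textbf{X}^{n,\top}\mathbf{d}^{0}_{v}\textbf{\textit{B}}^{n}=(\textbf{X}^{n,\top}\boldsymbol{\Sigma}^{t}\textbf{X}^{n})\,\textbf{X}^{n,\top}(\boldsymbol{\Sigma}^{t})^{-1}\mathbf{d}^{0}_{v}\textbf{\textit{B}}^{n}$, which is precisely where the pre-augmented range inclusion $(\boldsymbol{\Sigma}^{t})^{-1}\mathbf{d}^{0}_{v}\textbf{\textit{B}}^{n}\in\mathrm{range}(\textbf{X}^{n})$ enters; this is the one non-obvious algebraic step your phrase ``analogous expressions'' hides.
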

\begin{proof}
    As $\varepsilon\to 0$, since $\widetilde{\textbf{S}}^{K} = \widetilde{\textbf{X}}^{n+1,\top}\textbf{K}^{n+1} $, we get from the K-step~\eqref{eq:NMMparK}
    \begin{displaymath}
         \widetilde{\textbf{S}}^{K} = -\widetilde{\textbf{X}}^{n+1,\top}(\boldsymbol{\Sigma}^{t})^{-1}\left( \mathbf{d}^{0}_{x}B(\textbf{\textit{T}})\mathds{1}^{\top}\textbf{Q}_{x} + \mathbf{d}^{0}_{y}B(\textbf{\textit{T}})\mathds{1}^{\top}\textbf{Q}_{y} \right)\textbf{V}^{n}.
    \end{displaymath}
   Similarly, as $\varepsilon \to 0$ in the L-step~\eqref{eq:NMMparL} we get
   \begin{displaymath}
       \left(\textbf{X}^{n,\top}\boldsymbol{\Sigma}^{t}\textbf{X}^{n} \right)\widetilde{\textbf{S}}^{L} = -\textbf{X}^{n,\top}\left( \mathbf{d}^{0}_{x}B(\textbf{\textit{T}})\mathds{1}^{\top}\textbf{Q}_{x} + \mathbf{d}^{0}_{y}B(\textbf{\textit{T}})\mathds{1}^{\top}\textbf{Q}_{y} \right)\widetilde{\textbf{V}}^{n+1}.
   \end{displaymath}
   By construction, $(\boldsymbol{\Sigma}^{t})^{-1}\mathbf{d}^{0}_{x}B(\textbf{\textit{T}}^{n}),(\boldsymbol{\Sigma}^{t})^{-1}\mathbf{d}^{0}_{y}B(\textbf{\textit{T}}^{n}) \in \mathrm{range}(\textbf{X}^{n})$ which implies for $v\in\mathcal{J}_{\textbf{\textit{x}}}$
   \begin{displaymath}
       \textbf{X}^{n,\top}\mathbf{d}^{0}_{v}B(\textbf{\textit{T}}^{n}) = \textbf{X}^{n,\top}\boldsymbol{\Sigma}^{t}\textbf{X}^{n}\textbf{X}^{n,\top}(\boldsymbol{\Sigma}^{t})^{-1}\mathbf{d}^{0}_{v}B(\textbf{\textit{T}}^{n}),
   \end{displaymath}
   and thus if we assume that $\left(\textbf{X}^{n,\top}\boldsymbol{\Sigma}^{t}\textbf{X}^{n} \right)$ has full rank
   \begin{displaymath}
       \widetilde{\textbf{S}}^{L} = -\textbf{X}^{n,\top}(\boldsymbol{\Sigma}^{t})^{-1}\left( \mathbf{d}^{0}_{x}B(\textbf{\textit{T}})\mathds{1}^{\top}\textbf{Q}_{x} + \mathbf{d}^{0}_{y}B(\textbf{\textit{T}})\mathds{1}^{\top}\textbf{Q}_{y} \right)\widetilde{\textbf{V}}^{n+1}.
   \end{displaymath}
   From the S-step~\eqref{eq:NMMparS} we get, as $\varepsilon \to 0$,
   \begin{displaymath}
       \overline{\textbf{S}}^{n+1} = -\textbf{X}^{n,\top}(\boldsymbol{\Sigma}^{t})^{-1}\left( \mathbf{d}^{0}_{x}B(\textbf{\textit{T}})\mathds{1}^{\top}\textbf{Q}_{x} + \mathbf{d}^{0}_{y}B(\textbf{\textit{T}})\mathds{1}^{\top}\textbf{Q}_{y} \right)\textbf{V}^{n}.
   \end{displaymath}
   Then, 
   \begin{align*}
       \widehat{\textbf{X}}\widehat{\textbf{S}}\widehat{\textbf{V}}^{\top} &= \begin{bmatrix}
           \textbf{X}^{n}& \widetilde{\textbf{X}}^{n}
       \end{bmatrix}\begin{bmatrix}
           \overline{\textbf{S}}^{n+1} & \widetilde{\textbf{S}}^{L}\\
           \widetilde{\textbf{S}}^{K} & \textbf{0}
       \end{bmatrix}
       \begin{bmatrix}
           \textbf{V}^{n,\top}\\\widetilde{\textbf{V}}^{n,\top}
       \end{bmatrix}\\
       &= -\textbf{X}^{n}\textbf{X}^{n,\top}(\boldsymbol{\Sigma}^{t})^{-1}\left( \mathbf{d}^{0}_{x}B(\textbf{\textit{T}})\mathds{1}^{\top}\textbf{Q}_{x} + \mathbf{d}^{0}_{y}B(\textbf{\textit{T}})\mathds{1}^{\top}\textbf{Q}_{y} \right)\widehat{\textbf{V}}^{n}\widehat{\textbf{V}}^{n,\top}\\
       &\quad -\widetilde{\textbf{X}}^{n+1}\widetilde{\textbf{X}}^{n+1,\top}(\boldsymbol{\Sigma}^{t})^{-1}\left( \mathbf{d}^{0}_{x}B(\textbf{\textit{T}})\mathds{1}^{\top}\textbf{Q}_{x} + \mathbf{d}^{0}_{y}B(\textbf{\textit{T}})\mathds{1}^{\top}\textbf{Q}_{y} \right)\textbf{V}^{n}\textbf{V}^{n,\top}.
   \end{align*}
   Since $\textbf{Q}_{x}\mathds{1},\textbf{Q}_{y}\mathds{1} \in \mathrm{range}(\textbf{V}^{n})$ and $\textbf{Q}_{x}\mathds{1},\textbf{Q}_{y}\mathds{1} \in \mathrm{range}(\widehat{\textbf{V}})$, we get
   \begin{align*}
        \widehat{\textbf{X}}\widehat{\textbf{S}}\widehat{\textbf{V}}^{\top} &= -\textbf{X}^{n}\textbf{X}^{n,\top}(\boldsymbol{\Sigma}^{t})^{-1}\left( \mathbf{d}^{0}_{x}B(\textbf{\textit{T}})\mathds{1}^{\top}\textbf{Q}_{x} + \mathbf{d}^{0}_{y}B(\textbf{\textit{T}})\mathds{1}^{\top}\textbf{Q}_{y} \right)\\
       &\quad -\widetilde{\textbf{X}}^{n+1}\widetilde{\textbf{X}}^{n+1,\top}(\boldsymbol{\Sigma}^{t})^{-1}\left( \mathbf{d}^{0}_{x}B(\textbf{\textit{T}})\mathds{1}^{\top}\textbf{Q}_{x} + \mathbf{d}^{0}_{y}B(\textbf{\textit{T}})\mathds{1}^{\top}\textbf{Q}_{y} \right)\\
       &= -(\boldsymbol{\Sigma}^{t})^{-1}\left( \mathbf{d}^{0}_{x}B(\textbf{\textit{T}})\mathds{1}^{\top}\textbf{Q}_{x} + \mathbf{d}^{0}_{y}B(\textbf{\textit{T}})\mathds{1}^{\top}\textbf{Q}_{y} \right),
   \end{align*}
   where we get the last equality since $(\boldsymbol{\Sigma}^{t})^{-1}\mathbf{d}^{0}_{x}B(\textbf{\textit{T}}),(\boldsymbol{\Sigma}^{t})^{-1}\mathbf{d}^{0}_{y}B(\textbf{\textit{T}})\in\mathrm{range}(\widehat{\textbf{X}}) $.

   It was shown in \cite[Theorem~4.5]{doi:10.1137/24M1646303} that the conservative truncation preserves the range space of the augmented spatial and angular basis. Thus, as $\varepsilon\to 0$ we get 
   \begin{displaymath}
       \textbf{g}^{n+1} = \textbf{X}^{n+1}\textbf{S}^{n+1}\textbf{V}^{n+1,\top} = -(\boldsymbol{\Sigma}^{t})^{-1}\left( \mathbf{d}^{0}_{x}B(\textbf{\textit{T}})\mathds{1}^{\top}\textbf{Q}_{x} + \mathbf{d}^{0}_{y}B(\textbf{\textit{T}})\mathds{1}^{\top}\textbf{Q}_{y} \right).
   \end{displaymath}
   The rest of the proof follows on the lines of \Cref{theorem:APFull} with $g_{\kappa,\ell}^{n+1} = (\textbf{g}^{n+1})_{\varrho(\kappa),\ell}$, for $\kappa\in\mathcal{K}^{I}$ and $\ell=1,\ldots,N_{q}$.
\end{proof}


\subsection{Energy stability}

\begin{theorem}
    For a given spatial grid size $\Delta x, \Delta y$, let the step size $\Delta t$ satisfy the CFL condition \eqref{eq:CFLcond} for all $\Omega_{x}^{\ell},\Omega_{y}^{\ell}\neq 0$. Then, the proposed macro--micro parallel BUG scheme is energy stable, i.e.
    \begin{equation*}    
        e^{n+1} \leq e^{n},
    \end{equation*}
    where the energy is defined as
        \begin{displaymath}
        e^{n} = \norm{\frac{1}{c}B^{n} + \frac{\varepsilon^{2}}{c}h^{n}}^{2} + \frac{1}{2\pi}\norm{\frac{\varepsilon}{c}\normalfont{\textbf{X}^{n}\textbf{S}^{n}\textbf{V}^{n,\top}}}^{2} + \frac{2}{5}\norm{\frac{\sqrt{ac_{\nu}}}{2\pi}(T^{n})^{5/2}}^{2}.
    \end{displaymath}
\end{theorem}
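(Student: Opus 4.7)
The plan is to adapt the full-rank energy argument of \Cref{thm:StabNMM} to the parallel BUG setting, exploiting the fact that each sub-step of the parallel integrator is a Galerkin projection of the semi-discrete micro equation \eqref{eq:MicTimeEvol}: the K-step \eqref{eq:NMMparK} projects onto $\mathrm{range}(\textbf{V}^{n})$ in the angular variable, the L-step \eqref{eq:NMMparL} projects onto $\mathrm{range}(\textbf{X}^{n})$ in space, and the S-step \eqref{eq:NMMparS} projects on both sides. The pre-augmentation step guarantees that $(\boldsymbol{\Sigma}^{t})^{-1}\mathbf{d}^{0}_{v}B(\textbf{\textit{T}}^{n})\in\mathrm{range}(\textbf{X}^{n})$ and $\textbf{Q}_{v}\mathds{1}\in\mathrm{range}(\textbf{V}^{n})$ for $v\in\mathcal{J}_{\textbf{\textit{x}}}$, so the Planckian advection source $\mathbf{d}^{0}_{v}B(\textbf{\textit{T}}^{n})\mathds{1}^{\top}\textbf{Q}_{v}$ is representable exactly on the ansatz subspaces; every projected sub-equation therefore inherits the precise structural features used in the full-rank proof.

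First, using the orthonormality of $[\textbf{X}^{n},\widetilde{\textbf{X}}^{n+1}]$ and $[\textbf{V}^{n},\widetilde{\textbf{V}}^{n+1}]$ together with $\widetilde{\textbf{S}}^{K}=\widetilde{\textbf{X}}^{n+1,\top}\textbf{K}^{n+1}$ and $\widetilde{\textbf{S}}^{L}=\textbf{L}^{n+1,\top}\widetilde{\textbf{V}}^{n+1}$, I would decompose the augmented micro norm in Pythagorean form
\begin{displaymath}
\bigl\|\tfrac{\varepsilon}{c}\widehat{\textbf{X}}\widehat{\textbf{S}}\widehat{\textbf{V}}^{\top}\bigr\|^{2} = \bigl\|\tfrac{\varepsilon}{c}\overline{\textbf{S}}^{n+1}\bigr\|_{F}^{2} + \bigl\|\tfrac{\varepsilon}{c}\widetilde{\textbf{S}}^{K}\bigr\|_{F}^{2} + \bigl\|\tfrac{\varepsilon}{c}\widetilde{\textbf{S}}^{L}\bigr\|_{F}^{2},
\end{displaymath}
with the angular $\textbf{M}$-weight absorbed into the factors attached to $\textbf{V}^{n}$ and $\widetilde{\textbf{V}}^{n+1}$. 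Each of the three terms is then bounded by testing the respective sub-equation: the K-equation against $\textbf{K}^{n+1}$ with angular weight $\textbf{V}^{n,\top}\textbf{M}^{2}\textbf{V}^{n}$, the L-equation against $\textbf{L}^{n+1}$, and the S-equation against $\overline{\textbf{S}}^{n+1}$. In each case the centered part of the advection operator cancels against the corresponding contribution in the $\textbf{\textit{h}}$-update \eqref{eq:FDNMM1}, while the upwind numerical-dissipation part is absorbed by the implicit absorption and scattering losses under the CFL condition \eqref{eq:CFLcond}. The nonlinear Stefan--Boltzmann coupling is handled exactly as in the full-rank case via the convexity inequality $5(T^{n})^{4}(T^{n+1}-T^{n})\leq (T^{n+1})^{5}-(T^{n})^{5}$, which produces the $\tfrac{2}{5}\|(\cdot)^{5/2}\|^{2}$ term in $e^{n+1}$.

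Combining the three sub-step estimates with the discrete balances \eqref{eq:FDNMM1} for $\textbf{\textit{h}}$ and \eqref{eq:FDNMM5} for $\textbf{\textit{T}}$ yields $e^{n+1}_{\mathrm{aug}}\leq e^{n}$, where $e^{n+1}_{\mathrm{aug}}$ denotes the energy evaluated with $\widehat{\textbf{X}}\widehat{\textbf{S}}\widehat{\textbf{V}}^{\top}$ in place of $\textbf{X}^{n+1}\textbf{S}^{n+1}\textbf{V}^{n+1,\top}$. The argument is closed by invoking the conservative truncation step: because $(\textbf{X}^{n+1},\textbf{S}^{n+1},\textbf{V}^{n+1})$ is obtained from a truncated SVD of $\widehat{\textbf{S}}$, one has $\|\textbf{S}^{n+1}\|_{F}\leq\|\widehat{\textbf{S}}\|_{F}$, and the conservative variant of \cite{doi:10.1137/24M1646303,EINKEMMER2023112060} keeps the pre-augmented directions $(\boldsymbol{\Sigma}^{t})^{-1}\mathbf{d}^{0}_{v}B(\textbf{\textit{T}}^{n})$ and $\textbf{Q}_{v}\mathds{1}$ inside the range of the new basis, so the truncation does not break the advection--absorption balance used in the estimate. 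Taken together, $e^{n+1}\leq e^{n+1}_{\mathrm{aug}}\leq e^{n}$.

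The main obstacle is the recombination of the three sub-updates: unlike the augmented BUG scheme, whose enlarged S-step mimics a full-rank update directly, the parallel BUG decouples the evolution into three subspace updates that must be reassembled without producing cross-terms uncontrolled by \eqref{eq:CFLcond}. The pre-augmentation step is precisely what makes this recombination possible, since it guarantees that the advective source $\mathbf{d}^{0}_{v}B(\textbf{\textit{T}}^{n})\mathds{1}^{\top}\textbf{Q}_{v}$ does not leak energy into modes outside $\widehat{\textbf{X}}$ or $\widehat{\textbf{V}}$; once this is secured, the remainder of the estimate is a direct transcription of the proof of \Cref{thm:StabNMM} onto the projected dynamics.
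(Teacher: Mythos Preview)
Your route is different from the paper's, and the difference matters. The paper does \emph{not} test the K-, L-, and S-equations separately and recombine them. Instead it argues in two steps: (i) the macro--micro \emph{augmented} BUG scheme is energy stable, which follows from \Cref{thm:StabNMM} together with \cite[Theorem~4.6]{doi:10.1137/24M1646303} (the augmented integrator's enlarged S-step is a single Galerkin projection of the full-rank $g$-update onto $\mathrm{range}(\widehat{\textbf{X}})\otimes\mathrm{range}(\widehat{\textbf{V}})$, so the full-rank cancellations transfer verbatim); and (ii) the norm comparison $\|\textbf{g}^{n+1}_{p}\|\leq\|\textbf{g}^{n+1}_{a}\|$ between parallel and augmented BUG from \cite[Section~3.2]{ceruti2023parallel}. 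The detour through the augmented integrator is precisely what avoids the recombination problem you flag at the end of your proposal.

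Your direct approach has a real gap at the point you yourself identify. The $h$-update \eqref{eq:FDNMM1} contains a \emph{single} advection term $\tfrac{1}{2\pi}\textbf{\textit{w}}^{\top}\textbf{Q}_{v}\mathcal{D}^{0}_{v}\textbf{\textit{g}}^{n+1}$ built from the \emph{truncated} low-rank solution $\textbf{g}^{n+1}=\textbf{X}^{n+1}\textbf{S}^{n+1}\textbf{V}^{n+1,\top}$, whereas testing the three sub-equations \eqref{eq:NMMparK}--\eqref{eq:NMMparS} produces three advective source terms involving $\textbf{K}^{n+1}\textbf{V}^{n,\top}$, $\textbf{X}^{n}\textbf{L}^{n+1,\top}$, and $\textbf{X}^{n}\overline{\textbf{S}}^{n+1}\textbf{V}^{n,\top}$ respectively. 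None of these is $\textbf{g}^{n+1}$, nor is $\widehat{\textbf{X}}\widehat{\textbf{S}}\widehat{\textbf{V}}^{\top}$ their sum; so the claimed cancellation ``in each case \dots against the corresponding contribution in the $h$-update'' does not go through as stated. The pre-augmentation helps with the AP limit but does not by itself force the three projected advection terms to reassemble into the single term appearing in \eqref{eq:FDNMM1}. Two smaller points: your Pythagorean decomposition is stated for the Frobenius norm, but the energy uses the $\textbf{M}$-weighted norm $\|\cdot\|_{\Omega}$, and $\widehat{\textbf{V}}$ is constructed via ordinary QR, not $\textbf{M}$-orthonormalisation; and the convexity inequality you quote should read $(T^{n+1})^{4}(T^{n+1}-T^{n})\geq\tfrac{1}{5}\bigl((T^{n+1})^{5}-(T^{n})^{5}\bigr)$, since the $T$-equation is tested against $B^{n+1}\propto(T^{n+1})^{4}$ (cf.\ \Cref{lemma:AuxLemma7}).
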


\begin{remark}
    The proof of this theorem is based on the proof of Theorem~\ref{thm:StabNMM} and the energy stability for the AP augmented BUG integrator for thermal radiative transfer equations in slab geometry in \cite[Theorem~4.6]{doi:10.1137/24M1646303}. Note that energy stability in \cite[Theorem~4.6]{doi:10.1137/24M1646303} is for an associated linearized problem and thus the energy stability (Theorem~\ref{thm:StabNMM}) of the fully nonlinear problem~\eqref{eq:RTE} proved in this paper is a key ingredient. The extension to the higher dimensional setting \eqref{eq:RTE} and the change of angular discretization don't affect the stability of the augmented BUG integrator. Thus, the macro--micro augmented BUG scheme is energy stable for the nonlinear closure given by the Stefan-Boltzmann law in the sense of \Cref{thm:StabNMM}.

    The discussion in \cite[Section~3.2]{ceruti2023parallel} shows that if $\textbf{g}^{n+1}_{p} = \textbf{X}^{n+1}_{p}\textbf{S}^{n+1}_{p}\textbf{V}^{n+1,\top}_{p} $ is the solution of the parallel BUG at time $t_{n}$ and $\textbf{g}^{n+1}_{a} = \textbf{X}^{n+1}_{a}\textbf{S}^{n+1}_{a}\textbf{V}^{n+1,\top}_{a} $ is the solution obtained from the augmented BUG integrator \cite{ceruti2024robust}, then $\norm{\textbf{g}^{n+1}_{p}} \leq \norm{\textbf{g}^{n+1}_{a}}$. 
    
    Thus, combining the above two arguments, it is straightforward to show that the macro--micro parallel BUG scheme is energy stable for the nonlinear closure given by the Stefan-Boltzmann law. Since the arguments have already been introduced in previous works, the details of re-writing the proof have been omitted and the readers are referred to \cite{doi:10.1137/24M1646303} and \cite{ceruti2023parallel} for further details.
\end{remark}

\section{Numerical experiments} \label{section:Numericalresults}
This section presents numerical examples to verify the theoretical properties of the integrators and demonstrate their efficiency. In all examples, the initial condition is specified for the material temperature while the particle density is assumed to be at an equilibrium. That is, the initial particle density is set as $f_{I}(\textbf{\textit{x}},\boldsymbol{\Omega}) = \frac{ac}{2\pi}T_{I}(\textbf{\textit{x}})^{4}$. The truncation tolerance, $\vartheta$, of the macro--micro parallel BUG scheme is set as $\vartheta = 10^{-2}\lVert \boldsymbol{\Sigma} \rVert_{2}$ in all experiments, where $\boldsymbol{\Sigma}$ contains the singular values of the coefficient matrix. We are mainly interested in the case where the absorption effects are dominant and thus we set $\sigma^{s} = 0$ in all the test cases. We define the radiation temperature as $T_{\mathrm{rad}} \coloneqq (2\pi(B(T)+\varepsilon^{2}h)/ac)^{1/4}$. All codes used to generate the results are openly available in \cite{CodesForPaper}.

Throughout this section, we refer to the full-rank macro--micro scheme \eqref{eq:FDNMM} as the full solver/ integrator, and the corresponding material temperature, scalar flux, and radiation temperatures are denoted by $T^{\mathrm{Full}}$, $\phi^{\mathrm{Full}}$, and $T_{\mathrm{rad}}^{\mathrm{Full}}$, respectively. The macro--micro parallel BUG scheme described in \Cref{section:ParallelDLRAforNMM} is referred to as the parallel BUG solver/ integrator and $T^{\vartheta}$, $\phi^{\vartheta}$, and $T_{\mathrm{rad}}^{\vartheta}$ denote the corresponding material temperature, scalar flux, and radiation temperature, respectively, for a given tolerance $\vartheta$. To study the behavior of our scheme in the asymptotic limit $\varepsilon\to 0$, we compare our schemes to the solution of the Rosseland equation \eqref{eq:RosselandApprox} which is denoted by $T^{R}$. 

\paragraph{AP property}

The first test case is set up to study the AP property and energy decay of the proposed integrators and uses the parameters described in \cite{10.5445/IR/1000154134}. The details are summarized in \Cref{tab:settingsHeatedWall}. The test case is defined for the spatial domain $\textbf{\textit{x}}\in[0,0.002]^{2}$ with material density $\rho=0.01$ g cm$^{-3}$ and the temperature is initially distributed as a Gaussian centered at $ \textbf{\textit{x}}_{0} = (0.001,0.001) $, i.e.
\begin{displaymath}
    T_{I}(\textbf{\textit{x}}) = \frac{1}{2\pi\sigma^{2}}\cdot\exp{\left(-\frac{\norm{\textbf{\textit{x}} - \textbf{\textit{x}}_{0}}_{2}^{2}}{2\sigma^2}\right)},
\end{displaymath}
 where $\sigma = 10^{-4}$. Furthermore, $ T_{I}(\textbf{\textit{x}}) $ is re-scaled such that the maximum temperature is $80$ eV and the cut-off minimum is $0.02~\mathrm{eV}$. The initially distributed particles move in all directions, and as time progresses, they heat the background material. A temperature heat front, traveling outwards from the center of the domain, develops in the material resulting in further emission of particles. The material temperature at the boundary is kept at a constant temperature of $0.02 ~\mathrm{eV}$. 

\begin{table}[htbp!]
    \centering
    \begin{tabular}{|c|c|}
    \hline
         number of spatial cells, $N_{x},N_{y}$ & 52,52\\
         quadrature order, $q$& 30\\
         absorption coefficient, $\sigma^{a}$& $10,799.13607$ cm$^{-1}$\\
         speed of light, $c$ & $2.99792458\cdot 10^{10}$ cm s$^{-1}$ \\
         radiation constant, $a$& $7.565766\cdot 10^{-15}$ erg cm$^{-3}$K$^{-4}$ \\
         specific heat, $c_{\nu}$& $0.831\cdot 10^{5} $ J g$^{-1}$K$^{-1}$\\
         \hline
         
    \end{tabular}
    \caption{Material constants and settings for the Gaussian and Marshak wave test case as given in \cite{10.5445/IR/1000154134}.}
    \label{tab:settingsHeatedWall}
\end{table}

To demonstrate the AP property of the full-rank and parallel BUG integrator we compute the solution at $t_{\mathrm{end}} = 5~\mathrm{ps}$ for Knudsen numbers $\varepsilon \in \{1,5\cdot 10^{-1}, 10^{-1},5\cdot 10^{-2},10^{-2},10^{-3},10^{-4}\}$. Note that simulations with $\varepsilon = 1$ correspond to the kinetic regime while $\varepsilon=0.0001$ correspond to the diffusive regime. The solution of the integrators is compared to the solution of the Rosseland equation at $t_{\mathrm{end}}$. The relative error between the material temperature of both the integrators and the Rosseland equation at $t_{\mathrm{end}}$ is plotted in Figure~\ref{fig:APproperty}. We see that as $\varepsilon\to 0$, the solution of the full solver and the parallel BUG solver converge to the Rosseland equation. Additionally, from Figure~\ref{fig:Energydecay} we see that energy decays for both the full solver and the parallel BUG solver in the absence of a source term.

\begin{figure}[htbp!]
    \centering
    \begin{subfigure}[t]{0.45\linewidth}
    \centering
        \includegraphics[width=\linewidth]{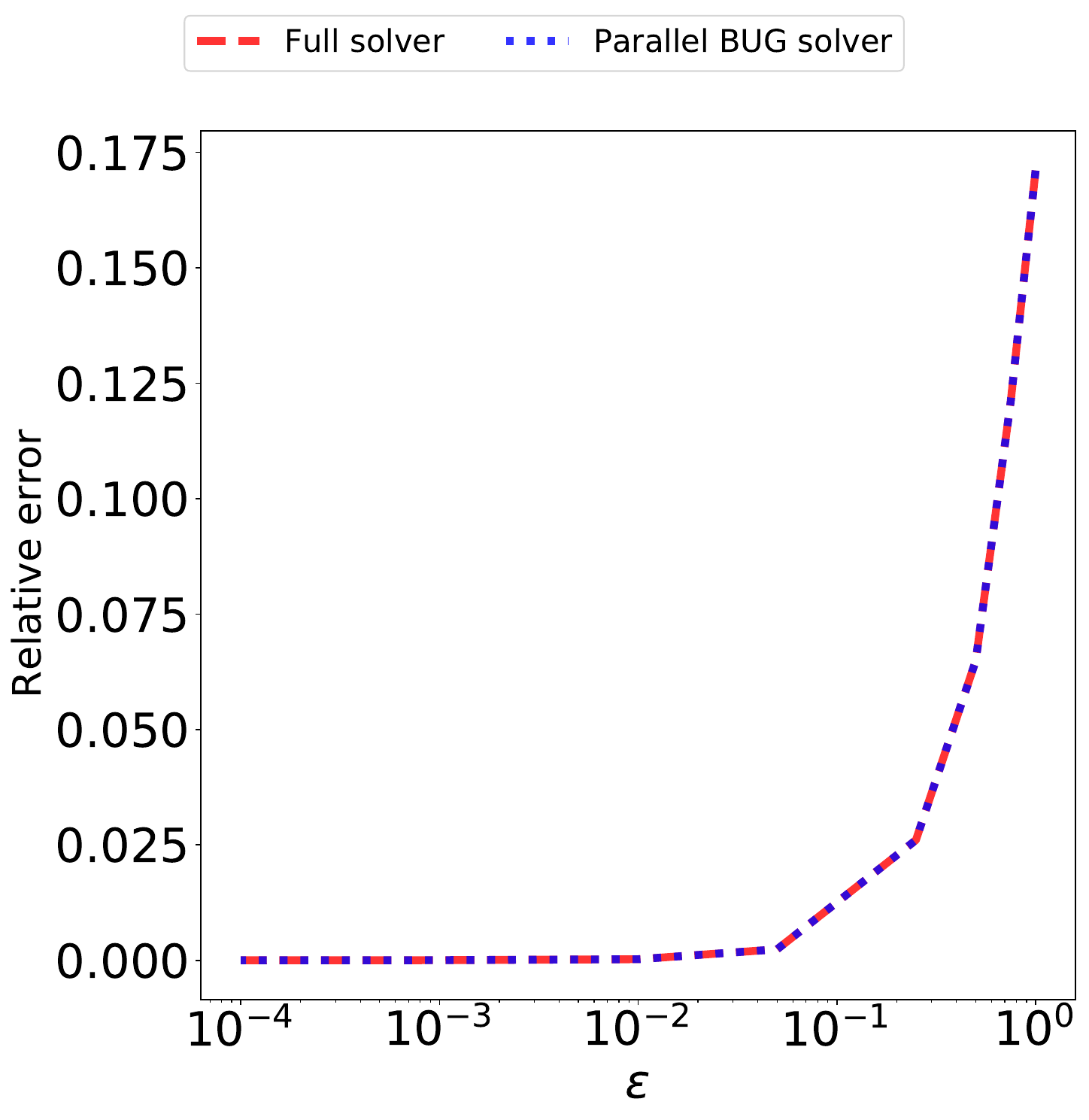}
        \caption{AP property}
        \label{fig:APproperty}
    \end{subfigure}
    \begin{subfigure}[t]{0.45\linewidth}
    \centering
        \includegraphics[width=\linewidth]{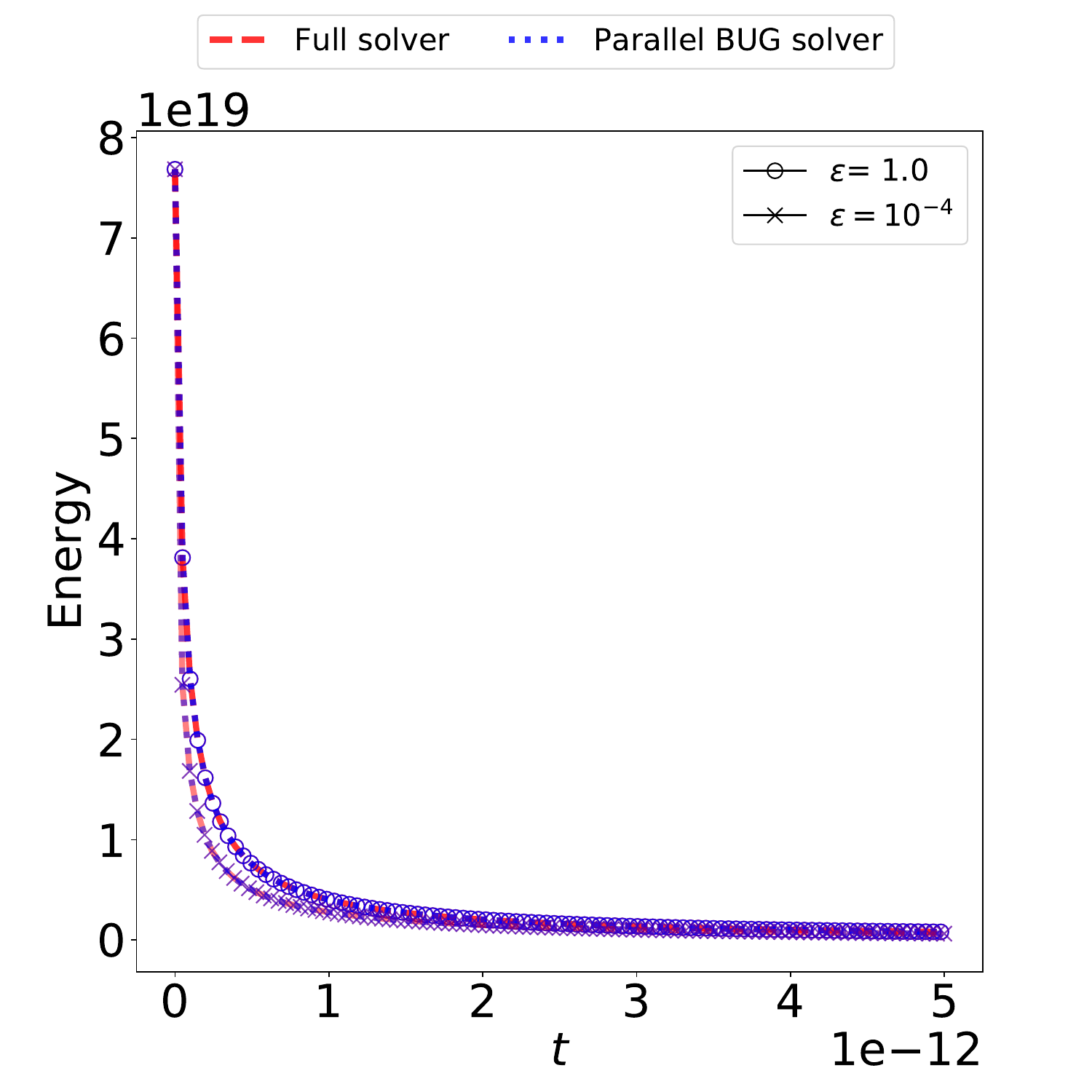}
        \caption{Energy decay over time}
        \label{fig:Energydecay}
    \end{subfigure}
    \caption{Left: Relative error of $T^{\mathrm{Full}}$ and $T^{\vartheta}$ compared to $T^{R}$ for $\varepsilon\in \{1,5\cdot 10^{-1}, 10^{-1},5\cdot 10^{-2},10^{-2},10^{-3},10^{-4}\}$. Right: Energy decay of the full solver and parallel BUG solver for $\varepsilon\in\{1.0,10^{-4}\}$.}
    \label{fig:GaussianTestcase}
\end{figure}

\paragraph{Marshak wave}

The Marshak wave test case is a two-dimensional extension of the test case presented in \cite{10.5445/IR/1000154134} for the one-dimensional thermal radiative transfer equations. The spatial domain and other parameters are the same as in the previous test case and are given in \Cref{tab:settingsHeatedWall}. The initial temperature of the material is $ 0.02~\mathrm{eV}$ and a constant temperature source of $80~\mathrm{eV}$ is applied to the left wall which is switched on at the initial time while the remaining boundaries are maintained at $0.02~\mathrm{eV}$.

As time progresses, particles stream into the domain from the left wall and a temperature heat front traveling to the right wall develops. For $\varepsilon = 1.0$, the cross-section of the material temperature and scalar flux through $y = 0.001$ at times $1,2,3,4,$ and $5~\mathrm{ps}$  are plotted in \Cref{fig:HeatedWall}. Additionally, for $\varepsilon=10^{-4}$ we plot the material temperature and scalar flux through $y = 0.001$ at $5~\mathrm{ps}$ for the full solver, parallel BUG solver and the Rosseland equation in Figure~\ref{fig:DiffLimit}. We see that the full solver and parallel BUG solver both accurately capture the Rosseland diffusion limit.

\begin{figure}[htbp!]
    \centering
    \begin{subfigure}[t]{0.45\linewidth}
    \centering
        \includegraphics[width=0.9\linewidth]{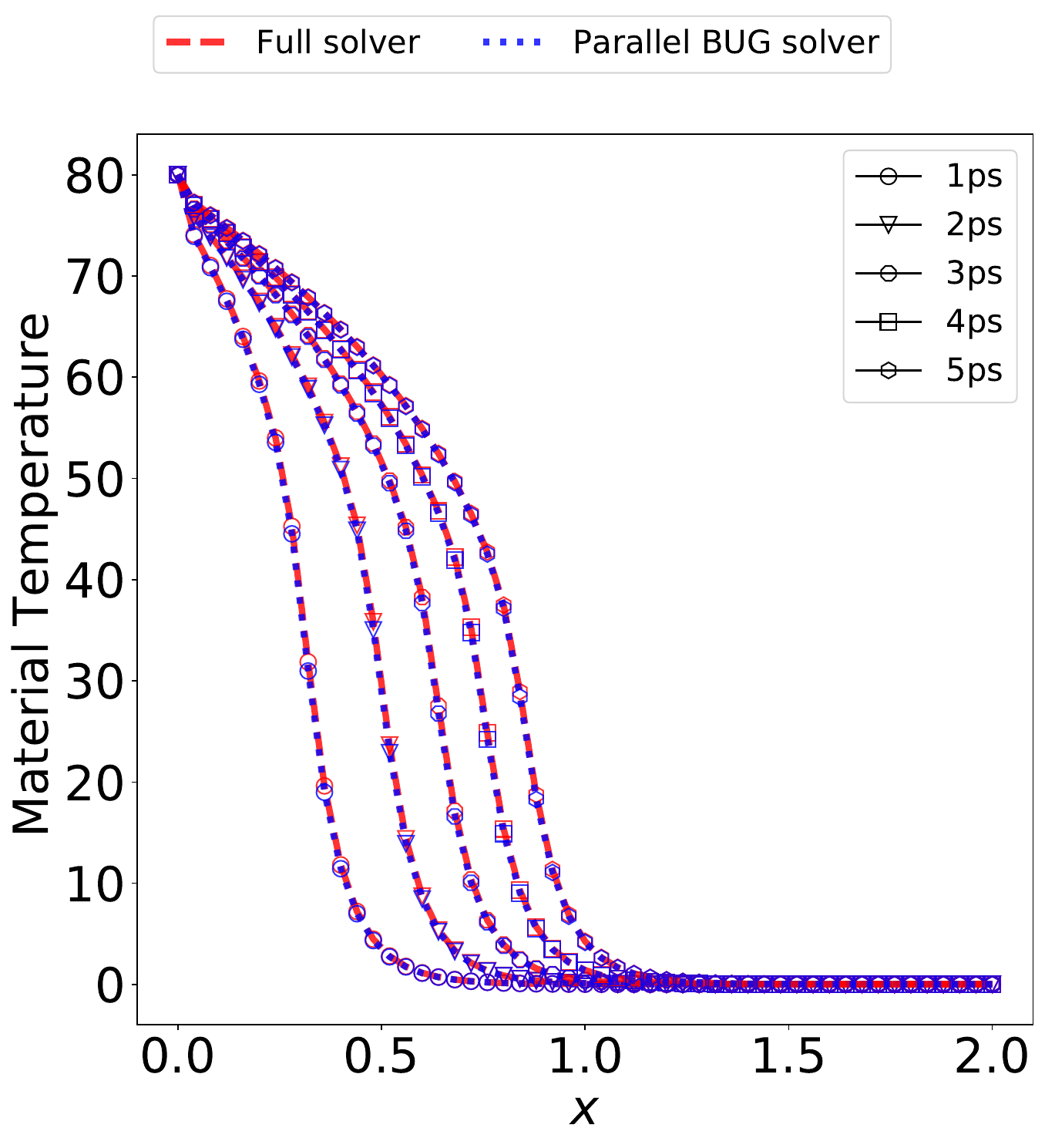}
    \end{subfigure}
    \begin{subfigure}[t]{0.45\linewidth}
    \centering
        \includegraphics[width=0.9\linewidth]{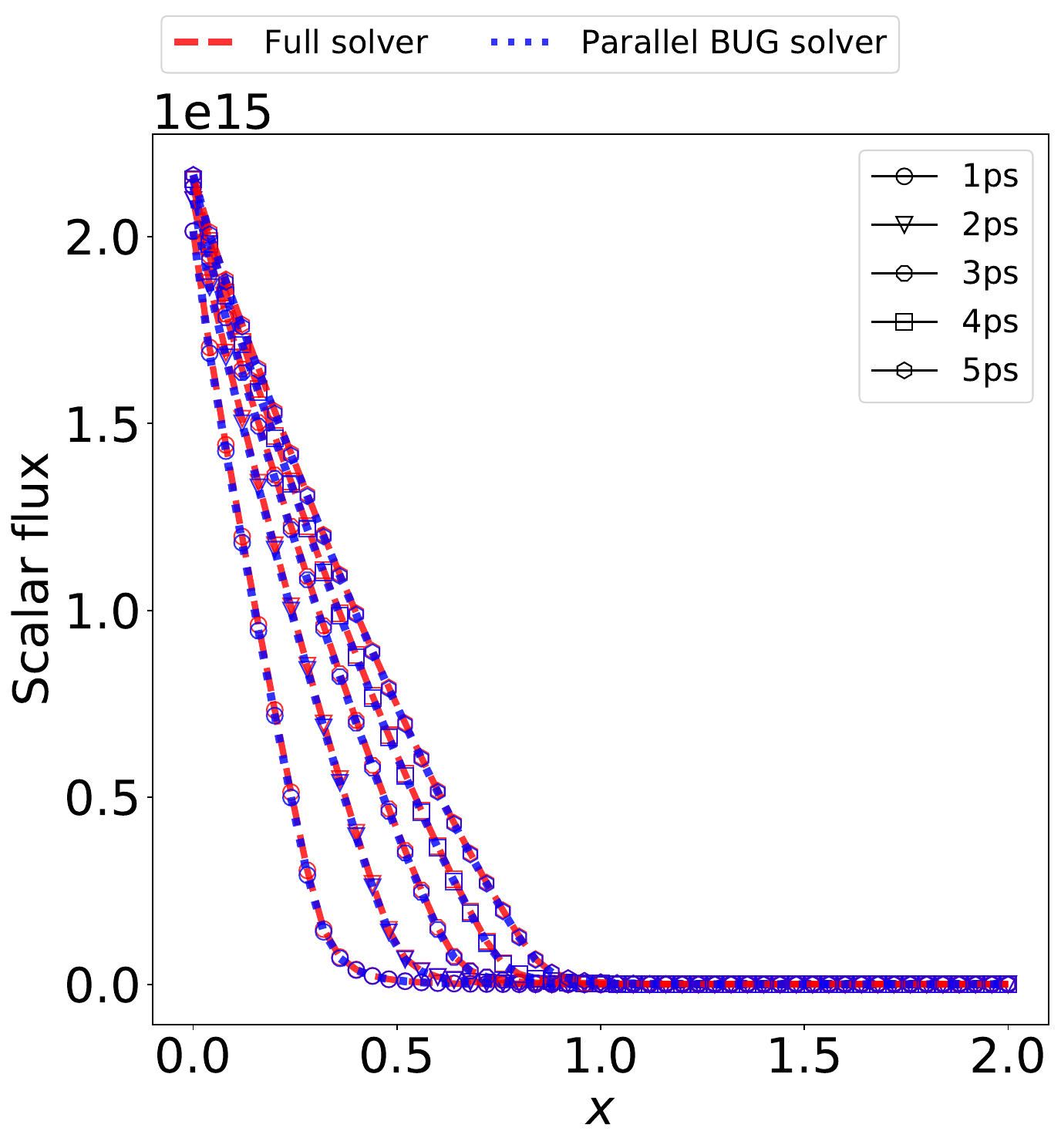}
    \end{subfigure}
    \caption{Cross-section of the material and scalar flux for the Marshak wave test case at times $1$, $2$, $3$, $4$, and $5$ ps through $y=0.001$ for $\varepsilon=1.0$.}
    \label{fig:HeatedWall}
\end{figure}

\begin{figure}[htbp!]
    \centering
    \begin{subfigure}[t]{0.45\linewidth}\centering
        \includegraphics[width=0.9\linewidth]{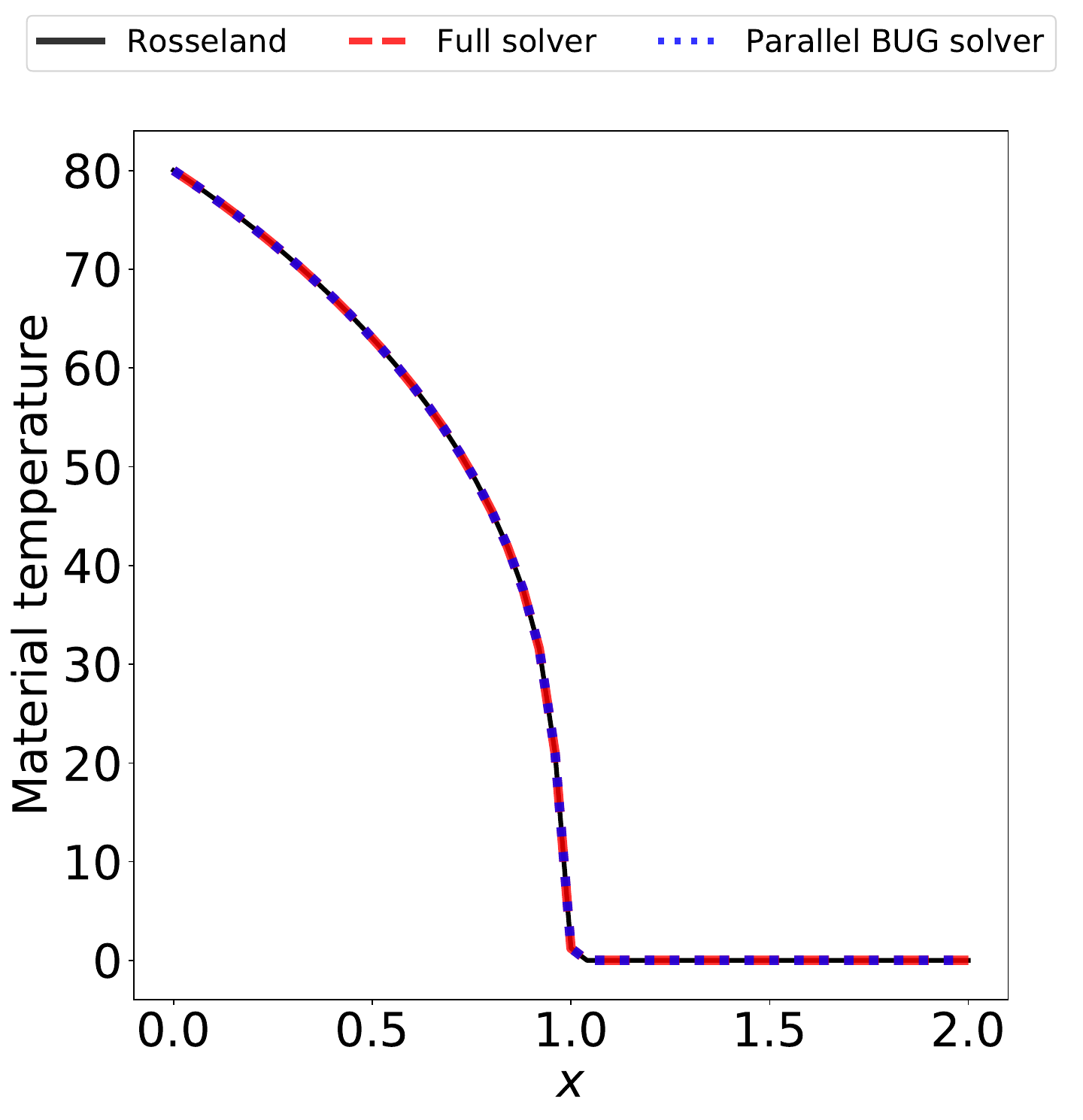}
    \end{subfigure}
    \begin{subfigure}[t]{0.45\linewidth}\centering
        \includegraphics[width=0.9\linewidth]{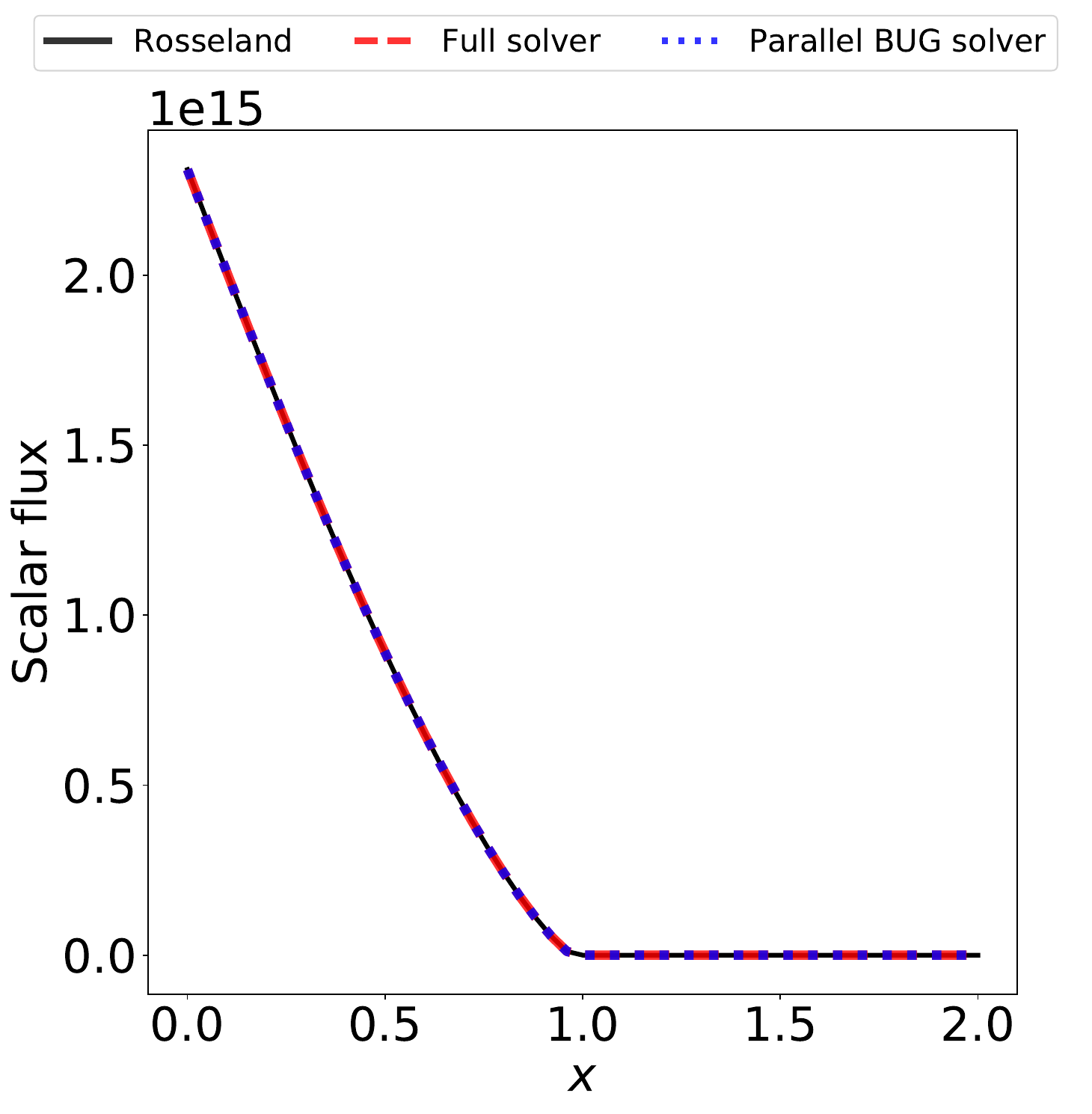}
    \end{subfigure}
    \caption{Cross-section of the material and scalar flux for the Marshak wave test case at time $5$ ps through $y=0.001$ for the diffusive limit with $\varepsilon=10^{-4}$.}
    \label{fig:DiffLimit}
\end{figure}

\paragraph{A Hohlraum problem}
Next, we consider a version of the hohlraum test case described in \cite{MR4673300,MR2657865,osti_800993}. The test case models a hohlraum for testing inertial confinement fusion in Cartesian coordinates. The problem's geometry and parameters are given in \Cref{fig:geometryHohlraum} and \Cref{tab:settingsHohlraum}, respectively. The blue regions in \Cref{fig:geometryHohlraum} are pure absorbers with $\sigma^{a} = 100 $ cm$^{-1}$ and $\rho c_{\nu} = 5.0 \cdot 10^{5}~\mathrm{GJ}~\mathrm{cm}^{-3}~\mathrm{keV}^{-1}$ and the white region is a pure vacuum with $\sigma^{a} = 0$ cm$^{-1}$ and $\rho c_{\nu} = 1.0 \cdot 10^{99} \mathrm{GJ}~\mathrm{cm}^{-3}~\mathrm{keV}^{-1}$. Note that the absorption coefficient $\sigma^{a}$ is kept constant following \cite{osti_800993} while in \cite{MR4673300,MR2657865} the absorption coefficient depends on temperature.

The initial temperature of the entire hohlraum is $10^{-3}~\mathrm{keV}$. The left boundary is heated at a constant temperature of $1~\mathrm{keV}$ and the right boundary is maintained at $10^{-3}~\mathrm{keV}$. There is an inflow of particles from the left and right boundaries into the hohlraum at equilibrium with the temperature while the top and bottom boundaries are reflective. These boundary conditions for the hohlraum have been described in \cite{MR4673300} and no analytical solution exists for this problem.

 \begin{table}[htbp!]
        \centering
        \begin{tabular}{|c|c|}
        \hline
             $N_{x},N_{y}$ & 102,102\\
             quadrature order, $q$& 30\\
             speed of light, $c$ & $29.98$ $\mathrm{m}~\mathrm{ns}^{-1}$ \\
             radiation constant, $a$& $0.01372$  $\mathrm{GJ}~\mathrm{cm}^{-3}~\mathrm{keV}^{-4}$ \\
             \hline
        \end{tabular}
        \caption{Material constants and settings for the hohlraum test case~\cite{MR2657865}.}
        \label{tab:settingsHohlraum}
    \end{table}
    
\begin{figure}[htbp!]
\centering
\tikzset{every picture/.style={line width=0.75pt}} 
\begin{subfigure}[t]{0.45\linewidth}\centering
\begin{tikzpicture}[x=0.75pt,y=0.75pt,yscale=-0.7,xscale=0.7]

\draw  [draw opacity=0][fill={rgb, 255:red, 10; green, 69; blue, 199 }  ,fill opacity=1 ] (240,90) -- (256.22,90) -- (256.22,210.17) -- (240,210.17) -- cycle ;
\draw  [draw opacity=0][fill={rgb, 255:red, 10; green, 69; blue, 199 }  ,fill opacity=1 ] (239.83,254.89) -- (480,254.89) -- (480,270.17) -- (239.83,270.17) -- cycle ;
\draw  [draw opacity=0][fill={rgb, 255:red, 10; green, 69; blue, 199 }  ,fill opacity=1 ] (480,30) -- (480,270.17) -- (465.94,270.17) -- (465.94,30) -- cycle ;
\draw  [draw opacity=0][fill={rgb, 255:red, 10; green, 69; blue, 199 }  ,fill opacity=1 ] (239.83,30) -- (480,30) -- (480,45.28) -- (239.83,45.28) -- cycle ;
\draw  [draw opacity=0][fill={rgb, 255:red, 10; green, 69; blue, 199 }  ,fill opacity=1 ] (300.89,90.33) -- (420.22,90.33) -- (420.22,209.67) -- (300.89,209.67) -- cycle ;
\draw  [dash pattern={on 0.84pt off 2.51pt}]  (256.22,90) -- (300.89,90.33) ;
\draw  [dash pattern={on 0.84pt off 2.51pt}]  (256.22,209.33) -- (300.89,209.67) ;
\draw  [dash pattern={on 0.84pt off 2.51pt}]  (420.22,90.33) -- (464.89,90.67) ;
\draw  [dash pattern={on 0.84pt off 2.51pt}]  (420.22,209.67) -- (464.89,210) ;
\draw  [dash pattern={on 0.84pt off 2.51pt}]  (300.22,45.33) -- (300.89,90.33) ;
\draw  [dash pattern={on 0.84pt off 2.51pt}]  (419.56,45.33) -- (420.22,90.33) ;
\draw  [dash pattern={on 0.84pt off 2.51pt}]  (300.89,209.67) -- (301.56,254.67) ;
\draw  [dash pattern={on 0.84pt off 2.51pt}]  (420.22,209.67) -- (420.89,254.67) ;
\draw  [dash pattern={on 0.84pt off 2.51pt}]  (255.56,45) -- (256.22,90) ;
\draw  [dash pattern={on 0.84pt off 2.51pt}]  (256.22,209.33) -- (256.89,254.33) ;

\draw (221,22.4) node [anchor=north west][inner sep=0.75pt]  [font=\scriptsize]  {$1$};
\draw (222,264.4) node [anchor=north west][inner sep=0.75pt]  [font=\scriptsize]  {$0$};
\draw (205,251.4) node [anchor=north west][inner sep=0.75pt]  [font=\scriptsize]  {$0.05$};
\draw (249,275) node [anchor=north west][inner sep=0.75pt]  [font=\scriptsize]  {$0.05$};
\draw (237,275) node [anchor=north west][inner sep=0.75pt]  [font=\scriptsize]  {$0$};
\draw (205,40.4) node [anchor=north west][inner sep=0.75pt]  [font=\scriptsize]  {$0.95$};
\draw (205,204.4) node [anchor=north west][inner sep=0.75pt]  [font=\scriptsize]  {$0.25$};
\draw (205,86.4) node [anchor=north west][inner sep=0.75pt]  [font=\scriptsize]  {$0.75$};
\draw (289,275) node [anchor=north west][inner sep=0.75pt]  [font=\scriptsize]  {$0.25$};
\draw (408,275) node [anchor=north west][inner sep=0.75pt]  [font=\scriptsize]  {$0.75$};
\draw (445,275) node [anchor=north west][inner sep=0.75pt]  [font=\scriptsize]  {$0.95$};
\draw (476,275) node [anchor=north west][inner sep=0.75pt]  [font=\scriptsize]  {$1$};
\end{tikzpicture}
\end{subfigure}
\begin{subfigure}[b]{0.45\linewidth}
\centering
    \centering\includegraphics[width=0.75\linewidth]{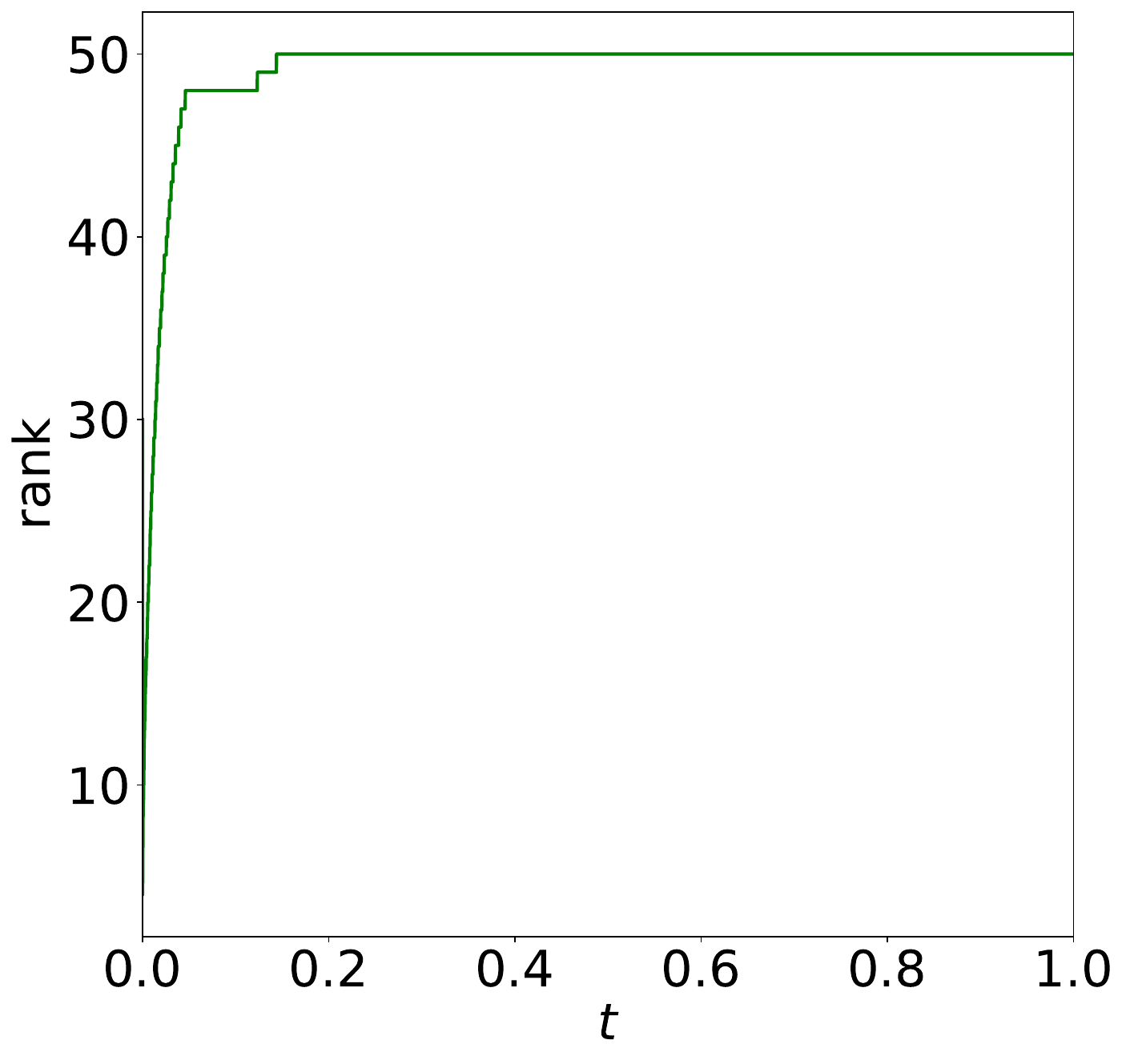}
\end{subfigure}
\caption{Left: Geometry of the hohlraum as described in \cite{MR2657865}. Right: Rank over time for the hohlraum test case with $\vartheta = 10^{-2}$ until $1~\mathrm{ns}$.}
\label{fig:geometryHohlraum}
\end{figure}

We simulate the hohlraum test case for $\varepsilon = 1.0$ until $t_{\mathrm{end}}=1~\mathrm{ns}$. The material temperature and radiation temperature at $t_{\mathrm{end}}$ are plotted in Figure~\ref{fig:hohlraumMatTemp} and Figure~\ref{fig:hohlraumRadTemp}, respectively. We see that the parallel BUG solver approximates the full-rank solution accurately while only requiring 8551 seconds compared to 15,311 seconds for the full solver. Moreover, we see from Figure~\ref{fig:geometryHohlraum} that the rank of the parallel BUG solution does not grow beyond $50$ despite maximal rank being much higher. 

\begin{figure}[htbp!]
    \centering
        \begin{subfigure}[t]{0.32\linewidth}
        \centering
        \includegraphics[width=\linewidth]{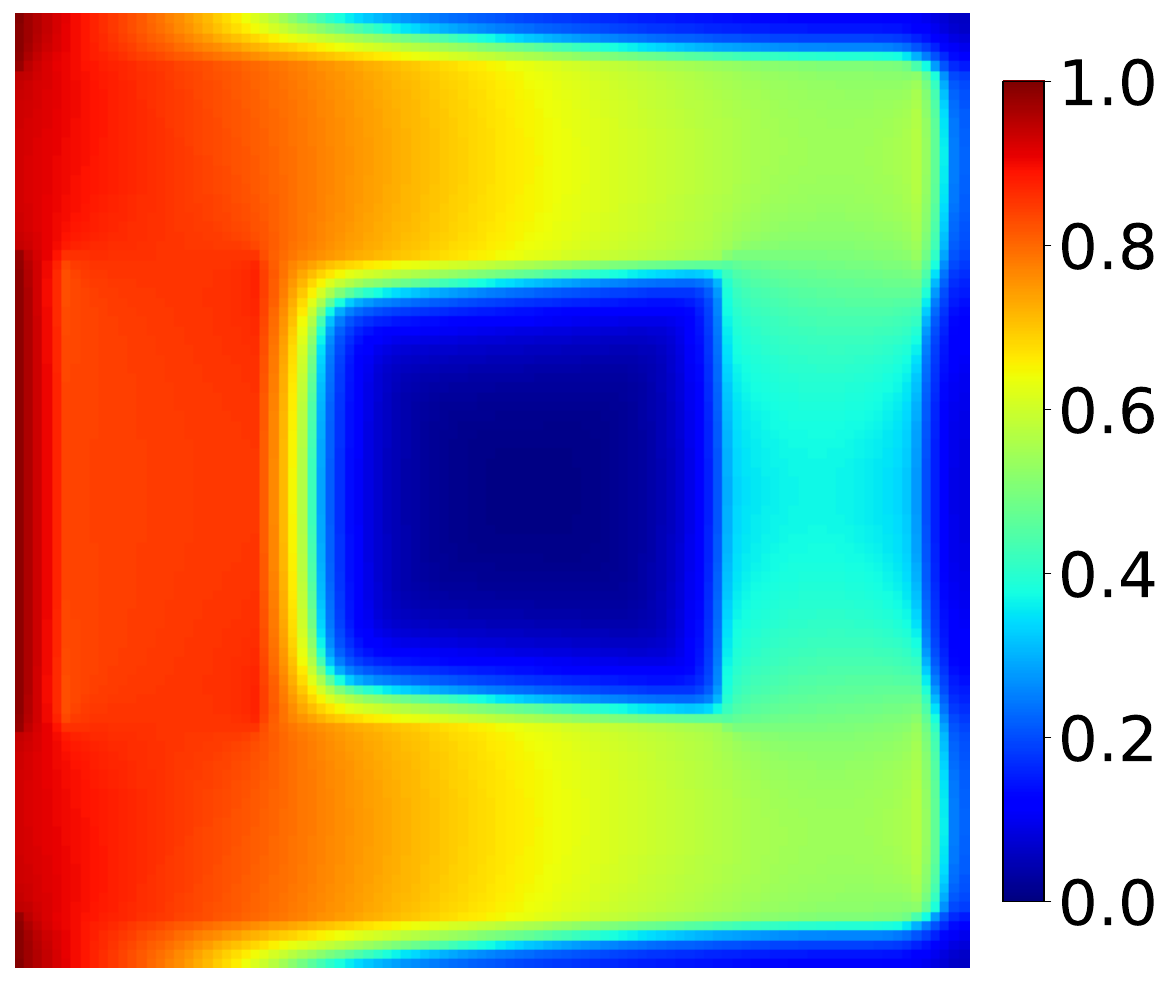}
        \caption{$T_{\mathrm{rad}}^{\mathrm{Full}}$}
    \end{subfigure}
    \begin{subfigure}[t]{0.32\linewidth}
    \centering
        \includegraphics[width=\linewidth]{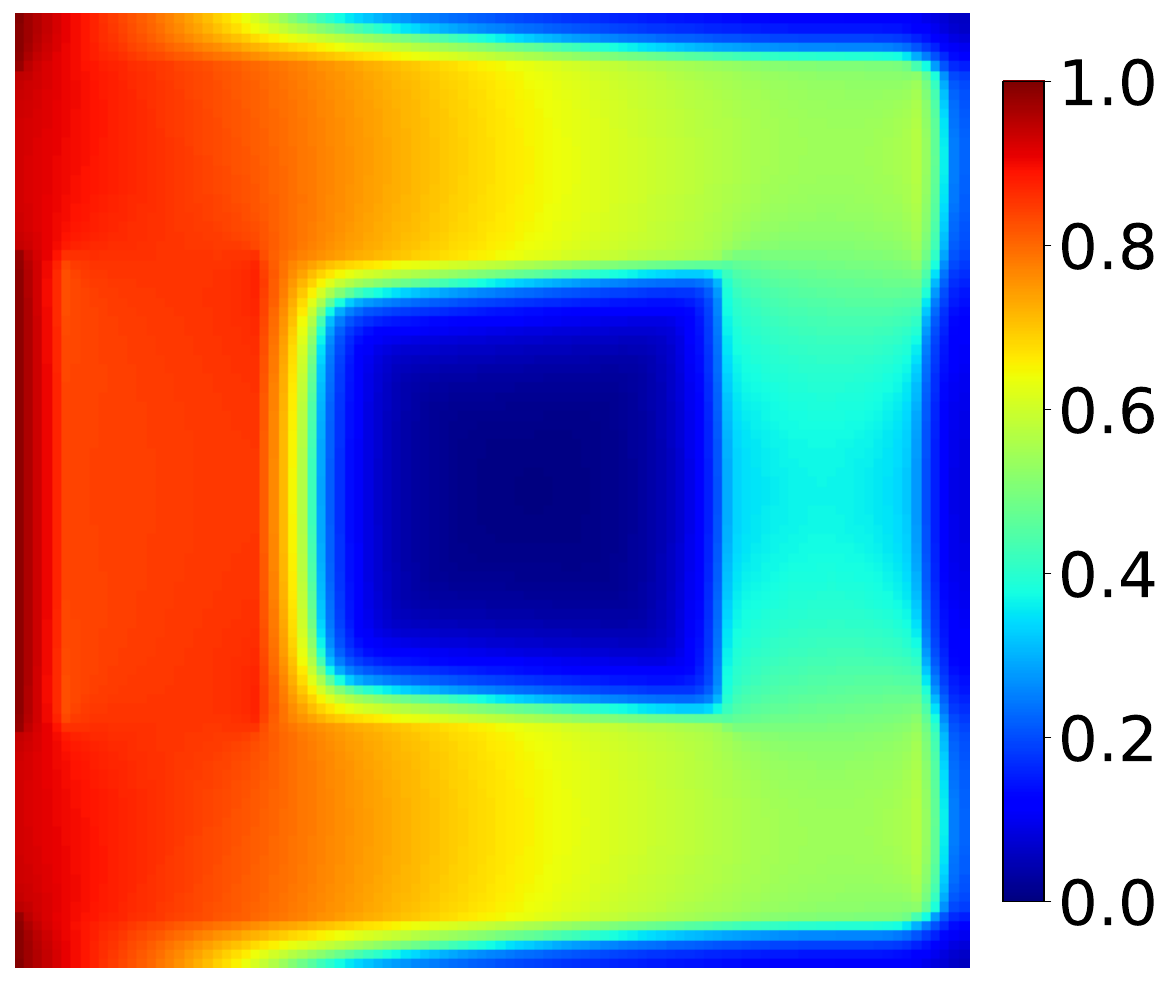}
        \caption{$T_{\mathrm{rad}}^{\vartheta}$}
    \end{subfigure}
    \begin{subfigure}[t]{0.32\linewidth}
    \centering
        \includegraphics[width=\linewidth]{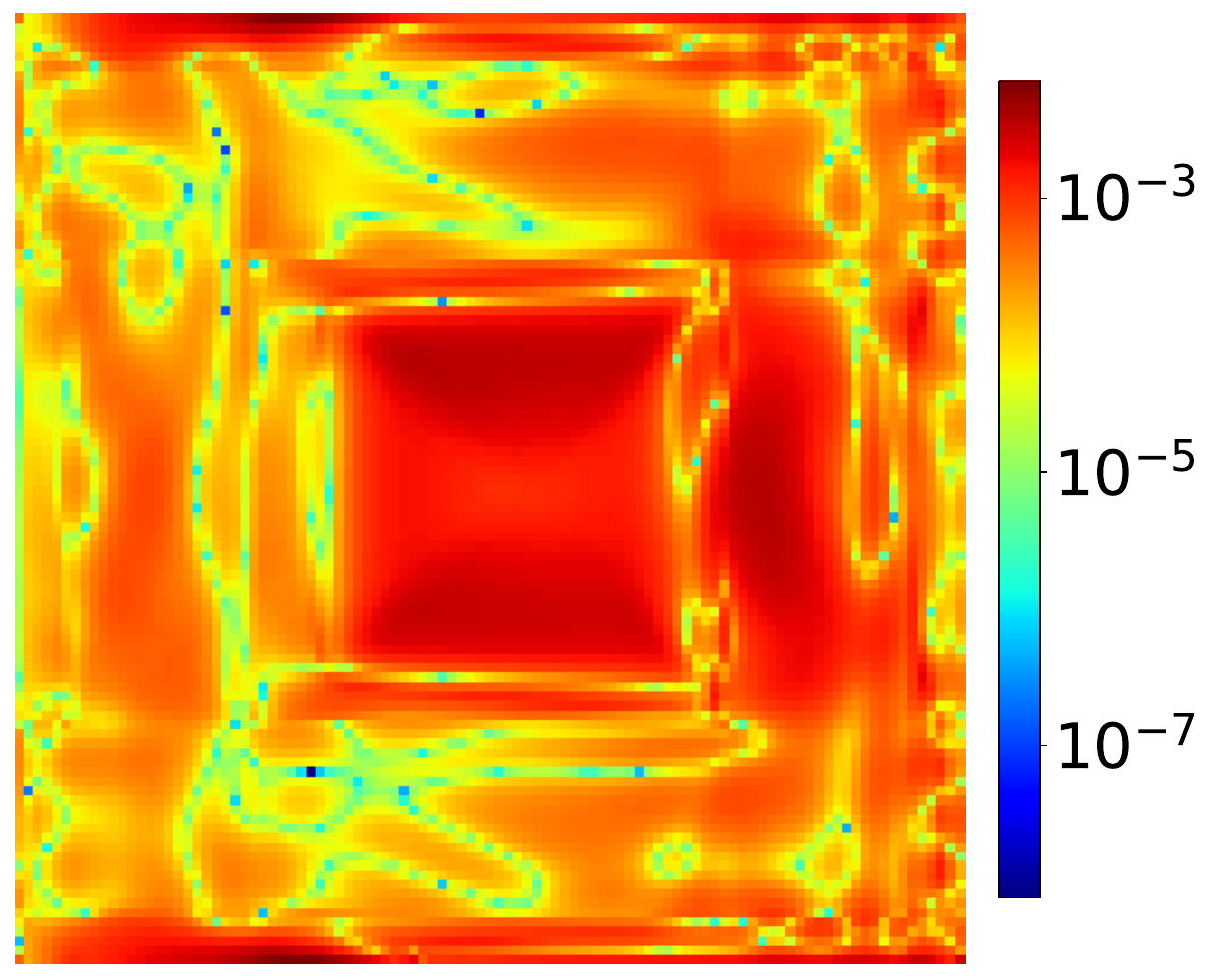}
        \caption{$\lvert T_{\mathrm{rad}}^{\mathrm{Full}} - T_{\mathrm{rad}}^{\vartheta}\rvert$}
    \end{subfigure}
    \caption{Radiation temperature at $t = 1~\mathrm{ns}$  for the hohlraum test case.}
    \label{fig:hohlraumMatTemp}
\end{figure}
\begin{figure}[htbp!]
    \centering
    \begin{subfigure}[t]{0.32\linewidth}
        \centering
        \includegraphics[width=\linewidth]{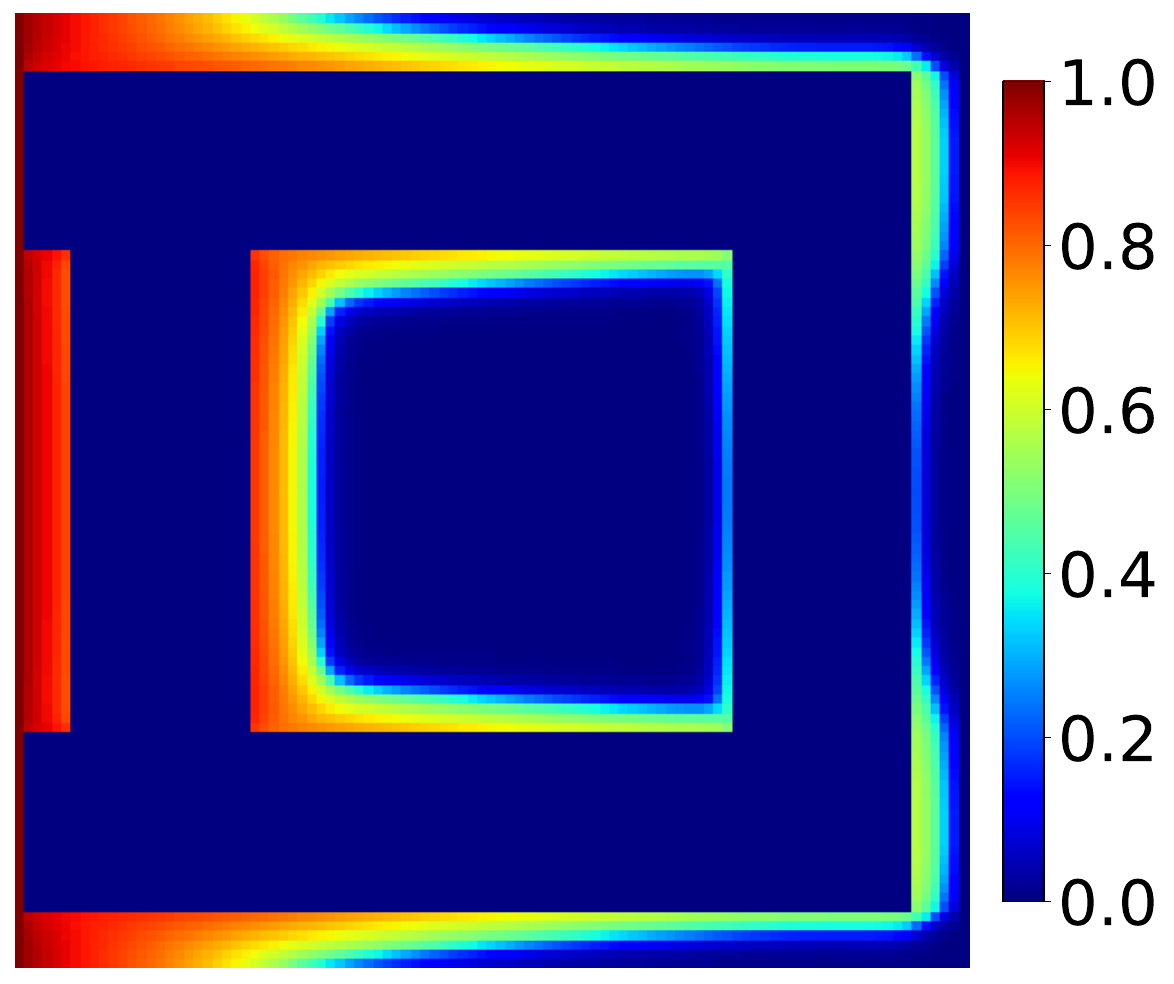}
        \caption{$T^{\mathrm{Full}}$}
    \end{subfigure}
    \begin{subfigure}[t]{0.32\linewidth}
    \centering
        \includegraphics[width=\linewidth]{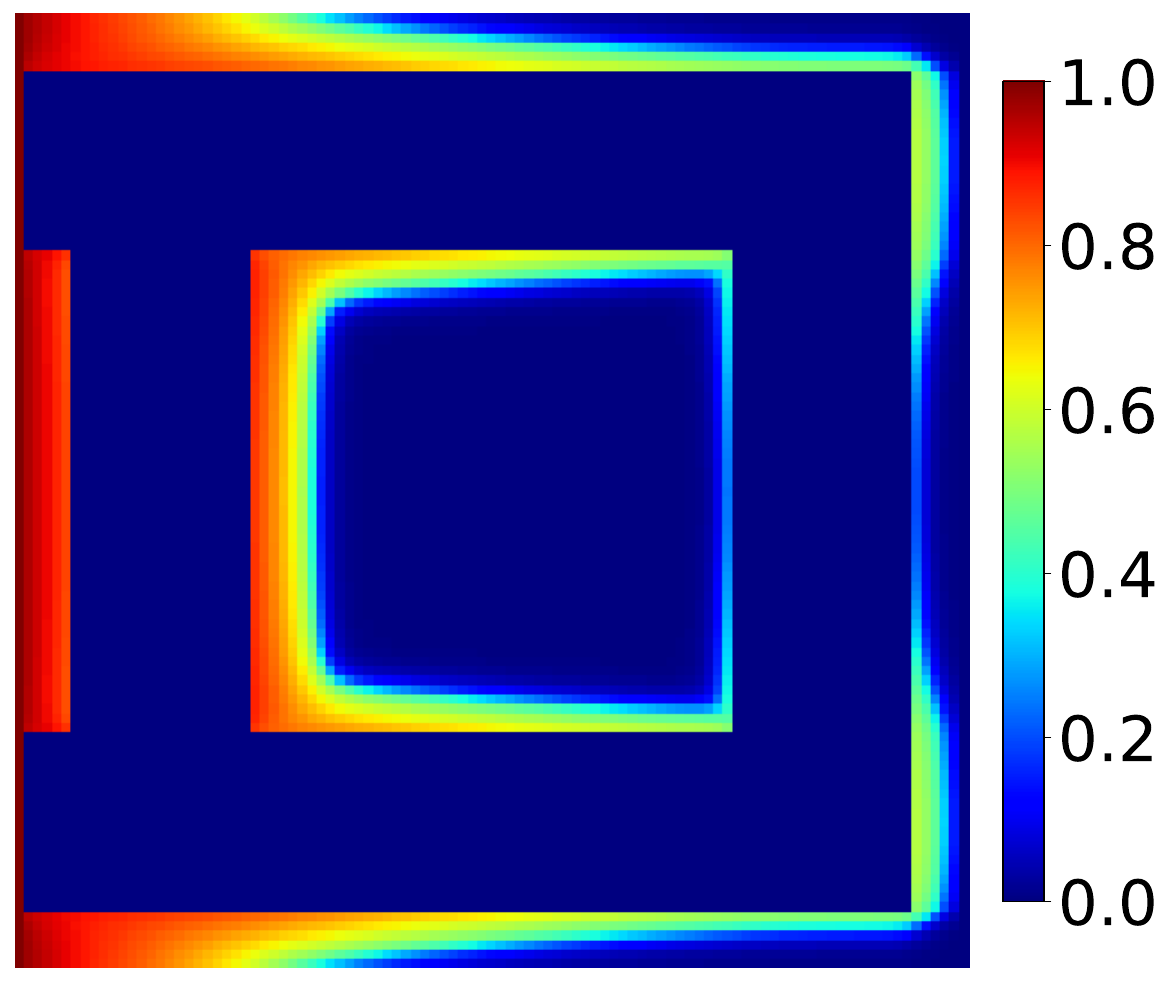}
        \caption{$T^{\vartheta}$}
    \end{subfigure}
    \begin{subfigure}[t]{0.32\linewidth}
    \centering
        \includegraphics[width=\linewidth]{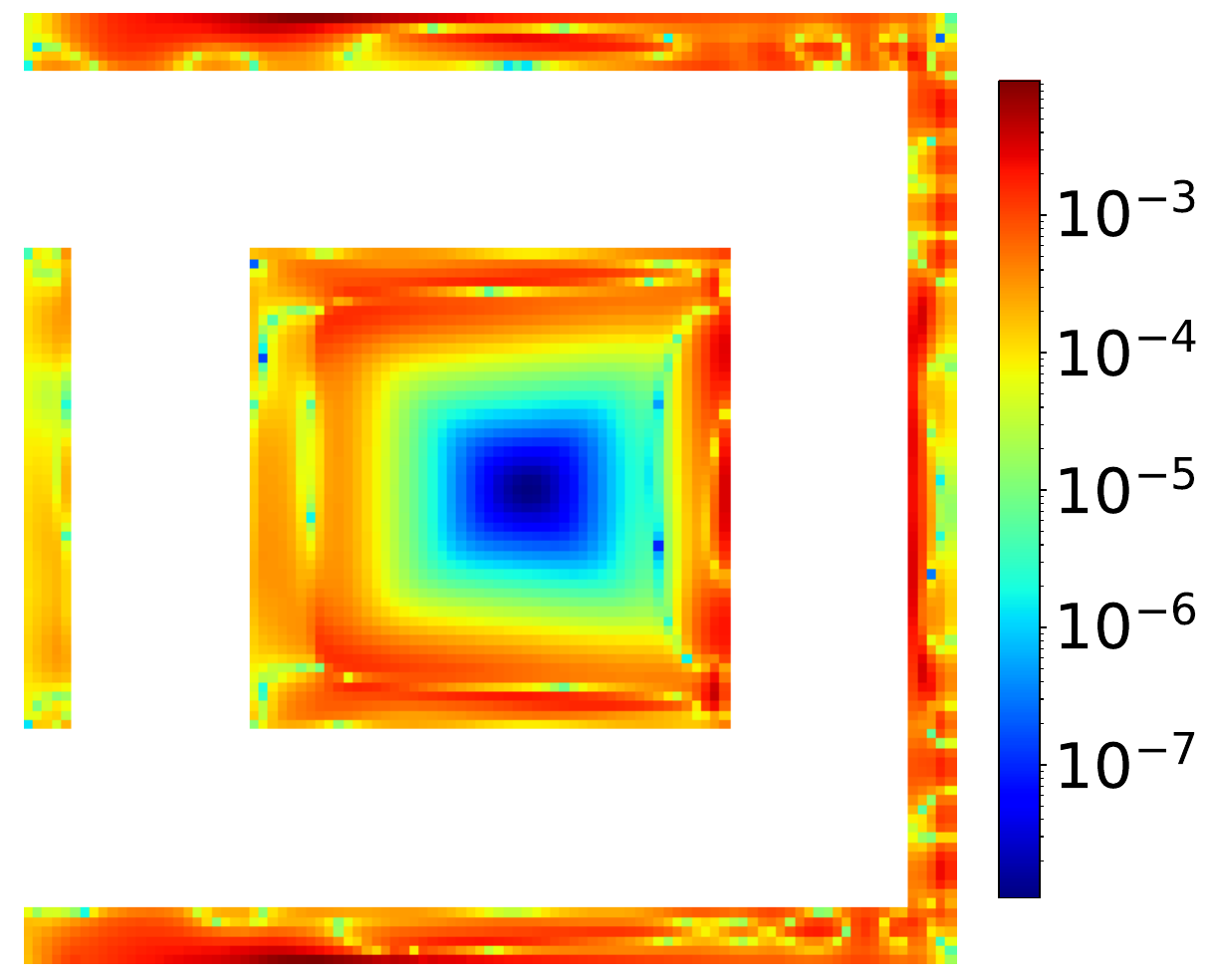}
        \caption{$\lvert T^{\mathrm{Full}} - T^{\vartheta}\rvert$}
    \end{subfigure}
    \caption{Material temperature at $t = 1~\mathrm{ns}$ for the hohlraum test case.}
    \label{fig:hohlraumRadTemp}
\end{figure}

\section*{Conclusions and outlook}
In this work, we have proposed a low-rank scheme for the nonlinear thermal radiative transfer equations based on macro--micro decomposition. Th proposed macro--micro parallel BUG scheme is asymptotic--preserving, mass preserving, rank-adaptive, and energy stable for the nonlinear Stefan-Boltzmann closure under a mixed hyperbolic and parabolic CFL condition. In addition, we also propose an efficient algorithm to implement reflection-transmission type boundary conditions for the macro--micro parallel BUG scheme. Several experiments demonstrating the efficacy of the proposed schemes are presented. It is observed that the macro--micro scheme captures the solution to a high degree of accuracy while being computationally and memory efficient. 

\section*{Acknowledgement}
The work of Chinmay Patwardhan was funded by the Deutsche Forschungsgemeinschaft (DFG, German Research Foundation) – Project-ID 258734477 – SFB 1173.

\printbibliography
\appendix

\begin{appendices}

\section{Proof of Theorem 2}\label{appendix:ProofNMMstab}
To prove Theorem~\ref{thm:StabNMM} we first state and prove several auxiliary lemmas.

\begin{property}\label{lemma:sumofsquares}
    For any $\{ c_{i}\}_{i=1,\ldots,\Nx}\in\R $ and $\{d_{i}\}_{i=1,\ldots,\Nx}\in\R $ we have
    \begin{equation*}
        \sum_{i}c_{i}d_{i} = \frac{1}{2}\sum_{i}c^{2}_{i} + \frac{1}{2}\sum_{i}d^{2}_{i} - \frac{1}{2}\sum_{i}(c_{i}-d_{i})^{2}. 
    \end{equation*}
\end{property}

\begin{lemma}\label{lemma:IntegByPartsD0}
    Given any $\boldsymbol{\varphi}_{\hindexp{i}{1}j} = (\varphi_{\hindexp{i}{1}j,1},\ldots,\varphi_{\hindexp{i}{1}j,N_{q}})^{\top}\in\R^{N_{q}}$ and $u_{\hindexp{i}{1}j}\in\R$ we have
    \begin{align*}
        \sum_{ij}\left(\boldsymbol{\varphi}_{\hindexp{i}{1}j}\right)^{\top}\textbf{M}^{2}\mathds{1}\delta^{0}_{x}u_{\hindexp{i}{1}j} &= -\sum_{ij}\left(\mathcal{D}^{0}_{x}\boldsymbol{\varphi}_{ij}\right)^{\top}\textbf{M}^{2}\mathds{1}u_{ij},\\
        \sum_{ij}\left(\boldsymbol{\varphi}_{i\hindexp{j}{1}}\right)^{\top}\textbf{M}^{2}\mathds{1}\delta^{0}_{x}u_{i\hindexp{j}{1}} &= -\sum_{ij}\left(\mathcal{D}^{0}_{x}\boldsymbol{\varphi}_{\hindexp{i}{1}\hindexp{j}{1}}\right)^{\top}\textbf{M}^{2}\mathds{1}u_{\hindexp{i}{1}\hindexp{j}{1}},\\
        \sum_{ij}\left(\boldsymbol{\varphi}_{i\hindexp{j}{1}}\right)^{\top}\textbf{M}^{2}\mathds{1}\delta^{0}_{y}u_{i\hindexp{j}{1}} &= -\sum_{ij}\left(\mathcal{D}^{0}_{y}\boldsymbol{\varphi}_{\hindexp{i}{1}\hindexp{j}{1}}\right)^{\top}\textbf{M}^{2}\mathds{1}u_{\hindexp{i}{1}\hindexp{j}{1}},\\
        \sum_{ij}\left(\boldsymbol{\varphi}_{\hindexp{i}{1}j}\right)^{\top}\textbf{M}^{2}\mathds{1}\delta^{0}_{y}u_{\hindexp{i}{1}j} &= -\sum_{ij}\left(\mathcal{D}^{0}_{y}\boldsymbol{\varphi}_{ij}\right)^{\top}\textbf{M}^{2}\mathds{1}u_{ij}.
    \end{align*}
\end{lemma}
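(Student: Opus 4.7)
All four identities are discrete summation-by-parts (Abel) identities on the staggered grid, and they share the same one-line proof up to relabelling indices. The plan is to reduce the problem to a purely scalar discrete integration by parts by exploiting the fact that the angular operation $\boldsymbol{\varphi}\mapsto \boldsymbol{\varphi}^{\top}\textbf{M}^{2}\mathds{1}$ commutes with the spatial difference $\mathcal{D}^{0}_{v}$, and that both operators $\delta^{0}_{v}$ and $\mathcal{D}^{0}_{v}$ use the same two-point stencil — only evaluated at a staggered location.

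First, I would observe that for any spatially indexed vector $\boldsymbol{\varphi}_{\kappa}\in\R^{N_{q}}$ the quantity $a_{\kappa}:= \boldsymbol{\varphi}_{\kappa}^{\top}\textbf{M}^{2}\mathds{1} = \sum_{\ell}w_{\ell}\varphi_{\kappa,\ell}$ is a scalar, and that $(\mathcal{D}^{0}_{v}\boldsymbol{\varphi}_{\cdot})^{\top}\textbf{M}^{2}\mathds{1} = \mathcal{D}^{0}_{v}a_{\cdot}$ since $\mathcal{D}^{0}_{v}$ acts linearly on the spatial index alone. Each of the four claims therefore reduces to showing a scalar identity of the form
\begin{displaymath}
    \sum_{ij} a_{\alpha\beta}\,\delta^{0}_{v}u_{\alpha\beta} \;=\; -\sum_{ij} \mathcal{D}^{0}_{v}a_{\alpha'\beta'}\,u_{\alpha'\beta'},
\end{displaymath}
where $(\alpha,\beta)$ are the interface indices (where $a$ lives) and $(\alpha',\beta')$ are the shifted centre/corner indices (where $u$ lives).

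Next I would perform the standard index shift. For instance, for the first identity, writing out $\delta^{0}_{x}u_{\hindexp{i}{1}j} = (u_{i+1,j}-u_{i,j})/\Delta x$ and relabelling $i\mapsto i-1$ in the sum containing $u_{i+1,j}$, I obtain
\begin{displaymath}
    \sum_{ij} a_{\hindexp{i}{1}j}\bigl(u_{i+1,j}-u_{i,j}\bigr) \;=\; \sum_{ij}\bigl(a_{\hindexm{i}{1}j}-a_{\hindexp{i}{1}j}\bigr)u_{i,j},
\end{displaymath}
which after dividing by $\Delta x$ is exactly $-\sum_{ij}\mathcal{D}^{0}_{x}a_{ij}\,u_{ij}$. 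The second, third, and fourth identities are obtained by the same one-step index shift, with the only change being (i) whether the shift is in $i$ or $j$, and (ii) whether the centre/corner sum is indexed by $(i,j)$ or by $(\hindexp{i}{1},\hindexp{j}{1})$; since both sub-lattices of centre/corner points are translation-invariant under the shift, the argument is identical.

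The main (minor) obstacle is simply careful bookkeeping of the staggered indices and justifying that the index shift produces no boundary remainder; this is implicit from the standing assumption of periodic or zero boundary data already used in the local mass conservation theorem, so the shifted endpoint contributions cancel. Once this is settled, the four identities follow immediately from the same scalar summation by parts applied to the four combinations of $v\in\{x,y\}$ and sub-lattice $\{(i,j),(\hindexp{i}{1},\hindexp{j}{1})\}$.
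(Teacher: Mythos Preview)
Your proposal is correct and follows essentially the same approach as the paper: expand the definition of $\delta^{0}_{x}$, perform a single index shift to move the difference onto $\boldsymbol{\varphi}$, and identify the result as $-\mathcal{D}^{0}_{x}$; the paper does exactly this for the first identity (keeping the $\ell$-sum explicit rather than collapsing it to the scalar $a_\kappa$ first, which is a purely cosmetic difference) and then states that the remaining three follow analogously. Your explicit remark that the boundary remainder vanishes under the standing periodic/zero boundary assumption is a point the paper leaves implicit.
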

\begin{proof}
    Consider 
    \begin{align*}
        \sum_{ij}\left(\boldsymbol{\varphi}_{\hindexp{i}{1}j}\right)^{\top}\textbf{M}^{2}\mathds{1}\delta^{0}_{x}u_{\hindexp{i}{1}j} &= \sum_{\ell}\sum_{j}\sum_{i}w_{\ell}\varphi_{\hindexp{i}{1}j,\ell}\delta^{0}_{x}u_{\hindexp{i}{1}j}\\
        &= \frac{1}{\Delta x}\sum_{\ell}\sum_{j}\sum_{i}w_{\ell}\varphi_{\hindexp{i}{1}j,\ell } \left( u_{i+1,j} - u_{ij} \right)\\
        &= -\frac{1}{\Delta x}\sum_{\ell}\sum_{j}\sum_{i}w_{\ell} \left( \varphi_{\hindexp{i}{1}j,\ell} - \varphi_{\hindexp{i}{1}j,\ell } \right)u_{ij}\\
        &= -\frac{1}{\Delta x}\sum_{\ell}\sum_{j}\sum_{i}w_{\ell}\left(\mathcal{D}^{0}_{x}\varphi_{ij,\ell }\right)u_{ij}\\
        &= -\sum_{ij}\left(\mathcal{D}^{0}_{x}\boldsymbol{\varphi}_{ij}\right)^{\top}\textbf{M}^{2}\mathds{1}u_{ij}.
    \end{align*}
    The other relations can be shown along similar lines.
\end{proof}

\begin{lemma}\label{lemma:IntegByPartsDz}
    Given any $\boldsymbol{\varphi}_{\hindexp{i}{1}j} = (\varphi_{\hindexp{i}{1}j,1},\ldots,\varphi_{\hindexp{i}{1}j,N_{q}})^{\top}\in\R^{N_{q}}$ and $\boldsymbol{\psi}_{\hindexp{i}{1}j} = (\psi_{\hindexp{i}{1}j,1},\ldots,\psi_{\hindexp{i}{1}j,N_{q}})^{\top}\in\R^{N_{q}}$ the following is satisfied for:
    \begin{align*}
        \sum_{ij}\left(\mathcal{D}^{\pm}_{v}\boldsymbol{\varphi}_{\hindexp{i}{1}j}\right)^{\top}{\normalfont\textbf{S}}\boldsymbol{\psi}_{\hindexp{i}{1}j}\Deltaxy = -\sum_{ij}\left(\boldsymbol{\varphi}_{\hindexp{i}{1}j}\right)^{\top}{\normalfont\textbf{S}}\mathcal{D}^{\mp}_{v}\boldsymbol{\psi}_{\hindexp{i}{1}j}\Deltaxy,
    \end{align*}
    where ${\normalfont\textbf{S}}\in\R^{N_{q}\times N_{q}}$ is any symmetric matrix and $v \in \mathcal{J}_{\textbf{\textit{x}}}$. The same results hold at the spatial point $(x_{i},y_{\hindexp{j}{1}})$.
\end{lemma}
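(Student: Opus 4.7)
The plan is to prove this as a direct discrete summation-by-parts identity by expanding the one-sided difference operator, re-indexing the resulting sum, and using the (implicit) assumption of zero or periodic boundary conditions that is consistently adopted throughout the paper (e.g., in the local mass conservation proof).

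First, I would fix $v = x$ and treat the $+$ case for definiteness. Using $\mathcal{D}^{+}_{x}\boldsymbol{\varphi}_{\hindexp{i}{1}j} = (\boldsymbol{\varphi}_{\hindexp{i}{3}j} - \boldsymbol{\varphi}_{\hindexp{i}{1}j})/\Delta x$, the left-hand side becomes
\begin{equation*}
\sum_{ij}\left(\mathcal{D}^{+}_{x}\boldsymbol{\varphi}_{\hindexp{i}{1}j}\right)^{\top}\textbf{S}\boldsymbol{\psi}_{\hindexp{i}{1}j}\Deltaxy = \frac{\Delta y}{1}\sum_{j}\sum_{i}\left(\boldsymbol{\varphi}_{\hindexp{i}{3}j}-\boldsymbol{\varphi}_{\hindexp{i}{1}j}\right)^{\top}\textbf{S}\,\boldsymbol{\psi}_{\hindexp{i}{1}j}.
\end{equation*}
Splitting into two sums and shifting $i \mapsto i-1$ in the first one re-aligns the $\boldsymbol{\varphi}$ index; under periodic or zero boundary data the boundary contribution of the shift vanishes, giving $\sum_i \boldsymbol{\varphi}_{\hindexp{i}{1}j}^{\top}\textbf{S}(\boldsymbol{\psi}_{\hindexm{i}{1}j} - \boldsymbol{\psi}_{\hindexp{i}{1}j})$. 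Recognising the bracketed expression as $-\Delta x\,\mathcal{D}^{-}_{x}\boldsymbol{\psi}_{\hindexp{i}{1}j}$ and re-inserting $\Delta x\Delta y$ yields exactly the claimed right-hand side.

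Second, the $-$ case follows by the mirror-image index shift ($i\mapsto i+1$ instead of $i\mapsto i-1$), and the $y$-direction at the staggered points $(x_{\hindexp{i}{1}},y_j)$ is identical with $j$ playing the role of $i$ and $\Delta y$ replacing $\Delta x$. The dual-grid statement at $(x_i,y_{\hindexp{j}{1}})$ repeats the same manipulation with the other family of staggered indices; no new ingredient is required. The symmetry of $\textbf{S}$ is not strictly needed for the algebraic shift itself—only bilinearity is used—but is consistent with how this identity is applied later in the energy estimate, where $\boldsymbol{\varphi}^{\top}\textbf{S}\boldsymbol{\psi}=\boldsymbol{\psi}^{\top}\textbf{S}\boldsymbol{\varphi}$ is invoked.

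The main (and essentially only) obstacle is the bookkeeping of the boundary contributions from the index shift. These vanish for periodic or zero boundary conditions, which is the setting in which Theorem~\ref{thm:StabNMM} is stated; for the reflection-transmission boundaries treated later in the paper, the analogous SBP identity would produce boundary surface terms that must be controlled separately, but that is beyond the scope of this auxiliary lemma.
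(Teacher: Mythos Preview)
Your proposal is correct and is essentially the same approach the paper takes: the paper's proof simply says it ``follows along the lines of \Cref{lemma:IntegByPartsD0}'', which is precisely the index-shift summation-by-parts argument you spell out. Your additional remarks on the role of the symmetry of $\textbf{S}$ and on boundary terms are accurate side observations but not needed for the lemma as stated.
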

\begin{proof}
    The proof of the lemma follows along the lines of \Cref{lemma:IntegByPartsD0}.
\end{proof}

\begin{lemma} \label{lemma:AuxLemma1}
    For $v \in\mathcal{J}_{\textbf{\textit{x}}}$, we define
    \begin{equation*}
        \begin{aligned}
            \mathcal{E}_{v}  &= \frac{1}{2\pi}\left[ \sum_{ij}\left( \mathcal{D}^{0}_{v}\textbf{\textit{g}}^{n+1}_{ij} \right)^{\top}\textbf{M}^{2}\textbf{Q}_{v}\mathds{1}\left( \frac{1}{c}B^{n}_{ij} + \frac{\varepsilon^{2}}{c}h^{n}_{ij} - \frac{1}{c}B^{n+1}_{ij} - \frac{\varepsilon^{2}}{c}h^{n+1}_{ij}  \right) \right.\\
            &\quad\left.+  \sum_{ij}\left( \mathcal{D}^{0}_{v}\textbf{\textit{g}}^{n+1}_{\hindexp{i}{1}\hindexp{j}{1}} \right)^{\top}\textbf{M}^{2}\textbf{Q}_{v}\mathds{1}\left( \frac{1}{c}B^{n}_{\hindexp{i}{1}\hindexp{j}{1}} + \frac{\varepsilon^{2}}{c}h^{n}_{\hindexp{i}{1}\hindexp{j}{1}} - \frac{1}{c}B^{n+1}_{\hindexp{i}{1}\hindexp{j}{1}} - \frac{\varepsilon^{2}}{c}h^{n+1}_{\hindexp{i}{1}\hindexp{j}{1}}  \right) \right]\Deltaxy.
        \end{aligned}
    \end{equation*}
    Then, the following inequality holds
    \begin{equation*}
    \begin{aligned}
        \mathcal{E}_{v} &\leq \frac{\Delta t}{(2\pi)^{2}}\left[ \sum_{ij}\left[\left(\mathcal{D}^{0}_{v}\textbf{\textit{g}}^{n+1}_{ij} \right)^{\top}\textbf{M}^{2}\textbf{Q}_{v}\mathds{1} \right]^{2} + \sum_{ij}\left[\left(\mathcal{D}^{0}_{v}\textbf{\textit{g}}^{n+1}_{\hindexp{i}{1}\hindexp{j}{1}} \right)^{\top}\textbf{M}^{2}\textbf{Q}_{v}\mathds{1} \right]^{2}  \right]\Deltaxy\\
        &\qquad+ \frac{1}{4\Delta t}\norm{\frac{1}{c}B^{n} + \frac{\varepsilon^{2}}{c}h^{n} - \frac{1}{c}B^{n+1} - \frac{\varepsilon^{2}}{c}h^{n+1}}^{2}.
    \end{aligned}
    \end{equation*}
\end{lemma}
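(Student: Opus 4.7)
\textbf{Proof plan for Lemma~\ref{lemma:AuxLemma1}.} The statement is a weighted Young's inequality for a bilinear pairing, so the plan is to apply the pointwise inequality $\xi\eta \leq \lambda\xi^{2}/2 + \eta^{2}/(2\lambda)$ at each spatial grid index and then sum. Concretely, denote
\begin{displaymath}
    A_{ij} := \left(\mathcal{D}^{0}_{v}\textbf{\textit{g}}^{n+1}_{ij}\right)^{\top}\textbf{M}^{2}\textbf{Q}_{v}\mathds{1}, \quad B_{ij} := \frac{1}{c}B^{n}_{ij} + \frac{\varepsilon^{2}}{c}h^{n}_{ij} - \frac{1}{c}B^{n+1}_{ij} - \frac{\varepsilon^{2}}{c}h^{n+1}_{ij},
\end{displaymath}
and analogously $A_{\hindexp{i}{1}\hindexp{j}{1}}$ and $B_{\hindexp{i}{1}\hindexp{j}{1}}$ at the cell corners. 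Then $\mathcal{E}_{v}$ is exactly $\frac{1}{2\pi}\sum_{ij}(A_{ij}B_{ij} + A_{\hindexp{i}{1}\hindexp{j}{1}}B_{\hindexp{i}{1}\hindexp{j}{1}})\Deltaxy$.

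The key step is to apply Young's inequality with the tailored weight $\lambda = \Delta t/\pi > 0$, which yields for each term
\begin{displaymath}
    \frac{1}{2\pi}A_{ij}B_{ij} \leq \frac{\Delta t}{(2\pi)^{2}}A_{ij}^{2} + \frac{1}{4\Delta t}B_{ij}^{2},
\end{displaymath}
and identically for the corner contribution. I would verify this choice of $\lambda$ by writing $\tfrac{1}{2\pi}\xi\eta \leq \tfrac{\lambda}{4\pi}\xi^{2} + \tfrac{1}{4\pi\lambda}\eta^{2}$ and matching coefficients with the desired bound.

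Summing the pointwise estimate over all $i,j$ with weight $\Deltaxy$ and adding the two contributions gives the first bracketed term on the right-hand side of the stated inequality. For the second term, I would collect the $B_{ij}^{2}$ and $B_{\hindexp{i}{1}\hindexp{j}{1}}^{2}$ sums and recognize that, by the definition of the index set $\mathcal{K}^{C}$ (which contains \emph{both} cell centers and cell corners), their combination equals precisely $\|\tfrac{1}{c}\textbf{\textit{B}}^{n} + \tfrac{\varepsilon^{2}}{c}\textbf{\textit{h}}^{n} - \tfrac{1}{c}\textbf{\textit{B}}^{n+1} - \tfrac{\varepsilon^{2}}{c}\textbf{\textit{h}}^{n+1}\|^{2}$ in the discrete-$L^{2}$ norm from Section~\ref{section:EnergyStabNMM}. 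The only place where any care is needed is matching the factors of $2\pi$ and $\Delta t$ in the Young weight; everything else is bookkeeping and no obstacle arises.
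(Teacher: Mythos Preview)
Your proposal is correct and follows essentially the same approach as the paper: both apply Young's inequality pointwise to the bilinear pairing with a weight chosen to produce the coefficients $\Delta t/(2\pi)^{2}$ and $1/(4\Delta t)$, then identify the summed $B$-terms with the discrete $L^{2}$ norm over $\mathcal{K}^{C}$. The only cosmetic difference is that the paper absorbs the factor $1/(2\pi)$ into its $a_{ij}^{v}$ and sets the Young parameter $\alpha=1/(4\Delta t)$, whereas you keep the $1/(2\pi)$ outside and set $\lambda=\Delta t/\pi$; these are equivalent bookkeeping choices.
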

\begin{proof}
    Let 
    \begin{align*}
        a_{ij}^{v} &= \frac{1}{2\pi}\left( \mathcal{D}^{0}_{v}\textbf{\textit{g}}^{n+1}_{ij} \right)^{\top}\textbf{M}^{2}\textbf{Q}_{v}\mathds{1}\\
        b_{ij} &= \left( \frac{1}{c}B^{n}_{ij} + \frac{\varepsilon^{2}}{c}h^{n}_{ij} - \frac{1}{c}B^{n+1}_{ij} - \frac{\varepsilon^{2}}{c}h^{n+1}_{ij}  \right),
    \end{align*}
    then we can write $\mathcal{E}_{v}$ as
    \begin{equation*}
        \mathcal{E}_{v} = \sum_{ij}a_{ij}^{v}b_{ij}\Deltaxy + \sum_{ij}a_{\hindexp{i}{1}\hindexp{j}{1}}^{v}b_{\hindexp{i}{1}\hindexp{j}{1}}\Deltaxy.
    \end{equation*}
    Using Young's inequality  we get
    \begin{equation*}
        \mathcal{E}_{v} = \frac{1}{4\alpha}\sum_{ij}\left[(a_{ij}^{v})^{2} + (a_{\hindexp{i}{1}\hindexp{j}{1}}^{v})^{2}\right]\Deltaxy + \alpha\sum_{ij}\left[(b_{ij})^{2} + (b_{\hindexp{i}{1}\hindexp{j}{1}})^{2} \right]\Deltaxy.
    \end{equation*}
    Since 
    \begin{align*}
        \sum_{ij}\left[(b_{ij})^{2} + (b_{\hindexp{i}{1}\hindexp{j}{1}})^{2} \right]\Deltaxy &= \left[\sum_{ij}\left( \frac{1}{c}B^{n}_{ij} + \frac{\varepsilon^{2}}{c}h^{n}_{ij} - \frac{1}{c}B^{n+1}_{ij} - \frac{\varepsilon^{2}}{c}h^{n+1}_{ij}  \right)^{2} \right.\\
        &\qquad+ \left.\sum_{ij}\left( \frac{1}{c}B^{n}_{\hindexp{i}{1}\hindexp{j}{1}} + \frac{\varepsilon^{2}}{c}h^{n}_{\hindexp{i}{1}\hindexp{j}{1}} - \frac{1}{c}B^{n+1}_{\hindexp{i}{1}\hindexp{j}{1}} - \frac{\varepsilon^{2}}{c}h^{n+1}_{\hindexp{i}{1}\hindexp{j}{1}}  \right)^{2} \right]\Deltaxy \\
        &= \norm{ \frac{1}{c}B^{n} + \frac{\varepsilon^{2}}{c}h^{n} - \frac{1}{c}B^{n+1} - \frac{\varepsilon^{2}}{c}h^{n+1}}^{2},
    \end{align*}
    setting $\alpha = \frac{1}{4\Delta t}$ yields the required result.
\end{proof}
\begin{lemma}\label{lemma:AuxLemma2}
    If $\boldsymbol{\varphi}_{\hindexp{i}{1}j} \coloneqq (\varphi_{\hindexp{i}{1}j,1},\ldots,\varphi_{\hindexp{i}{1}j,N_{q}})^{\top}$ and $\boldsymbol{\varphi}_{i\hindexp{j}{1}} \coloneqq (\varphi_{i\hindexp{j}{1},1},\ldots,\varphi_{i\hindexp{j}{1},N_{q}})^{\top}$ in $\R^{N_{q}}$, then the following relations hold for the advection operator:
    \begin{align*}
        \mathcal{L}_{v}\boldsymbol{\varphi}_{\hindexp{i}{1}j} &= \textbf{Q}_{v}\mathcal{D}^{c}_{v}\boldsymbol{\varphi}_{\hindexp{i}{1}j} - \frac{\Delta v}{2}\abs{\textbf{Q}_{v}}\mathcal{D}^{-}_{v}\mathcal{D}^{+}_{v}\boldsymbol{\varphi}_{\hindexp{i}{1}j},\\
        \mathcal{L}_{v}\boldsymbol{\varphi}_{i\hindexp{j}{1}} &= \textbf{Q}_{v}\mathcal{D}^{c}_{v}\boldsymbol{\varphi}_{i\hindexp{j}{1}} - \frac{\Delta v}{2}\abs{\textbf{Q}_{v}}\mathcal{D}^{-}_{v}\mathcal{D}^{+}_{v}\boldsymbol{\varphi}_{i\hindexp{j}{1}},
    \end{align*}
   where 
    \begin{align*}
         \mathcal{D}^{c}_{x}\boldsymbol{\varphi}_{\hindexp{i}{1}j} &\coloneqq \frac{\boldsymbol{\varphi}_{\hindexp{i}{3}j} - \boldsymbol{\varphi}_{\hindexm{i}{1}j}}{2\Delta x}, & \mathcal{D}^{c}_{x}\boldsymbol{\varphi}_{i\hindexp{j}{1}} &\coloneqq \frac{\boldsymbol{\varphi}_{i+1,\hindexp{j}{1}} - \boldsymbol{\varphi}_{i-1,\hindexp{j}{1}}}{2\Delta x},
    \end{align*}
    and $\mathcal{D}^{c}_{y}$ is defined in a similar manner. 
\end{lemma}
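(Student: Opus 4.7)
The plan is to recognize this identity as the standard Lax--Friedrichs / Rusanov decomposition of the first-order upwind flux operator into a centered-flux piece plus an artificial-viscosity piece, and then verify it by two one-line stencil computations on the staggered $g$-subgrid. The hypotheses are purely algebraic: no properties of the solution or of the cross sections are used, and the only facts needed are the definitions of $\textbf{Q}_{v}^{\pm}$ and of the one-sided difference operators.

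First I would substitute $\textbf{Q}_{v}^{\pm} = (\textbf{Q}_{v}\pm\abs{\textbf{Q}_{v}})/2$ directly into the definition $\mathcal{L}_{v} = \textbf{Q}_{v}^{-}\mathcal{D}_{v}^{+} + \textbf{Q}_{v}^{+}\mathcal{D}_{v}^{-}$ and regroup the $\textbf{Q}_{v}$ and $\abs{\textbf{Q}_{v}}$ terms to obtain
\begin{equation*}
\mathcal{L}_{v}\boldsymbol{\varphi}_{\hindexp{i}{1}j} \;=\; \tfrac{1}{2}\textbf{Q}_{v}\bigl(\mathcal{D}_{v}^{+} + \mathcal{D}_{v}^{-}\bigr)\boldsymbol{\varphi}_{\hindexp{i}{1}j} \;+\; \tfrac{1}{2}\abs{\textbf{Q}_{v}}\bigl(\mathcal{D}_{v}^{-} - \mathcal{D}_{v}^{+}\bigr)\boldsymbol{\varphi}_{\hindexp{i}{1}j}.
\end{equation*}
Then I would reduce the lemma to two stencil identities on the $g$-nodes. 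For the centered piece, inserting the definitions of $\mathcal{D}_{v}^{\pm}$ yields
\begin{equation*}
\bigl(\mathcal{D}_{v}^{+} + \mathcal{D}_{v}^{-}\bigr)\boldsymbol{\varphi}_{\hindexp{i}{1}j} \;=\; \tfrac{1}{\Delta v}\bigl(\boldsymbol{\varphi}_{\hindexp{i}{3}j} - \boldsymbol{\varphi}_{\hindexm{i}{1}j}\bigr) \;=\; 2\,\mathcal{D}_{v}^{c}\boldsymbol{\varphi}_{\hindexp{i}{1}j}.
\end{equation*}
For the dissipation piece, the same substitution gives
\begin{equation*}
\bigl(\mathcal{D}_{v}^{-} - \mathcal{D}_{v}^{+}\bigr)\boldsymbol{\varphi}_{\hindexp{i}{1}j} \;=\; -\tfrac{1}{\Delta v}\bigl(\boldsymbol{\varphi}_{\hindexp{i}{3}j} - 2\boldsymbol{\varphi}_{\hindexp{i}{1}j} + \boldsymbol{\varphi}_{\hindexm{i}{1}j}\bigr) \;=\; -\Delta v\,\mathcal{D}_{v}^{-}\mathcal{D}_{v}^{+}\boldsymbol{\varphi}_{\hindexp{i}{1}j},
\end{equation*}
where the final equality is the standard fact that the composition of the forward and backward one-sided differences equals the three-point symmetric second difference scaled by $1/\Delta v^{2}$. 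Substituting these two identities back into the regrouped expression for $\mathcal{L}_{v}$ delivers the claimed decomposition at $(x_{\hindexp{i}{1}},y_{j})$, and applying the identical computation at the companion $g$-node $(x_{i},y_{\hindexp{j}{1}})$ gives the second equation.

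There is no real obstacle here; the lemma is essentially a bookkeeping identity. The only point worth a moment's care is that on the $g$-subgrid the operators $\mathcal{D}_{v}^{\pm}$, $\mathcal{D}_{v}^{c}$, and the composition $\mathcal{D}_{v}^{-}\mathcal{D}_{v}^{+}$ all have three-point stencils that remain within the $g$-nodes (adjacent $g$-nodes in the same row or column being separated by $\Delta v$), so every quantity in the statement is well defined pointwise without any interpolation between sub-grids. Hence the proof amounts to writing the two displayed identities above.
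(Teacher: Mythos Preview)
Your proof is correct and follows exactly the approach the paper indicates: the paper's own proof consists of the single sentence ``The relations can be shown by using the definition of the advection operator and $\mathcal{D}^{\pm}_{v}, \textbf{Q}^{\pm}_{v}$,'' and you have simply carried out that direct substitution explicitly. Your regrouping into the centered-difference and second-difference pieces is the standard (and only reasonable) way to execute this verification.
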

\begin{proof}
    The relations can be shown by using the definition of the advection operator and $\mathcal{D}^{\pm}_{v}, \textbf{Q}^{\pm}_{v}$.   
\end{proof}

\begin{lemma}\label{lemma:AuxLemma4}
Let $\boldsymbol{\varphi}_{\hindexp{i}{1}j} = (\varphi_{\hindexp{i}{1}j,1},\ldots,\varphi_{\hindexp{i}{1}j,N_{q}})^{\top}\in\R^{N_{q}}$ and $\boldsymbol{\psi}_{\hindexp{i}{1}j} = (\psi_{\hindexp{i}{1}j,1},\ldots,\psi_{\hindexp{i}{1}j,N_{q}})^{\top}\in\R^{N_{q}}$, then we have the following inequality
    \begin{equation}
        \begin{aligned}
             \left\vert \sum_{ij}\left[\textbf{M}(\textbf{Q}^{+}_{v}\mathcal{D}^{+}_{v} \right.\right. &+ \left.\left. \textbf{Q}^{-}_{v}\mathcal{D}^{-}_{v})\boldsymbol{\varphi}_{\hindexp{i}{1}j} \right]^{\top}(\textbf{M}\boldsymbol{\psi}_{\hindexp{i}{1}j})\Deltaxy  \right\vert\\ &\leq \alpha\sum_{ij}(\boldsymbol{\psi}_{\hindexp{i}{1}j})^{\top}\textbf{M}^{2}\boldsymbol{\psi}_{\hindexp{i}{1}j}\Deltaxy \\ &\qquad+ \frac{1}{4\alpha}\sum_{ij}\left( \abs{\textbf{Q}_{v}}\mathcal{D}^{+}_{v}\boldsymbol{\varphi}_{\hindexp{i}{1}j} \right)^{\top}\textbf{M}^{2}\left( \abs{\textbf{Q}_{v}}\mathcal{D}^{+}_{v}\boldsymbol{\varphi}_{\hindexp{i}{1}j} \right)\Deltaxy
        \end{aligned}
    \end{equation}
\end{lemma}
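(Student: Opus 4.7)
The plan is to reduce the inequality to Young's inequality applied in the discrete weighted inner product $\langle \textbf{a},\textbf{b}\rangle = \sum_{ij}\textbf{a}_{\hindexp{i}{1}j}^{\top}\textbf{M}^{2}\textbf{b}_{\hindexp{i}{1}j}\Deltaxy$. Setting $\textbf{a} = (\textbf{Q}^{+}_{v}\mathcal{D}^{+}_{v} + \textbf{Q}^{-}_{v}\mathcal{D}^{-}_{v})\boldsymbol{\varphi}$ and $\textbf{b}=\boldsymbol{\psi}$, the weighted Young inequality gives
\begin{equation*}
    \bigl|\langle \textbf{a},\textbf{b}\rangle\bigr| \leq \alpha \sum_{ij}\boldsymbol{\psi}_{\hindexp{i}{1}j}^{\top}\textbf{M}^{2}\boldsymbol{\psi}_{\hindexp{i}{1}j}\Deltaxy + \frac{1}{4\alpha}\sum_{ij}\bigl[(\textbf{Q}^{+}_{v}\mathcal{D}^{+}_{v} + \textbf{Q}^{-}_{v}\mathcal{D}^{-}_{v})\boldsymbol{\varphi}_{\hindexp{i}{1}j}\bigr]^{\top}\textbf{M}^{2}\bigl[(\textbf{Q}^{+}_{v}\mathcal{D}^{+}_{v} + \textbf{Q}^{-}_{v}\mathcal{D}^{-}_{v})\boldsymbol{\varphi}_{\hindexp{i}{1}j}\bigr]\Deltaxy.
\end{equation*}
The first term already matches the first term in the claimed bound, so only the $\|\textbf{a}\|^{2}$ contribution must be dominated by $\sum_{ij}(|\textbf{Q}_{v}|\mathcal{D}^{+}_{v}\boldsymbol{\varphi}_{\hindexp{i}{1}j})^{\top}\textbf{M}^{2}(|\textbf{Q}_{v}|\mathcal{D}^{+}_{v}\boldsymbol{\varphi}_{\hindexp{i}{1}j})\Deltaxy$.

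The key structural observation is that $\textbf{Q}_{v}^{+}$ and $\textbf{Q}_{v}^{-}$ are diagonal with disjoint supports: for every quadrature index $\ell$, exactly one of $(\textbf{Q}_{v}^{+})_{\ell\ell}$ and $(\textbf{Q}_{v}^{-})_{\ell\ell}$ is nonzero (and equal to $\Omega_{v}^{\ell}$), while the other vanishes. Consequently, there is no cross term in the componentwise square, and
\begin{equation*}
    \bigl[(\textbf{Q}^{+}_{v}\mathcal{D}^{+}_{v} + \textbf{Q}^{-}_{v}\mathcal{D}^{-}_{v})\varphi_{\hindexp{i}{1}j,\ell}\bigr]^{2} = (\textbf{Q}^{+}_{v,\ell\ell})^{2}(\mathcal{D}^{+}_{v}\varphi_{\hindexp{i}{1}j,\ell})^{2} + (\textbf{Q}^{-}_{v,\ell\ell})^{2}(\mathcal{D}^{-}_{v}\varphi_{\hindexp{i}{1}j,\ell})^{2}.
\end{equation*}
Since $(\textbf{Q}_{v}^{\pm})_{\ell\ell}^{2} \leq (|\textbf{Q}_{v}|)_{\ell\ell}^{2}$, each term is bounded by $(|\textbf{Q}_{v}|)_{\ell\ell}^{2}(\mathcal{D}^{\pm}_{v}\varphi_{\hindexp{i}{1}j,\ell})^{2}$.

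Next I would use the shift invariance $\sum_{i}(\mathcal{D}^{-}_{v}\varphi_{\hindexp{i}{1}j,\ell})^{2} = \sum_{i}(\mathcal{D}^{+}_{v}\varphi_{\hindexp{i}{1}j,\ell})^{2}$, which follows by relabelling the index since $\mathcal{D}^{-}_{v}\varphi_{\hindexp{i}{1}j,\ell} = \mathcal{D}^{+}_{v}\varphi_{\hindexm{i}{1}j,\ell}$ under the periodic or zero Dirichlet boundary conditions already assumed throughout Section~\ref{section:EnergyStabNMM}. Summing over $ij$ and over $\ell$ with the quadrature weights $w_{\ell}=(\textbf{M}^{2})_{\ell\ell}$ then yields
\begin{equation*}
    \|\textbf{a}\|^{2} \leq \sum_{ij}\bigl(|\textbf{Q}_{v}|\mathcal{D}^{+}_{v}\boldsymbol{\varphi}_{\hindexp{i}{1}j}\bigr)^{\top}\textbf{M}^{2}\bigl(|\textbf{Q}_{v}|\mathcal{D}^{+}_{v}\boldsymbol{\varphi}_{\hindexp{i}{1}j}\bigr)\Deltaxy,
\end{equation*}
and combining with the Young step completes the proof.

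The main obstacle is the substitution of $\mathcal{D}^{-}_{v}$ by $\mathcal{D}^{+}_{v}$ on the component of the sum where $\Omega_{v}^{\ell}<0$. This requires the discrete shift identity to hold exactly, which is immediate for periodic or zero boundary data by an index shift; in other settings one would have to absorb boundary contributions. Since the energy-stability argument in Theorem~\ref{thm:StabNMM} operates under exactly those boundary hypotheses, the shift step is clean and the remainder of the proof is a standard Young-inequality decomposition.
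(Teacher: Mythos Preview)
Your proposal is correct and follows essentially the same route as the paper: apply Young's inequality in the weighted $\ell^{2}$ inner product, use that $\textbf{Q}_{v}^{+}\textbf{M}^{2}\textbf{Q}_{v}^{-}=0$ (disjoint diagonal supports) to drop the cross term, and then invoke the index-shift identity $\sum_{ij}(\mathcal{D}^{-}_{v}\boldsymbol{\varphi})^{2}=\sum_{ij}(\mathcal{D}^{+}_{v}\boldsymbol{\varphi})^{2}$ under periodic or zero boundaries to replace all backward differences by forward ones. One wording caution: the sentence ``each term is bounded by $(|\textbf{Q}_{v}|)_{\ell\ell}^{2}(\mathcal{D}^{\pm}_{v}\varphi)^{2}$'' is safe only because exactly one of the two summands vanishes for every $\ell$; if you had literally replaced both $(\textbf{Q}^{+}_{v})_{\ell\ell}^{2}$ and $(\textbf{Q}^{-}_{v})_{\ell\ell}^{2}$ by $(|\textbf{Q}_{v}|)_{\ell\ell}^{2}$ before the shift you would pick up a spurious factor of two, whereas the paper avoids this by shifting first and then using $(\textbf{Q}^{+}_{v})^{2}+(\textbf{Q}^{-}_{v})^{2}=|\textbf{Q}_{v}|^{2}$ exactly.
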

\begin{proof}
    Using Young's inequality for the left-hand side we get
    \begin{equation*}\label{eq:NMMLem1eq1}
    \begin{aligned}
        \left\vert \sum_{ij}\left[\textbf{M}(\textbf{Q}^{+}_{v}\mathcal{D}^{+}_{v}  \right.\right. &+ \left.\left. \textbf{Q}^{-}_{v}\mathcal{D}^{-}_{v})\boldsymbol{\varphi}_{\hindexp{i}{1}j} \right]^{\top}(\textbf{M}\boldsymbol{\psi}_{\hindexp{i}{1}j})\Deltaxy  \right\vert\\ &\leq\alpha\sum_{ij}(\boldsymbol{\psi}_{\hindexp{i}{1}j})^{\top}\textbf{M}^{2}\boldsymbol{\psi}_{\hindexp{i}{1}j}\Deltaxy\\ &\qquad+ \frac{1}{4\alpha}\sum_{ij}\left[\textbf{M}(\textbf{Q}^{+}_{v}\mathcal{D}^{+}_{v} + \textbf{Q}^{-}_{v}\mathcal{D}^{-}_{v})\boldsymbol{\varphi}_{\hindexp{i}{1}j} \right]^{2}\Deltaxy.
    \end{aligned}
    \end{equation*}
    Now consider the second term on the right-hand side of the above inequality,
    \begin{equation}\label{eq:NMMLem1eq2}
    \begin{aligned}
        \sum_{ij}\left[\textbf{M}(\textbf{Q}^{+}_{v}\mathcal{D}^{+}_{v} + \textbf{Q}^{-}_{v}\mathcal{D}^{-}_{v})\boldsymbol{\varphi}_{\hindexp{i}{1}j} \right]^{2}\Deltaxy  &= \sum_{ij}(\mathcal{D}^{+}_{v}\boldsymbol{\varphi}_{\hindexp{i}{1}j})^{\top}\textbf{Q}^{+}_{v}\textbf{M}^{2}\textbf{Q}^{+}_{v}(\mathcal{D}^{+}_{v}\boldsymbol{\varphi}_{\hindexp{i}{1}j})\Deltaxy \\&+ \sum_{ij}(\mathcal{D}^{-}_{v}\boldsymbol{\varphi}_{\hindexp{i}{1}j})^{\top}\textbf{Q}^{-}_{v}\textbf{M}^{2}\textbf{Q}^{-}_{v}(\mathcal{D}^{-}_{v}\boldsymbol{\varphi}_{\hindexp{i}{1}j})\Deltaxy\\
        &+ 2\sum_{ij}(\mathcal{D}^{+}_{v}\boldsymbol{\varphi}_{\hindexp{i}{1}j})^{\top}\textbf{Q}^{+}_{v}\textbf{M}^{2}\textbf{Q}^{-}_{v}(\mathcal{D}^{-}_{v}\boldsymbol{\varphi}_{\hindexp{i}{1}j})\Deltaxy.
    \end{aligned}
    \end{equation}
    We have that $\textbf{Q}^{+}_{v} = (\textbf{Q}_{v} + \abs{\textbf{Q}_{v}})/2$, $\textbf{Q}^{-}_{v} = (\textbf{Q}_{v} - \abs{\textbf{Q}_{v}})/2$ and since $\textbf{Q}_{v}$ and $\textbf{M}$ are diagonal matrices  $$ \textbf{Q}^{+}_{v}\textbf{M}^{2}\textbf{Q}^{-}_{v} = 0.$$
    Moreover,
    \begin{align*}
        \sum_{ij}(\mathcal{D}^{-}_{v}\boldsymbol{\varphi}_{\hindexp{i}{1}j})^{\top}\textbf{Q}^{-}_{v}\textbf{M}^{2}\textbf{Q}^{-}_{v}(\mathcal{D}^{-}_{v}\boldsymbol{\phi}_{\hindexp{i}{1}j})\Deltaxy 
        &= \sum_{ij}(\mathcal{D}^{+}_{v}\boldsymbol{\varphi}_{\hindexp{i}{1}j})^{\top}\textbf{Q}^{-}_{v}\textbf{M}^{2}\textbf{Q}^{-}_{v}(\mathcal{D}^{+}_{v}\boldsymbol{\varphi}_{\hindexp{i}{1}j})\Deltaxy.
    \end{align*}
    Thus by substituting $\textbf{Q}^{+}_{v}$ and $\textbf{Q}^{-}_{v}$ in \eqref{eq:NMMLem1eq2}we get
    \begin{align}
        \sum_{ij}\left[\textbf{M}(\textbf{Q}^{+}_{v}\mathcal{D}^{+}_{v} + \textbf{Q}^{-}_{v}\mathcal{D}^{-}_{v})\boldsymbol{\varphi}_{\hindexp{i}{1}j} \right]^{2}\Deltaxy &= \frac{1}{2}\sum_{ij}(\mathcal{D}^{+}_{v}\boldsymbol{\varphi}_{\hindexp{i}{1}j})^{\top}\textbf{Q}_{v}\textbf{M}^{2}\textbf{Q}_{v}(\mathcal{D}^{+}_{v}\boldsymbol{\varphi}_{\hindexp{i}{1}j})\Deltaxy\\
        & + \frac{1}{2}\sum_{ij}(\mathcal{D}^{+}_{v}\boldsymbol{\varphi}_{\hindexp{i}{1}j})^{\top}\abs{\textbf{Q}_{v}}\textbf{M}^{2}\abs{\textbf{Q}_{v}}(\mathcal{D}^{+}_{v}\boldsymbol{\varphi}_{\hindexp{i}{1}j})\Deltaxy\\
        &\leq \sum_{ij}(\mathcal{D}^{+}_{v}\boldsymbol{\varphi}_{\hindexp{i}{1}j})^{\top}\abs{\textbf{Q}_{v}}\textbf{M}^{2}\abs{\textbf{Q}_{v}}(\mathcal{D}^{+}_{v}\boldsymbol{\varphi}_{\hindexp{i}{1}j})\Deltaxy.
    \end{align}
\end{proof}

\begin{lemma}\label{lemma:AuxLemma3}
    For the advection operators $\mathcal{L}_{v}, v \in \mathcal{J}_{\textbf{\textit{x}}}$, $\boldsymbol{\psi}_{\hindexp{i}{1}j} = (\psi_{\hindexp{i}{1}j,1},\ldots,\psi_{\hindexp{i}{1}j,N_{q}})^{\top}\in\R^{N_{q}}$, $\boldsymbol{\psi}_{i\hindexp{j}{1}} = (\psi_{i\hindexp{j}{1},1},\ldots,\psi_{i\hindexp{j}{1},N_{q}})^{\top}\in\R^{N_{q}}$ the following result holds
    \begin{equation}\label{eq:AuxLem3eq1}
        \left[ \sum_{ij}\left(\boldsymbol{\psi}^{n+1}_{\hindexp{i}{1}j} \right)^{\top}\textbf{M}^{2}\mathcal{L}_{v}\boldsymbol{\psi}^{n}_{\hindexp{i}{1}j} + \sum_{ij}\left(\boldsymbol{\psi}^{n+1}_{i\hindexp{j}{1}} \right)^{\top}\textbf{M}^{2}\mathcal{L}_{v}\boldsymbol{\psi}^{n}_{i\hindexp{j}{1}} \right]\Deltaxy  = \mathcal{A}_{v} + \mathcal{B}_{v},
    \end{equation}
    where
    \begin{align*}
        \mathcal{A}_{v} &= \frac{\Delta v}{2}\left[ \sum_{ij}\left(\mathcal{D}^{+}_{v}\boldsymbol{\psi}^{n+1}_{\hindexp{i}{1}j} \right)^{\top}\textbf{M}^{2}\abs{\textbf{Q}_{v}}\mathcal{D}^{+}_{v}\boldsymbol{\psi}^{n+1}_{\hindexp{i}{1}j} + \sum_{ij}\left(\mathcal{D}^{+}_{v}\boldsymbol{\psi}^{n+1}_{i\hindexp{j}{1}} \right)^{\top}\textbf{M}^{2}\abs{\textbf{Q}_{v}}\mathcal{D}^{+}_{v}\boldsymbol{\psi}^{n+1}_{i\hindexp{j}{1}} \right]\Deltaxy,\\
        \mathcal{B}_{v} &= -\left[ \sum_{ij}\left( (\textbf{Q}_{v}^{+}\mathcal{D}^{+}_{v} + \textbf{Q}_{v}^{-}\mathcal{D}^{-}_{v}) \boldsymbol{\psi}^{n+1}_{\hindexp{i}{1}j}\right)^{\top}\textbf{M}^{2}\left( \boldsymbol{\psi}^{n}_{\hindexp{i}{1}j} - \boldsymbol{\psi}^{n+1}_{\hindexp{i}{1}j} \right)\right.\\
        &\left.\qquad\qquad + \sum_{ij}\left( (\textbf{Q}_{v}^{+}\mathcal{D}^{+}_{v} + \textbf{Q}_{v}^{-}\mathcal{D}^{-}_{v}) \boldsymbol{\psi}^{n+1}_{i\hindexp{j}{1}}\right)^{\top}\textbf{M}^{2}\left( \boldsymbol{\psi}^{n}_{i\hindexp{j}{1}} - \boldsymbol{\psi}^{n+1}_{i\hindexp{j}{1}} \right)   \right]\Deltaxy.
    \end{align*}
    Additionally, we have
    \begin{equation*}\normalfont
        \mathcal{B}_{v} \geq -\frac{\varepsilon}{4c\Delta t}\norm{\boldsymbol{\Psi}^{n} - \boldsymbol{\Psi}^{n+1}}^{2} - \frac{c\Delta t}{\varepsilon}\norm{\abs{\textbf{Q}_{v}}\mathcal{D}^{+}_{v}\boldsymbol{\Psi}^{n+1} }^{2},
    \end{equation*}
    where $\norm{\boldsymbol{\Psi}} = \sum_{ij}\sum_{\ell}\psi_{ij,\ell}w_{\ell}$.
\end{lemma}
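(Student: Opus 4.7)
The plan is to decompose $\boldsymbol{\psi}^{n}$ relative to $\boldsymbol{\psi}^{n+1}$ by writing $\boldsymbol{\psi}^{n} = \boldsymbol{\psi}^{n+1} + (\boldsymbol{\psi}^{n}-\boldsymbol{\psi}^{n+1})$. This splits the left-hand side of \eqref{eq:AuxLem3eq1} into an implicit piece with $\mathcal{L}_{v}$ acting on $\boldsymbol{\psi}^{n+1}$ and a time-increment piece with $\mathcal{L}_{v}$ acting on $\boldsymbol{\psi}^{n}-\boldsymbol{\psi}^{n+1}$. The implicit piece will produce $\mathcal{A}_{v}$, while the time-increment piece will, after one summation by parts, become exactly $\mathcal{B}_{v}$.

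For the implicit piece I would invoke Lemma~\ref{lemma:AuxLemma2} to rewrite $\mathcal{L}_{v}\boldsymbol{\psi}^{n+1}=\textbf{Q}_{v}\mathcal{D}^{c}_{v}\boldsymbol{\psi}^{n+1}-\tfrac{\Delta v}{2}\abs{\textbf{Q}_{v}}\mathcal{D}^{-}_{v}\mathcal{D}^{+}_{v}\boldsymbol{\psi}^{n+1}$. The central-difference contribution $\sum_{ij}(\boldsymbol{\psi}^{n+1})^{\top}\textbf{M}^{2}\textbf{Q}_{v}\mathcal{D}^{c}_{v}\boldsymbol{\psi}^{n+1}$ is antisymmetric under summation by parts (Lemma~\ref{lemma:IntegByPartsDz} applied with the symmetric matrix $\textbf{M}^{2}\textbf{Q}_{v}$ forces this bilinear form to equal its own negative once boundary terms vanish under periodic or zero boundary conditions), so it drops out. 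For the dissipation contribution I would apply Lemma~\ref{lemma:IntegByPartsDz} with the symmetric choice $\textbf{S}=\textbf{M}^{2}\abs{\textbf{Q}_{v}}$: one step transfers $\mathcal{D}^{-}_{v}$ onto the left factor, yielding $\tfrac{\Delta v}{2}\sum_{ij}(\mathcal{D}^{+}_{v}\boldsymbol{\psi}^{n+1})^{\top}\textbf{M}^{2}\abs{\textbf{Q}_{v}}\mathcal{D}^{+}_{v}\boldsymbol{\psi}^{n+1}$, which together with the corresponding computation at $(x_{i},y_{\hindexp{j}{1}})$ is precisely $\mathcal{A}_{v}$.

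For the time-increment piece I would apply Lemma~\ref{lemma:IntegByPartsDz} to each summand of $\mathcal{L}_{v}=\textbf{Q}_{v}^{-}\mathcal{D}^{+}_{v}+\textbf{Q}_{v}^{+}\mathcal{D}^{-}_{v}$. Transferring $\mathcal{D}^{+}_{v}$ onto $\boldsymbol{\psi}^{n+1}$ turns it into $-\mathcal{D}^{-}_{v}$ and vice versa, and since $\textbf{Q}_{v}^{\pm}$ are diagonal one may pull them through $\textbf{M}^{2}$. Collecting both contributions yields $-\sum_{ij}\bigl((\textbf{Q}_{v}^{+}\mathcal{D}^{+}_{v}+\textbf{Q}_{v}^{-}\mathcal{D}^{-}_{v})\boldsymbol{\psi}^{n+1}\bigr)^{\top}\textbf{M}^{2}(\boldsymbol{\psi}^{n}-\boldsymbol{\psi}^{n+1})$, which matches the definition of $\mathcal{B}_{v}$. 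The nontrivial bookkeeping here is that the pairing of sign projectors and difference operators in $\mathcal{B}_{v}$ is the \emph{opposite} of the pairing in $\mathcal{L}_{v}$; tracking this swap correctly is the step I expect to require the most care.

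Finally, for the lower bound on $\mathcal{B}_{v}$ I would apply Lemma~\ref{lemma:AuxLemma4} with $\boldsymbol{\varphi}=\boldsymbol{\psi}^{n+1}$ and $\boldsymbol{\psi}=\boldsymbol{\psi}^{n}-\boldsymbol{\psi}^{n+1}$ and the parameter choice $\alpha=\varepsilon/(4c\Delta t)$. This produces the two Young-type terms, and after absorbing the centered-and-interface staggered contributions into the single norm $\norm{\cdot}$ one arrives at $\mathcal{B}_{v}\geq -\tfrac{\varepsilon}{4c\Delta t}\norm{\boldsymbol{\Psi}^{n}-\boldsymbol{\Psi}^{n+1}}^{2}-\tfrac{c\Delta t}{\varepsilon}\norm{\abs{\textbf{Q}_{v}}\mathcal{D}^{+}_{v}\boldsymbol{\Psi}^{n+1}}^{2}$, as required. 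Beyond the sign bookkeeping mentioned above, the rest of the argument is a direct orchestration of the already-stated auxiliary lemmas and the standard summation-by-parts identities.
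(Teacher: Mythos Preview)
Your proposal is correct and follows essentially the same route as the paper: the same splitting $\boldsymbol{\psi}^{n}=\boldsymbol{\psi}^{n+1}+(\boldsymbol{\psi}^{n}-\boldsymbol{\psi}^{n+1})$, the same use of Lemma~\ref{lemma:AuxLemma2} plus summation by parts (Lemma~\ref{lemma:IntegByPartsDz}) to extract $\mathcal{A}_{v}$ from the implicit piece and $\mathcal{B}_{v}$ from the increment piece, and the same application of Lemma~\ref{lemma:AuxLemma4} with $\alpha=\varepsilon/(4c\Delta t)$ for the lower bound. The only cosmetic difference is that the paper shows the vanishing of the central-difference term by a direct index shift rather than phrasing it as antisymmetry, and your flagging of the $\pm$ swap in the pairing of $\textbf{Q}_{v}^{\pm}$ with $\mathcal{D}^{\pm}_{v}$ is exactly the bookkeeping point the paper handles.
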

\begin{proof}
     First, we re-write the first term on the left-hand side of \eqref{eq:AuxLem3eq1} as
    \begin{equation}\label{eq:AuxLem3eq2}
        \begin{split}
            \sum_{ij}\left(\boldsymbol{\psi}^{n+1}_{\hindexp{i}{1}j} \right)^{\top}\textbf{M}^{2}\mathcal{L}_{v}\boldsymbol{\psi}^{n}_{\hindexp{i}{1}j}\Deltaxy &= \sum_{ij}\left(\boldsymbol{\psi}^{n+1}_{\hindexp{i}{1}j} \right)^{\top}\textbf{M}^{2}\mathcal{L}_{v}\boldsymbol{\psi}^{n+1}_{\hindexp{i}{1}j}\Deltaxy\\&+ \sum_{ij}\left(\boldsymbol{\psi}^{n+1}_{\hindexp{i}{1}j} \right)^{\top}\textbf{M}^{2}\mathcal{L}_{v}\left(\boldsymbol{\psi}^{n}_{\hindexp{i}{1}j} - \boldsymbol{\psi}^{n+1}_{\hindexp{i}{1}j}\right)\Deltaxy.
        \end{split}
    \end{equation}
    Using \Cref{lemma:AuxLemma2}, the first term on the right-hand side of \eqref{eq:AuxLem3eq2} is given by
    \begin{align*}
        \sum_{ij}\left(\boldsymbol{\psi}^{n+1}_{\hindexp{i}{1}j} \right)^{\top}\textbf{M}^{2}\mathcal{L}_{v}\boldsymbol{\psi}^{n+1}_{\hindexp{i}{1}j}\Deltaxy &= \sum_{ij}\left(\boldsymbol{\psi}^{n+1}_{\hindexp{i}{1}j} \right)^{\top}\textbf{M}^{2}\textbf{Q}_{v}\mathcal{D}^{c}_{v}\boldsymbol{\psi}^{n+1}_{\hindexp{i}{1}j}\Deltaxy\\& - \frac{\Delta v}{2}\sum_{ij}\left(\boldsymbol{\psi}^{n+1}_{\hindexp{i}{1}j} \right)^{\top}\textbf{M}^{2}\abs{\textbf{Q}_{v}}\mathcal{D}^{-}_{v}\mathcal{D}^{+}_{v}\boldsymbol{\psi}^{n+1}_{\hindexp{i}{1}j}\Deltaxy\\
         &= \frac{\Delta v}{2}\sum_{ij}\left(\mathcal{D}^{+}_{v}\boldsymbol{\psi}^{n+1}_{\hindexp{i}{1}j} \right)^{\top}\textbf{M}^{2}\abs{\textbf{Q}_{v}}\mathcal{D}^{+}_{v}\boldsymbol{\psi}^{n+1}_{\hindexp{i}{1}j}\Deltaxy,
    \end{align*}
    where we use \Cref{lemma:IntegByPartsDz} and
    \begin{align*}
        \sum_{ij}\left(\boldsymbol{\psi}^{n+1}_{\hindexp{i}{1}j} \right)^{\top}\textbf{M}^{2}\textbf{Q}_{v}\mathcal{D}^{c}_{v}\boldsymbol{\psi}^{n+1}_{\hindexp{i}{1}j}\Deltaxy &= \frac{1}{\Delta x}\sum_{ij}\left(\boldsymbol{\psi}^{n+1}_{\hindexp{i}{1}j} \right)^{\top}\textbf{M}^{2}\textbf{Q}_{v}\boldsymbol{\psi}^{n+1}_{\hindexp{i}{3}j}\Deltaxy \\&- \frac{1}{\Delta x}\sum_{ij}\left(\boldsymbol{\psi}^{n+1}_{\hindexp{i}{1}j} \right)^{\top}\textbf{M}^{2}\textbf{Q}_{v}\boldsymbol{\psi}^{n+1}_{\hindexm{i}{1}j}\Deltaxy\\
        &\overset{\text{shifting index}}{=} 0.
    \end{align*}
    Using \Cref{lemma:IntegByPartsDz} for the second term on the right-hand side of \eqref{eq:AuxLem3eq2} we get 
    \begin{equation}\label{eq:AuxLem3eq3}
        \begin{split}
            \sum_{ij}\left(\boldsymbol{\psi}^{n+1}_{\hindexp{i}{1}j} \right)^{\top}\textbf{M}^{2}\mathcal{L}_{v}&\left(\boldsymbol{\psi}^{n}_{\hindexp{i}{1}j} - \boldsymbol{\psi}^{n+1}_{\hindexp{i}{1}j}\right)\Deltaxy\\ &= \sum_{ij}\left( (\textbf{Q}_{v}^{+}\mathcal{D}^{+}_{v} + \textbf{Q}_{v}^{-}\mathcal{D}^{-}_{v})\boldsymbol{\psi}^{n+1}_{\hindexp{i}{1}j}  \right)^{\top}\textbf{M}^{2}\left(\boldsymbol{\psi}^{n}_{\hindexp{i}{1}j} - \boldsymbol{\psi}^{n+1}_{\hindexp{i}{1}j}\right)\Deltaxy.
        \end{split}
    \end{equation}
    We get similar expressions at $(x_{i},y_{\hindexp{j}{1}})$. Combining with the expressions at $(x_{\hindexp{i}{1}},y_{j})$ yields the first part of the lemma. To prove the remaining lemma, we use \Cref{lemma:AuxLemma4} for the right-hand side of \eqref{eq:AuxLem3eq3} which gives
    \begin{equation}\label{eq:AuxLem3eq4}
        \begin{aligned}
             -\sum_{ij}\left( (\textbf{Q}_{v}^{+}\mathcal{D}^{+}_{v} \right. &+ \left. \textbf{Q}_{v}^{-}\mathcal{D}^{-}_{v})\boldsymbol{\psi}^{n+1}_{\hindexp{i}{1}j}  \right)^{\top}\textbf{M}^{2}\left(\boldsymbol{\psi}^{n}_{\hindexp{i}{1}j} - \boldsymbol{\psi}^{n+1}_{\hindexp{i}{1}j}\right)\Deltaxy\\&\geq - \alpha\sum_{ij}\left(\boldsymbol{\psi}^{n}_{\hindexp{i}{1}j} - \boldsymbol{\psi}^{n+1}_{\hindexp{i}{1}j}\right)^{\top}\textbf{M}^{2}\left(\boldsymbol{\psi}^{n}_{\hindexp{i}{1}j} - \boldsymbol{\psi}^{n+1}_{\hindexp{i}{1}j}\right)\Deltaxy \\&- \frac{1}{4\alpha}\sum_{ij}\left( \abs{\textbf{Q}_{v}}\mathcal{D}^{+}_{v}\boldsymbol{\psi}^{n+1}_{\hindexp{i}{1}j}\right)^{\top}\textbf{M}^{2}\left( \abs{\textbf{Q}_{v}}\mathcal{D}^{+}_{v}\boldsymbol{\psi}^{n+1}_{\hindexp{i}{1}j} \right)\Deltaxy.
        \end{aligned}
    \end{equation}
    Similarly, at $(x_{i},y_{\hindexp{j}{1}})$ we get
    \begin{equation}\label{eq:AuxLem3eq5}
        \begin{aligned}
             -\sum_{ij}\left( (\textbf{Q}_{v}^{+}\mathcal{D}^{+}_{v} \right. &+ \left. \textbf{Q}_{v}^{-}\mathcal{D}^{-}_{v})\boldsymbol{\psi}^{n+1}_{i\hindexp{j}{1}}  \right)^{\top}\textbf{M}^{2}\left(\boldsymbol{\psi}^{n}_{i\hindexp{j}{1}} - \boldsymbol{\psi}^{n+1}_{i\hindexp{j}{1}}\right)\Deltaxy \\ &\geq - \alpha\sum_{ij}\left(\boldsymbol{\psi}^{n}_{i\hindexp{j}{1}} - \boldsymbol{\psi}^{n+1}_{i\hindexp{j}{1}}\right)^{\top}\textbf{M}^{2}\left(\boldsymbol{\psi}^{n}_{i\hindexp{j}{1}} - \boldsymbol{\psi}^{n+1}_{i\hindexp{j}{1}}\right)\Deltaxy \\ &- \frac{1}{4\alpha}\sum_{ij}\left( \abs{\textbf{Q}_{v}}\mathcal{D}^{+}_{v}\boldsymbol{\psi}^{n+1}_{i\hindexp{j}{1}}\right)^{\top}\textbf{M}^{2}\left( \abs{\textbf{Q}_{v}}\mathcal{D}^{+}_{v}\boldsymbol{\psi}^{n+1}_{i\hindexp{j}{1}} \right)\Deltaxy.
        \end{aligned}
    \end{equation}
    Adding \eqref{eq:AuxLem3eq4} and \eqref{eq:AuxLem3eq5} and setting $\alpha = \frac{\varepsilon}{4c\Delta t}$ completes the proof of the lemma.
\end{proof}

\begin{lemma}\label{lemma:AuxLemma5}
    Given any $\boldsymbol{\varphi}\in\R^{N_{q}}$ the following inequality holds for $v \in \mathcal{J}_{\textbf{\textit{x}}}$:
    \begin{equation*}
        \boldsymbol{\varphi}^{\top}\textbf{Q}_{v}\textbf{\textit{w}}\textbf{\textit{w}}^{\top}\textbf{Q}_{v}\boldsymbol{\varphi} \leq \pi\boldsymbol{\varphi}^{\top}\abs{\textbf{Q}_{v}}\textbf{M}^{2}\boldsymbol{\varphi}.
    \end{equation*}
\end{lemma}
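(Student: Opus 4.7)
The plan is to recognize the statement as a direct Cauchy--Schwarz inequality, with the constant $\pi$ arising from a concrete quadrature integral over the projected unit sphere $P(\mathbb{S}^{2})$. Writing the inequality out componentwise, using that $\textbf{Q}_{v}$, $\abs{\textbf{Q}_{v}}$ and $\textbf{M}^{2}$ are all diagonal with entries $\Omega_{v}^{\ell}$, $\abs{\Omega_{v}^{\ell}}$, and $w_{\ell}$ respectively, the claim becomes
\begin{equation*}
    \Bigl(\sum_{\ell} w_{\ell}\,\Omega_{v}^{\ell}\,\varphi_{\ell}\Bigr)^{2} \;\leq\; \pi \sum_{\ell} w_{\ell}\,\abs{\Omega_{v}^{\ell}}\,\varphi_{\ell}^{2}.
\end{equation*}

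First I would split each summand in the left-hand side as
$w_{\ell}\Omega_{v}^{\ell}\varphi_{\ell} = \bigl(\sqrt{w_{\ell}\abs{\Omega_{v}^{\ell}}}\,\varphi_{\ell}\bigr)\cdot\bigl(\mathrm{sgn}(\Omega_{v}^{\ell})\sqrt{w_{\ell}\abs{\Omega_{v}^{\ell}}}\bigr)$,
and apply the Cauchy--Schwarz inequality, yielding
\begin{equation*}
    \Bigl(\sum_{\ell} w_{\ell}\,\Omega_{v}^{\ell}\,\varphi_{\ell}\Bigr)^{2} \leq \Bigl(\sum_{\ell} w_{\ell}\,\abs{\Omega_{v}^{\ell}}\,\varphi_{\ell}^{2}\Bigr)\Bigl(\sum_{\ell} w_{\ell}\,\abs{\Omega_{v}^{\ell}}\Bigr).
\end{equation*}
It then suffices to show that the second factor is at most $\pi$. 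Since the tensorized Gauss--Legendre rule is exact on a sufficiently rich class of integrands, $\sum_{\ell} w_{\ell}\abs{\Omega_{v}^{\ell}} = \int_{P(\mathbb{S}^{2})} \abs{\Omega_{v}}\, \mathrm{d}\Omega$, up to the quadrature accuracy assumed throughout the paper.

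Finally, I would evaluate this integral in the parametrization $\boldsymbol{\Omega} = (\sqrt{1-\mu^{2}}\sin\theta,\sqrt{1-\mu^{2}}\cos\theta)$, $\mu \in [0,1]$, $\theta\in[0,2\pi]$. Taking $v = x$ (the case $v = y$ is identical by symmetry) gives
\begin{equation*}
    \int_{P(\mathbb{S}^{2})} \abs{\Omega_{x}}\, \mathrm{d}\Omega = \int_{0}^{1}\!\int_{0}^{2\pi} \sqrt{1-\mu^{2}}\abs{\sin\theta}\, \mathrm{d}\theta\, \mathrm{d}\mu = \frac{\pi}{4}\cdot 4 = \pi,
\end{equation*}
which closes the argument. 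The only potential subtlety is that this identity holds exactly for the chosen quadrature; assuming the quadrature integrates such low-order trigonometric moments exactly (as is implicitly used elsewhere in the proof of Theorem~\ref{theorem:APFull} when computing $\textbf{\textit{w}}^{\top}\textbf{Q}_{v}\textbf{Q}_{v}\mathds{1}$), the bound $\sum_{\ell}w_{\ell}\abs{\Omega_{v}^{\ell}} \leq \pi$ follows, completing the proof.
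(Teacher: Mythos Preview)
Your proof is correct and is exactly the natural argument: Cauchy--Schwarz in the weighted $\ell^{2}$ inner product followed by evaluation of the constant $\sum_{\ell} w_{\ell}\abs{\Omega_{v}^{\ell}} = \int_{P(\mathbb{S}^{2})}\abs{\Omega_{v}}\,\mathrm{d}\Omega = \pi$. The paper states this lemma without proof, so there is nothing further to compare; your identification of the quadrature-exactness assumption is appropriate and consistent with how the paper treats analogous moment identities elsewhere (e.g.\ in the proof of Theorem~\ref{theorem:APFull}).
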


\begin{lemma}\label{lemma:AuxLemma6}
    The following holds for $\boldsymbol{\varphi}_{\hindexp{i}{1}j},\boldsymbol{\varphi}_{i\hindexp{j}{1}}\in\R^{N_{q}}$ and $v \in \mathcal{J}_{\textbf{\textit{x}}}$,
    \begin{equation*}
        \sum_{ij}(\mathcal{D}^{+}_{v}\boldsymbol{\varphi}_{\hindexp{i}{1}j})^{\top}\textbf{M}^{2}\abs{\textbf{Q}_{v}}(\mathcal{D}^{+}_{v}\boldsymbol{\varphi}_{\hindexp{i}{1}j}) \leq \frac{4}{\Delta v^{2}}\sum_{i}\boldsymbol{\varphi}_{\hindexp{i}{1}j}^{\top}\textbf{M}^{2}\abs{\textbf{Q}_{v}}\boldsymbol{\varphi}_{\hindexp{i}{1}j},
    \end{equation*}
    and
    \begin{equation*}
        \sum_{ij}(\mathcal{D}^{+}_{v}\boldsymbol{\varphi}_{i\hindexp{j}{1}})^{\top}\textbf{M}^{2}\abs{\textbf{Q}_{v}}(\mathcal{D}^{+}_{v}\boldsymbol{\varphi}_{i\hindexp{j}{1}}) \leq \frac{4}{\Delta v^{2}}\sum_{i}\boldsymbol{\varphi}_{i\hindexp{j}{1}}^{\top}\textbf{M}^{2}\abs{\textbf{Q}_{v}}\boldsymbol{\varphi}_{i\hindexp{j}{1}}.
    \end{equation*}
\end{lemma}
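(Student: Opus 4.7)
The plan is to reduce this inverse-type estimate to a pointwise algebraic bound followed by a shift of summation index. Set $\textbf{W} := \textbf{M}^{2}\abs{\textbf{Q}_{v}}$; since $\textbf{M}$ and $\abs{\textbf{Q}_{v}}$ are diagonal with nonnegative entries, $\textbf{W}$ is diagonal and positive semidefinite, so $\ba \mapsto \ba^{\top}\textbf{W}\ba$ is a genuine semi-norm. I will treat the case $v=x$; the case $v=y$ and the analogous estimate for $\boldsymbol{\varphi}_{i\hindexp{j}{1}}$ follow by the identical argument with the roles of the spatial indices interchanged.

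First, I would unfold the definition $\mathcal{D}^{+}_{v}\boldsymbol{\varphi}_{\hindexp{i}{1}j} = (\boldsymbol{\varphi}_{\hindexp{i}{3}j} - \boldsymbol{\varphi}_{\hindexp{i}{1}j})/\Delta v$, so that the left-hand side becomes
\begin{equation*}
\frac{1}{\Delta v^{2}}\sum_{ij}\left(\boldsymbol{\varphi}_{\hindexp{i}{3}j} - \boldsymbol{\varphi}_{\hindexp{i}{1}j}\right)^{\top}\textbf{W}\left(\boldsymbol{\varphi}_{\hindexp{i}{3}j} - \boldsymbol{\varphi}_{\hindexp{i}{1}j}\right).
\end{equation*}
Then I would apply the elementary positive-semidefinite bound $(\ba - \bv)^{\top}\textbf{W}(\ba - \bv) \leq 2\,\ba^{\top}\textbf{W}\ba + 2\,\bv^{\top}\textbf{W}\bv$, which follows either from expanding the square and estimating the cross term via Cauchy--Schwarz in the $\textbf{W}$-semi-inner-product or directly from the parallelogram identity $(\ba-\bv)^{\top}\textbf{W}(\ba-\bv) + (\ba+\bv)^{\top}\textbf{W}(\ba+\bv) = 2\ba^{\top}\textbf{W}\ba + 2\bv^{\top}\textbf{W}\bv$. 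This dominates the previous expression by
\begin{equation*}
\frac{2}{\Delta v^{2}}\sum_{ij}\left[\boldsymbol{\varphi}_{\hindexp{i}{3}j}^{\top}\textbf{W}\boldsymbol{\varphi}_{\hindexp{i}{3}j} + \boldsymbol{\varphi}_{\hindexp{i}{1}j}^{\top}\textbf{W}\boldsymbol{\varphi}_{\hindexp{i}{1}j}\right].
\end{equation*}

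The final step is a shift of index $i \mapsto i-1$ in the first summand: under periodic boundaries this is exact and the two sums coincide, while under the reflective or Dirichlet conventions used elsewhere in the scheme the ghost-cell contribution is either absent or bounded by an interior value, so the shifted sum is again dominated by the unshifted one. Combining, the right-hand side is bounded by $\frac{4}{\Delta v^{2}}\sum_{ij}\boldsymbol{\varphi}_{\hindexp{i}{1}j}^{\top}\textbf{W}\boldsymbol{\varphi}_{\hindexp{i}{1}j}$, which is the claim. The main obstacle, if any, is purely bookkeeping: ensuring that the index shift does not pick up uncontrolled boundary terms, and accepting the constant $4$ (rather than $2$) as the price paid for covering both endpoints after the shift. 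This constant is precisely what propagates into the parabolic part of the CFL condition \eqref{eq:CFLcond} through the proof of Theorem~\ref{thm:StabNMM}.
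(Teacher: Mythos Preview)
Your argument is correct and is the standard inverse-estimate proof: expand $\mathcal{D}^{+}_{v}$, apply $(\ba-\bv)^{\top}\textbf{W}(\ba-\bv)\leq 2\ba^{\top}\textbf{W}\ba+2\bv^{\top}\textbf{W}\bv$ for the positive semidefinite diagonal weight $\textbf{W}=\textbf{M}^{2}\abs{\textbf{Q}_{v}}$, and shift the index. The paper states this lemma without proof, so there is nothing to compare against; your approach is precisely the one any reader would supply, and your remark that the index shift relies on the periodic (or zero) boundary convention used throughout the energy-stability analysis is the right caveat.
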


\begin{lemma}\label{lemma:AuxLemma7}
    For $a,b > 0$ we have that the following inequality holds:
    \begin{displaymath}
        a^{4}(a-b) - \frac{1}{5}(a^{5} - b^{5}) \geq 0.
    \end{displaymath}
\end{lemma}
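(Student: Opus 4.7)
The plan is to recognize this as the tangent line inequality for a strictly convex function. First I would define $g(x) = x^5/5$, so that $g'(x) = x^4$ and $g''(x) = 4x^3 > 0$ on $(0,\infty)$. The claim rewrites as
\begin{displaymath}
    g'(a)(a-b) \;-\; \bigl(g(a) - g(b)\bigr) \;\geq\; 0,
\end{displaymath}
which, after rearrangement, is exactly the standard tangent line inequality $g(b) \geq g(a) + g'(a)(b-a)$ that holds for any convex $g$. Since $g$ is strictly convex on $(0,\infty)$ and $a,b>0$, the inequality follows, with equality iff $a=b$. This is a two-line argument and I would make it the primary proof.

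As a backup, or to make the bound quantitative, I would fall back on a direct factorization. Multiplying through by $5$, the inequality is equivalent to $4a^5 - 5a^4 b + b^5 \geq 0$. Viewing the left-hand side as a polynomial in $b$, note that both it and its $b$-derivative $5b^4 - 5a^4$ vanish at $b=a$, so $(b-a)^2$ divides it. Polynomial long division then yields
\begin{displaymath}
    4a^5 - 5a^4 b + b^5 \;=\; (b-a)^2\bigl(b^3 + 2ab^2 + 3a^2 b + 4a^3\bigr),
\end{displaymath}
and the cubic factor is manifestly positive for $a,b>0$, giving the claim.

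There is no real obstacle here: the convexity route is immediate, and the algebraic route reduces to verifying a polynomial identity whose only subtlety is correctly performing the long division after recognizing $b=a$ as a double root. I would present the convexity argument as the main proof and optionally include the factored form, since an explicit factorization may be useful elsewhere in the stability analysis if one ever needs a lower bound of the form $\frac{1}{5}(b-a)^2 \cdot (\text{cubic in }a,b)$.
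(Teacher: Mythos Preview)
Your proposal is correct. Both the convexity argument and the double-root factorization are valid and complete.

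The paper takes a different, more elementary route: it factors only a single $(a-b)$, writing
\[
a^{4}(a-b) - \tfrac{1}{5}(a^{5}-b^{5}) = \tfrac{1}{5}(a-b)\bigl(4a^{4} - a^{3}b - a^{2}b^{2} - ab^{3} - b^{4}\bigr),
\]
and then argues by contradiction in the case $a<b$: if the product were negative, one would need $4a^{4} > a^{3}b + a^{2}b^{2} + ab^{3} + b^{4}$, but each summand on the right already exceeds $a^{4}$. Your convexity reformulation is shorter and more conceptual, and your factorization $(b-a)^{2}(b^{3}+2ab^{2}+3a^{2}b+4a^{3})$ is strictly sharper: it extracts the double root at $b=a$, makes nonnegativity immediate without case analysis, and provides the quantitative lower bound you mention. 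The paper's version, by contrast, is self-contained without invoking convexity and avoids polynomial division, but it gains nothing beyond the bare inequality.
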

\begin{proof}
    For $a = b$ we get that the left-hand side of the inequality is zero and thus we consider the case $a\neq b$. Thus we have two cases $a<b$ or $a>b$ and without loss of generality we consider the case $a<b$ and prove the inequality by contradiction. So assume that the given inequality is not true and thus we have
    \begin{displaymath}
        (a-b)\cdot(4a^{4} - a^{3}b - a^{2}b^{2} - ab^{3} - b^{4}) < 0.
    \end{displaymath}
    Since $a<b$ we have $4a^{4} > a^{3}b + a^{2}b^{2} + ab^{3} + b^{4}$. However, we have
    \begin{displaymath}
        a^{3}b + a^{2}b^{2} + ab^{3} + b^{4} > 4a^{4},
    \end{displaymath}
    which is a contradiction. 
\end{proof}

Finally, we now give the proof of Theorem 2
\begin{proof}
    We denote the scalar flux by $\phi^{n}_{ij} = B^{n}_{ij} + \varepsilon^{2}h^{n}_{ij} $. Then, rearranging \eqref{eq:FDNMM1} yields
    \begin{equation}\label{eq:Thm2eq1}
        \frac{1}{\Delta t}\left[ \frac{1}{c}\phi^{n+1}_{ij} - \frac{1}{c}\phi^{n}_{ij} \right] + \frac{1}{2\pi}\textbf{\textit{w}}^{\top}\textbf{Q}_{x}\mathcal{D}^{0}_{x}\textbf{\textit{g}}^{n+1}_{ij} + \frac{1}{2\pi}\textbf{\textit{w}}^{\top}\textbf{Q}_{y}\mathcal{D}^{0}_{y}\textbf{\textit{g}}^{n+1}_{ij} = -\sigma^{a}_{ij}h^{n+1}_{ij}.
    \end{equation}
    Multiplying \eqref{eq:Thm2eq1} by $\left( \frac{1}{c}\phi^{n+1}_{ij} \right)\Deltaxy$ and summing over $i,j$ 
    \begin{equation}\label{eq:Thm2eq3}
        \begin{split}
               &\frac{1}{2\Delta t}\sum_{ij}\left[ \left(\frac{1}{c}\phi^{n+1}_{ij}\right)^{2} - \left(\frac{1}{c}\phi^{n}_{ij}\right)^{2} + \left(\frac{1}{c}\phi^{n+1}_{ij} - \frac{1}{c}\phi^{n}_{ij}\right)^{2} \right]\Deltaxy\\& + \frac{1}{2\pi}\sum_{ij}(\mathcal{H}^{1}_{x})^{n+1,n+1}_{ij}\Deltaxy + \frac{1}{2\pi}\sum_{ij}(\mathcal{H}^{1}_{y})^{n+1,n+1}_{ij}\Deltaxy = -\sum_{ij}(\mathcal{H}^{2})^{n+1,n+1}_{ij}\Deltaxy,
        \end{split}
    \end{equation}
    where 
    \begin{displaymath}
        \begin{aligned}
            (\mathcal{H}^{1}_{v})^{n+1,n+1}_{ij} &= \textbf{\textit{w}}^{\top}\textbf{Q}_{v}\mathcal{D}^{0}_{v}\textbf{\textit{g}}^{n+1}_{ij}\left(\frac{1}{c}\phi^{n+1}_{ij}\right), \quad v \in \mathcal{J}_{\textbf{\textit{x}}},\\
            (\mathcal{H}^{2})^{n+1,n+1}_{ij} &= \sigma^{a}_{ij}h^{n+1}_{ij}\left( \frac{1}{c}\phi^{n+1}_{ij} \right).
        \end{aligned}
    \end{displaymath}
    The double temporal index on $(\mathcal{H}^{1}_{v})_{ij}^{n+1,n+1} $ are for $\textbf{\textit{g}}^{n+1}$ and $\phi^{n+1}$, respectively. A similar equality is obtained at $ (x_{\hindexp{i}{1}}, y_{\hindexp{j}{1}}) $; adding this to \eqref{eq:Thm2eq3} and summing over $i,j$ yields

    \begin{equation}\label{eq:Thm2eq5}
        \frac{1}{2\Delta t}\left[ \norm{\frac{1}{c}\boldsymbol{\phi}^{n+1}}^{2} - \norm{\frac{1}{c}\boldsymbol{\phi}^{n}}^{2}  +\norm{\frac{1}{c}\boldsymbol{\phi}^{n+1} - \frac{1}{c}\boldsymbol{\phi}^{n}}^{2} \right] + \frac{1}{2\pi}\left( \mathcal{H}^{1} \right)^{n+1,n+1} = -\left( \mathcal{H}^{2} \right)^{n+1,n+1},
    \end{equation}
        where
    \begin{displaymath}
        \begin{aligned}
            \left( \mathcal{H}^{1}_{v} \right)^{n+1,n+1} &= \left[ \sum_{ij}\left( \mathcal{H}^{1}_{v} \right)^{n+1,n+1}_{ij} + \sum_{ij}\left( \mathcal{H}^{1}_{v} \right)^{n+1,n+1}_{\hindexp{i}{1}\hindexp{j}{1}} \right]\Deltaxy, \quad v \in \mathcal{J}_{\textbf{\textit{x}}},\\
            \left( \mathcal{H}^{1} \right)^{n+1,n+1} &= \left( \mathcal{H}^{1}_{x} \right)^{n+1,n+1} + \left( \mathcal{H}^{1}_{y} \right)^{n+1,n+1},\\
            \left( \mathcal{H}^{2} \right)^{n+1,n+1} &= \left[ \sum_{ij}\left( \mathcal{H}^{2} \right)^{n+1,n+1}_{ij} + \sum_{ij}\left( \mathcal{H}^{2} \right)^{n+1,n+1}_{\hindexp{i}{1}\hindexp{j}{1}} \right]\Deltaxy.
        \end{aligned}
    \end{displaymath}
    Similarly, multiplying \eqref{eq:FDNMM3} by $\left(\textbf{\textit{g}}^{n+1}_{\hindexp{i}{1}j} \right)^{\top}\textbf{M}^{2}\Deltaxy $ and the equivalent equation at $( x_{i},y_{\hindexp{j}{1}} )$ by $\left(\textbf{\textit{g}}^{n+1}_{i\hindexp{j}{1}} \right)^{\top}\textbf{M}^{2}\Deltaxy $, summing over $i,j$ and adding them yields
    \begin{equation}\label{eq:Thm2eq6}
        \begin{aligned}
            \frac{1}{2\Delta t}\left[ \frac{1}{c}\norm{\textbf{g}^{n+1}}^{2} - \frac{1}{c}\norm{\textbf{g}^{n}}^{2} + \frac{1}{c}\norm{\textbf{g}^{n+1} - \textbf{g}^{n}}^{2} \right]\hspace{3in}\\
            + \frac{1}{\varepsilon}\left[ \sum_{ij}\left(\textbf{\textit{g}}^{n+1}_{\hindexp{i}{1}j} \right)^{\top}\textbf{M}^{2}\left( \textbf{I} - \frac{1}{2\pi}\mathds{1}\textbf{\textit{w}}^{\top} \right)\mathcal{L}_{x}\textbf{\textit{g}}^{n+1}_{\hindexp{i}{1}j} + \sum_{ij}\left(\textbf{\textit{g}}^{n+1}_{i\hindexp{j}{1}} \right)^{\top}\textbf{M}^{2}\left( \textbf{I} - \frac{1}{2\pi}\mathds{1}\textbf{\textit{w}}^{\top} \right)\mathcal{L}_{x}\textbf{\textit{g}}^{n+1}_{i\hindexp{j}{1}} \right]\Deltaxy\\
            + \frac{1}{\varepsilon}\left[ \sum_{ij}\left(\textbf{\textit{g}}^{n+1}_{\hindexp{i}{1}j} \right)^{\top}\textbf{M}^{2}\left( \textbf{I} - \frac{1}{2\pi}\mathds{1}\textbf{\textit{w}}^{\top} \right)\mathcal{L}_{y}\textbf{\textit{g}}^{n+1}_{\hindexp{i}{1}j} + \sum_{ij}\left(\textbf{\textit{g}}^{n+1}_{i\hindexp{j}{1}} \right)^{\top}\textbf{M}^{2}\left( \textbf{I} - \frac{1}{2\pi}\mathds{1}\textbf{\textit{w}}^{\top} \right)\mathcal{L}_{y}\textbf{\textit{g}}^{n+1}_{i\hindexp{j}{1}} \right]\Deltaxy\\
            +\frac{c}{\varepsilon^{2}}\left(\mathcal{G}^{1} \right)^{n+1,n}
            = -\frac{1}{\varepsilon^{2}}\left[ \sum_{ij}\sigma^{t}_{\hindexp{i}{1}j}\left(\textbf{\textit{g}}^{n+1}_{\hindexp{i}{1}j} \right)^{\top}\textbf{M}^{2}\textbf{\textit{g}}^{n+1}_{\hindexp{i}{1}j} + \sum_{ij}\sigma^{t}_{i\hindexp{j}{1}}\left(\textbf{\textit{g}}^{n+1}_{i\hindexp{j}{1}} \right)^{\top}\textbf{M}^{2}\textbf{\textit{g}}^{n+1}_{i\hindexp{j}{1}} \right]\Deltaxy,
        \end{aligned}
    \end{equation}
    where 
    \begin{displaymath}
        \begin{aligned}
            \left(\mathcal{G}^{1}_{v} \right)^{n+1,n}_{\hindexp{i}{1}j} &= \left(\textbf{\textit{g}}^{n+1}_{\hindexp{i}{1}j} \right)^{\top}\textbf{M}^{2}\textbf{Q}_{v}\mathds{1}\delta^{0}_{v}\left(\frac{1}{c}\phi^{n}_{\hindexp{i}{1}j}\right),\quad v \in \mathcal{J}_{\textbf{\textit{x}}},\\
         \left(\mathcal{G}^{1}_{v} \right)^{n+1,n} &= \left[ \sum_{ij}\left(\mathcal{G}^{1}_{v} \right)^{n+1,n}_{\hindexp{i}{1}j} + \sum_{ij}\left(\mathcal{G}^{1}_{v} \right)^{n+1,n}_{i\hindexp{j}{1}}\right]\Deltaxy,\quad v \in \mathcal{J}_{\textbf{\textit{x}}},\\
         \left(\mathcal{G}^{1} \right)^{n+1,n} &= \left(\mathcal{G}^{1}_{x} \right)^{n+1,n} + \left(\mathcal{G}^{1}_{y} \right)^{n+1,n}.
        \end{aligned}
    \end{displaymath}
    Since $\sigma^{t}(\textbf{\textit{x}})\geq \sigma^{t}_{0} , \forall \textbf{\textit{x}} $, $\textbf{M}^{2}\mathds{1} = \textbf{\textit{w}} $, and $0 = \langle\textbf{\textit{g}}\rangle = \textbf{\textit{g}}^{\top}_{\hindexp{i}{1}j}\textbf{\textit{w}} = \textbf{\textit{g}}^{\top}_{i\hindexp{j}{1}}\textbf{\textit{w}} $, we get
    \begin{align*}
        -\frac{1}{\varepsilon^{2}}\left[ \sum_{ij}\sigma^{t}_{\hindexp{i}{1}j}\left(\textbf{\textit{g}}^{n+1}_{\hindexp{i}{1}j} \right)^{\top}\textbf{M}^{2}\textbf{\textit{g}}^{n+1}_{\hindexp{i}{1}j} + \sum_{ij}\sigma^{t}_{i\hindexp{j}{1}}\left(\textbf{\textit{g}}^{n+1}_{i\hindexp{j}{1}} \right)^{\top}\textbf{M}^{2}\textbf{\textit{g}}^{n+1}_{i\hindexp{j}{1}} \right]\Deltaxy &\leq -\frac{\sigma^{t}_{0}}{\varepsilon^{2}}\norm{\textbf{g}^{n+1}}^{2}\\
        \left(\textbf{\textit{g}}^{n+1}_{\hindexp{i}{1}j} \right)^{\top}\textbf{M}^{2}\mathds{1} = \left(\textbf{\textit{g}}^{n+1}_{\hindexp{i}{1}j} \right)^{\top}\textbf{\textit{w}} &= 0, \\
        \left(\textbf{\textit{g}}^{n+1}_{i\hindexp{j}{1}} \right)^{\top}\textbf{M}^{2}\mathds{1} = \left(\textbf{\textit{g}}^{n+1}_{i\hindexp{j}{1}} \right)^{\top}\textbf{\textit{w}} &= 0.
    \end{align*}
      Hence \eqref{eq:Thm2eq6} reduces to
    \begin{equation}\label{eq:Thm2eq7}
        \begin{split}
            \frac{1}{2\Delta t}\left[ \frac{1}{c}\norm{\textbf{g}^{n+1}}^{2} - \frac{1}{c}\norm{\textbf{g}^{n}}^{2} + \frac{1}{c}\norm{\textbf{g}^{n+1} - \textbf{g}^{n}}^{2} \right]
            + \frac{1}{\varepsilon}\left(\mathcal{G}^{2}\right)^{n+1,n+1}\\
            +\frac{c}{\varepsilon^{2}}\left(\mathcal{G}^{1} \right)^{n+1,n}\leq -\frac{\sigma^{t}_{0}}{\varepsilon^{2}}\norm{\textbf{g}^{n+1}}^{2},
        \end{split}
    \end{equation}
    where
    \begin{displaymath}
        \begin{aligned}
            \left(\mathcal{G}^{2}_{v}\right)^{n+1,n+1}_{\hindexp{i}{1}j} &= \left(\textbf{\textit{g}}^{n+1}_{\hindexp{i}{1}j} \right)^{\top}\textbf{M}^{2}\mathcal{L}_{v}\textbf{\textit{g}}^{n+1}_{\hindexp{i}{1}j}, \quad v \in \mathcal{J}_{\textbf{\textit{x}}},\\
            \left(\mathcal{G}^{2}_{v}\right)^{n+1,n+1} &= \left[ \sum_{ij}\left(\mathcal{G}^{2}_{v}\right)^{n+1,n+1}_{\hindexp{i}{1}j} + \sum_{ij}\left(\mathcal{G}^{2}_{v}\right)^{n+1,n+1}_{i\hindexp{j}{1}} \right]\Deltaxy,\quad v \in \mathcal{J}_{\textbf{\textit{x}}},\\
            \left(\mathcal{G}^{2}\right)^{n+1,n+1} &= \left(\mathcal{G}^{2}_{x}\right)^{n+1,n+1} + \left(\mathcal{G}^{2}_{y}\right)^{n+1,n+1}.
        \end{aligned}
    \end{displaymath}
    Adding, \eqref{eq:Thm2eq5} + $\frac{\varepsilon^{2}}{2\pi c}$\eqref{eq:Thm2eq7} yields 
    \begin{equation}\label{eq:Thm2eq8}
        \begin{aligned}
            \frac{1}{2\Delta t}\left[ \norm{\frac{1}{c}\boldsymbol{\phi}^{n+1}}^{2} \right.& + \left. \frac{1}{2\pi}\norm{\frac{\varepsilon}{c}\textbf{g}^{n+1}}^{2}- \norm{\frac{1}{c}\boldsymbol{\phi}^{n}}^{2} - \frac{1}{2\pi}\norm{\frac{\varepsilon}{c}\textbf{g}^{n}}^{2} +\right.\\ &\left. \norm{\frac{1}{c}\boldsymbol{\phi}^{n+1} - \frac{1}{c}\boldsymbol{\phi}^{n}}^{2} + \frac{1}{2\pi}\norm{\frac{\varepsilon}{c}\textbf{g}^{n+1} - \frac{\varepsilon}{c}\textbf{g}^{n}}^{2} \right]
            + \frac{\varepsilon}{2\pi c}\left( \mathcal{G}^{2} \right)^{n+1,n+1}\\
            &+ \frac{1}{2\pi}\left( \mathcal{H}^{1} \right)^{n+1,n+1}
            \leq  -\frac{\sigma^{t}_{0}}{2\pi c}\norm{\textbf{g}^{n+1}}^{2} -\left( \mathcal{H}^{2} \right)^{n+1,n+1}
            -\frac{1}{2\pi}\left( \mathcal{G}^{1} \right)^{n+1,n}.
        \end{aligned}
    \end{equation}
    For $\left( \mathcal{H}^{1}\right)^{n+1,n+1}$, $ v \in \mathcal{J}_{\textbf{\textit{x}}} $, the following relation holds
    \begin{equation*}
        \sum_{ij}\left( \mathcal{H}^{1}_{v} \right)^{n+1,n+1}_{ij} = \sum_{ij}\left(\mathcal{D}^{0}_{v}\textbf{\textit{g}}^{n+1}_{ij} \right)^{\top}\textbf{M}^{2}\textbf{Q}_{v}\mathds{1}\left(\frac{1}{c}\phi^{n+1}_{ij} \right) =: \sum_{ij}\left( \widetilde{\mathcal{H}}^{1}_{v} \right)^{n+1,n+1}_{ij}
    \end{equation*}
     Using \Cref{lemma:IntegByPartsD0} for $\left( \mathcal{G}^{1} \right)^{n+1,n}$
     \begin{displaymath}
        \sum_{ij}\left( \mathcal{G}^{1}_{v} \right)^{n+1,n}_{\hindexp{i}{1}j} = -\sum_{ij}\left(\mathcal{D}^{0}_{v}\textbf{\textit{g}}^{n+1}_{ij} \right)^{\top}\textbf{M}^{2}\textbf{Q}_{v}\mathds{1}\left( \frac{1}{c}\phi^{n}_{ij} \right) = -\sum_{ij}\left( \widetilde{\mathcal{H}}^{1}_{v} \right)^{n+1,n}_{ij}.
    \end{displaymath}
    Then, we get
    \begin{equation}\label{eq:Thm2eq9}
        \begin{aligned}
              \frac{1}{2\Delta t}&\left[ \norm{\frac{1}{c}\boldsymbol{\phi}^{n+1}}^{2} + \frac{1}{2\pi}\norm{\frac{\varepsilon}{c}\textbf{g}^{n+1}}^{2}- \norm{\frac{1}{c}\boldsymbol{\phi}^{n}}^{2} - \frac{1}{2\pi}\norm{\frac{\varepsilon}{c}\textbf{g}^{n}}^{2} +\right.\\ &\left. \norm{\frac{1}{c}\boldsymbol{\phi}^{n+1} - \frac{1}{c}\boldsymbol{\phi}^{n}}^{2} + \frac{1}{2\pi}\norm{\frac{\varepsilon}{c}\textbf{g}^{n+1} - \frac{\varepsilon}{c}\textbf{g}^{n}}^{2} \right]\\
            &+ \frac{\varepsilon}{2\pi c}\left( \mathcal{G}^{2} \right)^{n+1,n+1}
            + \frac{1}{2\pi}\left( \widetilde{\mathcal{H}}^{1} \right)^{n+1,n+1}
            \leq  -\frac{\sigma^{t}_{0}}{2\pi c}\norm{\textbf{g}^{n+1}}^{2}\\ &\hspace{2.5in}-\left( \mathcal{H}^{2} \right)^{n+1,n+1} +\frac{1}{2\pi}\left( \widetilde{\mathcal{H}}^{1} \right)^{n+1,n}.
        \end{aligned}    
    \end{equation}
    Subtracting $ \left( \widetilde{\mathcal{H}}^{1} \right)^{n+1,n+1} $ from $ \left( \widetilde{\mathcal{H}}^{1} \right)^{n+1,n} $ and using \Cref{lemma:AuxLemma1} we get
    \begin{displaymath}
    \begin{split}
        \frac{1}{2\pi}\left[ \left( \widetilde{\mathcal{H}}^{1} \right)^{n+1,n} - \left( \widetilde{\mathcal{H}}^{1} \right)^{n+1,n+1} \right]  &= \frac{\Delta t}{(2\pi)^{2}}\left(\mathcal{G}^{3} \right)^{n+1,n+1}\\ &+ \frac{1}{2\Delta t}\norm{\frac{1}{c}\phi^{n} - \frac{1}{c}\phi^{n+1}}^{2},
    \end{split}
    \end{displaymath}
    where
    \begin{displaymath}
    \begin{split}
        \left(\mathcal{G}^{3} \right)^{n+1,n+1} = \left[ \sum_{ij}\left[\left(\mathcal{D}^{0}_{x}\textbf{\textit{g}}^{n+1}_{ij} \right)^{\top}\textbf{M}^{2}\textbf{Q}_{x}\mathds{1} \right]^{2} + \sum_{ij}\left[\left(\mathcal{D}^{0}_{x}\textbf{\textit{g}}^{n+1}_{\hindexp{i}{1}\hindexp{j}{1}} \right)^{\top}\textbf{M}^{2}\textbf{Q}_{x}\mathds{1} \right]^{2}  \right]\Deltaxy \\
            +\left[ \sum_{ij}\left[\left(\mathcal{D}^{0}_{y}\textbf{\textit{g}}^{n+1}_{ij} \right)^{\top}\textbf{M}^{2}\textbf{Q}_{y}\mathds{1} \right]^{2} + \sum_{ij}\left[\left(\mathcal{D}^{0}_{y}\textbf{\textit{g}}^{n+1}_{\hindexp{i}{1}\hindexp{j}{1}} \right)^{\top}\textbf{M}^{2}\textbf{Q}_{y}\mathds{1} \right]^{2}  \right]\Deltaxy.
    \end{split}
    \end{displaymath}
     Using \Cref{lemma:AuxLemma5} and the definition of the differential operators $\mathcal{D}^{0}_{v}$, we get for the last two terms on the right-hand side
    \begin{equation}\label{eq:Thm2eq17}
        \begin{aligned}
            \frac{\Delta t}{(2\pi)^{2}}\left(\mathcal{G}^{3} \right)^{n+1,n+1} \leq \frac{\Delta t}{4\pi}\left(\widetilde{\mathcal{G}}^{3}\right)^{n+1,n+1}. 
        \end{aligned}
    \end{equation}
    where 
    \begin{align*}
           \left(\widetilde{\mathcal{G}}^{3}_{v}\right)^{n+1,n+1}_{{\hindexp{i}{1}j}} &= \left( \mathcal{D}^{+}_{v}\textbf{\textit{g}}^{n+1}_{\hindexp{i}{1}j} \right)^{\top}\abs{\textbf{Q}_{v}}\textbf{M}^{2}\mathcal{D}^{+}_{x}\textbf{\textit{g}}^{n+1}_{\hindexp{i}{1}j},\quad v \in \mathcal{J}_{\textbf{\textit{x}}},\\
            \left(\widetilde{\mathcal{G}}^{3}_{v}\right)^{n+1,n+1} &= \left[ \sum_{ij} \left(\widetilde{\mathcal{G}}^{3}_{v}\right)^{n+1,n+1}_{\hindexp{i}{1}j} + \sum_{ij} \left(\widetilde{\mathcal{G}}^{3}_{v}\right)^{n+1,n+1}_{i\hindexp{j}{1}} \right]\Deltaxy,\quad v \in \mathcal{J}_{\textbf{\textit{x}}},\\
         \left(\widetilde{\mathcal{G}}^{3}\right)^{n+1,n+1} &= \left(\widetilde{\mathcal{G}}^{3}_{x}\right)^{n+1,n+1} + \left(\widetilde{\mathcal{G}}^{3}_{y}\right)^{n+1,n+1}.
    \end{align*}
    Putting it all together we get
     \begin{equation}\label{eq:Thm2eq10}
        \begin{aligned}
              \frac{1}{2\Delta t}\left[ \norm{\frac{1}{c}\boldsymbol{\phi}^{n+1}}^{2} + \frac{1}{2\pi}\norm{\frac{\varepsilon}{c}\textbf{g}^{n+1}}^{2} - \norm{\frac{1}{c}\boldsymbol{\phi}^{n}}^{2} - \frac{1}{2\pi}\norm{\frac{\varepsilon}{c}\textbf{g}^{n}}^{2} + \frac{1}{2\pi}\norm{\frac{\varepsilon}{c}\textbf{g}^{n+1} - \frac{\varepsilon}{c}\textbf{g}^{n}}^{2} \right]\\
              +  \frac{\varepsilon}{2\pi c}\left( \mathcal{G}^{2} \right)^{n+1,n+1}
             \leq  -\frac{\sigma^{t}_{0}}{2\pi c}\norm{\textbf{g}^{n+1}}^{2} -\left( \mathcal{H}^{2} \right)^{n+1,n+1} +\frac{\Delta t}{4\pi}\left(\widetilde{\mathcal{G}}^{3}\right)^{n+1,n+1}.
        \end{aligned}    
    \end{equation}
    Multiplying \eqref{eq:FDNMM5} by $\left( \frac{1}{c}B^{n+1}_{ij} \right)\Deltaxy$ and its equivalent at $(x_{\hindexp{i}{1}},y_{\hindexp{j}{1}})$ by $\left( \frac{1}{c}B^{n+1}_{\hindexp{i}{1}\hindexp{j}{1}} \right)\Deltaxy$, respectively, and summing over $i,j$ yields
    \begin{displaymath}
        \frac{1}{\Delta t}\sum_{ij}\frac{a c_{\nu}}{(2\pi)^{2}}(T^{n+1}_{ij})^{4}\left( T^{n+1}_{ij} - T^{n}_{ij} \right)\Deltaxy = \sum_{ij}\frac{1}{c}B^{n+1}_{ij}\sigma^{a}_{ij}h^{n+1}_{ij}\Deltaxy,
    \end{displaymath}
    \begin{displaymath}
        \frac{1}{\Delta t}\sum_{ij}\frac{a c_{\nu}}{(2\pi)^{2}}(T^{n+1}_{\hindexp{i}{1}\hindexp{j}{1}})^{4}\left( T^{n+1}_{\hindexp{i}{1}\hindexp{j}{1}} - T^{n}_{\hindexp{i}{1}\hindexp{j}{1}} \right)\Deltaxy = \sum_{ij}\frac{1}{c}B^{n+1}_{\hindexp{i}{1}\hindexp{j}{1}}\sigma^{a}_{\hindexp{i}{1}\hindexp{j}{1}}h^{n+1}_{\hindexp{i}{1}\hindexp{j}{1}}\Deltaxy.
    \end{displaymath}
    Using \Cref{lemma:AuxLemma7} we get 
    \begin{equation}\label{eq:Thm2eq11}
        \frac{1}{\Delta t}\sum_{ij}\frac{a c_{\nu}}{5(2\pi)^{2}}\left( \left(T^{n+1}_{ij}\right)^{5} - \left(T^{n}_{ij}\right)^{5} \right)\Deltaxy \leq \sum_{ij}\frac{1}{c}B^{n+1}_{ij}\sigma^{a}_{ij}h^{n+1}_{ij}\Deltaxy,
    \end{equation}
    \begin{equation}\label{eq:Thm2eq12}
        \frac{1}{\Delta t}\sum_{ij}\frac{a c_{\nu}}{5(2\pi)^{2}}\left( \left(T^{n+1}_{\hindexp{i}{1}\hindexp{j}{1}}\right)^{5} - \left(T^{n}_{\hindexp{i}{1}\hindexp{j}{1}}\right)^{5} \right)\Deltaxy \leq \sum_{ij}\frac{1}{c}B^{n+1}_{\hindexp{i}{1}\hindexp{j}{1}}\sigma^{a}_{\hindexp{i}{1}\hindexp{j}{1}}h^{n+1}_{\hindexp{i}{1}\hindexp{j}{1}}\Deltaxy.
    \end{equation}
    Combining \eqref{eq:Thm2eq11} and \eqref{eq:Thm2eq12}, yields
    \begin{equation}\label{eq:Thm2eq13}
        \begin{aligned}
            \frac{1}{2\Delta t}\left[ \frac{2}{5}\norm{\frac{\sqrt{a c_{\nu}}}{2\pi}(T^{n+1})^{5/2}}^{2} - \frac{2}{5}\norm{\frac{\sqrt{a c_{\nu}}}{2\pi}(T^{n})^{5/2}}^{2} \right]\hspace{1in}\\ \leq \frac{1}{c}\left[\sum_{ij}B^{n+1}_{ij}\sigma^{a}_{ij}h^{n+1}_{ij} + \sum_{ij}B^{n+1}_{\hindexp{i}{1}\hindexp{j}{1}}\sigma^{a}_{\hindexp{i}{1}\hindexp{j}{1}}h^{n+1}_{\hindexp{i}{1}\hindexp{j}{1}} \right]\Deltaxy.
        \end{aligned}
    \end{equation}
    Since $\sigma^{a}(\textbf{\textit{x}}) \geq \sigma^{a}_{0} $
    \begin{equation*}
        -\left[\sum_{ij}\frac{\varepsilon^{2}}{c}h^{n+1}_{ij} \sigma^{a}_{ij}h^{n+1}_{ij} + \sum_{ij}\frac{\varepsilon^{2}}{c}h^{n+1}_{\hindexp{i}{1}\hindexp{j}{1}}\sigma^{a}_{\hindexp{i}{1}\hindexp{j}{1}}h^{n+1}_{\hindexp{i}{1}\hindexp{j}{1}} \right]\Deltaxy \leq -\frac{\sigma^{a}_{0}}{c}\norm{\varepsilon h^{n+1}}^{2}.
    \end{equation*}
    Thus, adding \eqref{eq:Thm2eq10} and \eqref{eq:Thm2eq13} yields
    \begin{equation}\label{eq:Thm2eq14}
        \begin{aligned}
              \frac{1}{2\Delta t}\left[ e^{n+1} - e^{n} + \frac{1}{2\pi}\norm{\frac{\varepsilon}{c}\textbf{g}^{n+1} - \frac{\varepsilon}{c}\textbf{g}^{n}}^{2} \right]
              + \frac{\varepsilon}{2\pi c}\left( \mathcal{G}^{2} \right)^{n+1,n+1}\\
             \leq  -\frac{\sigma^{t}_{0}}{2\pi c}\norm{\textbf{g}^{n+1}}^{2}-\frac{\sigma^{a}_{0}}{c}\norm{\varepsilon h^{n+1}}^{2}
             +\frac{\Delta t}{4\pi}\left(\widetilde{\mathcal{G}}^{3}\right)^{n+1,n+1}.
        \end{aligned}    
    \end{equation}
    Note that we bound $-\frac{\sigma^{a}_{0}}{c}\norm{\varepsilon h^{n+1}}^{2} \leq 0$ and use \Cref{lemma:AuxLemma3} for $\left( \mathcal{G}^{2} \right)^{n+1,n+1}$, on the left-hand side of \eqref{eq:Thm2eq14}, such that $\frac{1}{2\pi}\norm{\frac{\varepsilon}{c}\textbf{g}^{n+1} - \frac{\varepsilon}{c}\textbf{g}^{n}}^{2}$ gets canceled out. This yields the following inequality
     \begin{equation}\label{eq:Thm2eq15}
        \begin{aligned}
              \frac{1}{2\Delta t}(e^{n+1} - e^{n})
              +\frac{\varepsilon\Delta x}{4\pi c}\left(\widetilde{\mathcal{G}}^{3}_{x}\right)^{n+1,n+1}
             +\frac{\varepsilon\Delta y}{4\pi c}\left(\widetilde{\mathcal{G}}^{3}_{y}\right)^{n+1,n+1}
              - \frac{\Delta t}{2\pi}\norm{\abs{\textbf{Q}_{x}}\mathcal{D}^{+}_{x}\textbf{g}^{n+1} }^{2}\\ - 
            \frac{\Delta t}{2\pi}\norm{\abs{\textbf{Q}_{y}}\mathcal{D}^{+}_{y}\textbf{g}^{n+1} }^{2}
             \leq  -\frac{\sigma^{t}_{0}}{2\pi c}\norm{\textbf{g}^{n+1}}^{2}
             +\frac{\Delta t}{4\pi}\left(\widetilde{\mathcal{G}}^{3}\right)^{n+1,n+1}.
        \end{aligned}    
    \end{equation}
   For any $\boldsymbol{\phi}\in\R^{N_{q}}$, since $\abs{\Omega_{v}}\leq 1 $, we have $ \boldsymbol{\phi}^{\top}\abs{\textbf{Q}_{v}}^{\top}\textbf{M}^{2}\abs{\textbf{Q}_{v}}\boldsymbol{\phi} \leq \boldsymbol{\phi}^{\top}\abs{\textbf{Q}_{v}}^{\top}\textbf{M}^{2}\boldsymbol{\phi} $. Hence we obtain
   \begin{equation}\label{eq:Thm2eq16}
        \norm{\abs{\textbf{Q}_{v}}\mathcal{D}^{+}_{v}\textbf{g}^{n+1} }^{2} \leq \left(\widetilde{\mathcal{G}}^{3}_{v}\right)^{n+1,n+1}.
   \end{equation}
    Combining \eqref{eq:Thm2eq16} and \eqref{eq:Thm2eq17} yields
    \begin{displaymath}
        \begin{aligned}
            \frac{1}{2\Delta t}(e^{n+1} - e^{n}) &\leq -\frac{\sigma^{t}_{0}}{2\pi c}\norm{\textbf{\textit{g}}^{n+1}}^{2}
            +\left(\frac{3\Delta t}{4\pi} - \frac{\varepsilon\Delta x}{4\pi c} \right)\left(\widetilde{\mathcal{G}}^{3}_{x}\right)^{n+1,n+1}\\
            &+ \left(\frac{3\Delta t}{4\pi} - \frac{\varepsilon\Delta y}{4\pi c} \right)\left(\widetilde{\mathcal{G}}^{3}_{y}\right)^{n+1,n+1}.
        \end{aligned}
    \end{displaymath}
    Using \Cref{lemma:AuxLemma6} and expanding the sum over the quadrature we get 
    \begin{equation*}
        \begin{aligned}
            \frac{1}{2\Delta t}(e^{n+1} - e^{n}) &\leq -\frac{\sigma^{t}_{0}}{2\pi c}\sum_{\ell}w_{\ell}\left[ \sum_{ij} g_{\ell,\hindexp{i}{1}j}^{2} + \sum_{ij} g_{\ell,i\hindexp{j}{1}}^{2} \right]\Deltaxy\\
            &\quad+\left(\frac{3\Delta t}{4\pi} - \frac{\varepsilon\Delta x}{4\pi c} \right)\frac{4}{\Delta x^{2}}\sum_{\ell}w_{\ell}\abs{\Omega_{x}^{\ell}}\left[ \sum_{ij} g_{\ell,\hindexp{i}{1}j}^{2} + \sum_{ij} g_{\ell,i\hindexp{j}{1}}^{2} \right]\Deltaxy \\
            &\quad+ \left(\frac{3\Delta t}{4\pi} - \frac{\varepsilon\Delta y}{4\pi c} \right)\frac{4}{\Delta y^{2}}\sum_{\ell}w_{\ell}\abs{\Omega_{y}^{\ell}}\left[ \sum_{ij} g_{\ell,\hindexp{i}{1}j}^{2} + \sum_{ij} g_{\ell,i\hindexp{j}{1}}^{2} \right]\Deltaxy,
        \end{aligned}
    \end{equation*}
    which can be written as 
    \begin{equation*}
        \begin{aligned}
            \frac{1}{2\Delta t}(e^{n+1} - e^{n}) &\leq \sum_{\ell}w_{\ell}\left[\left(\frac{3\Delta t}{4\pi} - \frac{\varepsilon\Delta x}{4\pi c} \right)\frac{4\abs{\Omega_{x}^{\ell}}}{\Delta x^{2}}  -\frac{\sigma^{t}_{0}}{4\pi c} \right]\left[ \sum_{ij} g_{\ell,\hindexp{i}{1}j}^{2} + \sum_{ij} g_{\ell,i\hindexp{j}{1}}^{2} \right]\Deltaxy \\
            &\quad+ \sum_{\ell}w_{\ell}\left[\left(\frac{3\Delta t}{4\pi} - \frac{\varepsilon\Delta y}{4\pi \vert c} \right)\frac{4\abs{\Omega_{y}^{\ell}}}{\Delta y^{2}}  -\frac{\sigma^{t}_{0}}{4\pi c} \right]\left[ \sum_{ij} g_{\ell,\hindexp{i}{1}j}^{2} + \sum_{ij} g_{\ell,i\hindexp{j}{1}}^{2} \right]\Deltaxy.
        \end{aligned}
    \end{equation*}
    Since the step size $\Delta t$ satisfies the CFL condition
    \begin{equation*}
        \Delta t \leq \frac{1}{3c}\text{min}\left\{ \varepsilon\Delta x + \frac{\sigma^{t}_{0}\Delta x^{2}}{4\abs{\Omega_{x}^{\ell}}}, \varepsilon\Delta y + \frac{\sigma^{t}_{0}\Delta y^{2}}{4\abs{\Omega_{y}^{\ell}}} \right\},
    \end{equation*}
     for $\abs{\Omega_{x}^{\ell}},\abs{\Omega_{y}^{\ell}}\neq 0$. We get $e^{n+1}\leq e^{n}$.
\end{proof}


\end{appendices}

\end{document}